\newcommand{\bG}{{\mathbf{G}}}
\newcommand\bg[1]{\mathbf{#1}}
\newcommand{\out}{{\operatorname{Out}}}
\newcommand{\PGL}{{\operatorname{PGL}}}
\newcommand{\SL}{{\operatorname{SL}}}
\newcommand{\Sp}{{\operatorname{Sp}}}
\newcommand{\SO}{{\operatorname{SO}}}
\newcommand{\GO}{{\operatorname{GO}}}
\newcommand{\PSL}{{\operatorname{PSL}}}
\newcommand{\PSU}{{\operatorname{PSU}}}
\newcommand{\Syl}{{\operatorname{Syl}}}
\newcommand{\tw}[1]{{}^#1\!}
\def\syl#1#2{{\rm Syl}_#1(#2)}
\def\nor{\triangleleft\,}
\def\oh#1#2{{\bf O}_{#1}(#2)}
\def\Oh#1#2{{\bf O}^{#1}(#2)}
\def\zent#1{{\bf Z}(#1)}
\def\irr#1{{\rm Irr}(#1)}
\def\norm#1#2{{\bf N}_{#1}(#2)}
\def\cent#1#2{{\bf C}_{#1}(#2)}
\def\irrp#1#2{{\rm Irr}_{#1'}(#2)}
\def\sbs{\subseteq}
\newtheorem{thm}{Theorem}[section]
\newtheorem{lem}[thm]{Lemma}
\newtheorem{prop}[thm]{Proposition}
\newtheorem*{thm*}{Theorem $\heartsuit$}
\newtheorem{thml}{Theorem}
\theoremstyle{remark}
\newcommand\wt[1]{\widetilde{#1}}
\newcommand{\GL}{\operatorname{GL}}
\newcommand{\PSp}{\operatorname{PSp}}
\newcommand\type[1]{\operatorname{#1}}
\newcommand\cyc[1]{\sf{#1}}
\newcommand{\aut}{\operatorname{Aut}}
\begin{document}

\title[Principal blocks with 5 irreducible characters]{Principal blocks with 5 irreducible characters}

%\author{Gabriel Navarro}
%\address{Departament de Matem\`atiques, Universitat de Val\`encia, 46100 Burjassot,
%       Val\`encia, Spain.}
%\email{gabriel@uv.es}

\author{Noelia Rizo}
\address[N. Rizo]{Departamento de Matem\'aticas, Universidad del País Vasco UPV/EHU}
\email{noelia.rizo@ehu.eus}

\author{A. A. Schaeffer Fry}
\address[A. A. Schaeffer Fry]{Dept. Mathematics and Statistics, MSU Denver, Denver, CO 80217, USA}
\email{aschaef6@msudenver.edu}

\author{Carolina Vallejo}
\address[C. Vallejo]{Departamento de Matem\'aticas, Edificio Sabatini, Universidad Carlos III de Madrid,
Av. Universidad 30, 28911, Legan\'es. Madrid, Spain}
\email{carolina.vallejo@uc3m.es}

\thanks{The first and third authors are  partially supported by the
Spanish Ministerio de Ciencia
e Innovaci\'on
PID2019-103854GB-I00 and FEDER funds. The first author also acknowledges support by ``Convocatoria de contratación para la especialización de personal investigador doctor en la UPV/EHU (2019)''. The second author is partially supported by the National Science Foundation under Grant No. DMS-1801156.
The third author also acknowledges support by the Spanish Ministerio de Ciencia
e Innovaci\'on through SEV-2015-0554
and 
MTM2017-82690-P}

\keywords{}

\subjclass[2010]{20C20, 20C15}

\begin{abstract} We show that if the principal $p$-block of a finite group $G$ contains exactly 5 irreducible ordinary characters, then a Sylow $p$-subgroup
of $G$ has order 5, 7 or is isomorphic to one of the non-abelian 2-groups of order 8. 
 \end{abstract}

\maketitle

\section*{Introduction}

\noindent 
The problem of classifying finite groups $G$ depending on the number $k(G)$ of irreducible ordinary characters of $G$ goes back to Burnside \cite{Bur55}, and a complete classification up to $k(G)=14$ has been achieved in \cite{SV07}. On the other hand, in Modular Representation Theory, very little is known about the analogous number $k(B)$ of irreducible ordinary characters belonging to a Brauer $p$-block $B$ of $G$. 
Brauer's celebrated $k(B)$-conjecture asserts that the number of irreducible characters in a $p$-block $B$ is bounded from above by the order of a defect group $D$ of $B$. This conjecture remains open (and even without a reduction theorem to finite simple groups) at the present time. In this paper, we are concerned with the related problem of classifying the defect groups of $p$-blocks $B$ with a small number $k(B)$, which already appears to be difficult. 

It is well-known that $k(B)=1$ if, and only if, $D$ is trivial. It is also true that $k(B)=2$ if, and only if, $D\cong {\sf C}_2$ (\cite{Bra}).
K\"ulshammer, Navarro, Sambale, and Tiep conjecture that $k(B)=3$ if, and only if, $D\cong {\sf C}_3$ in \cite{KNST}. They prove that their conjecture holds for a $p$-block $B$ if
that block satisfies the statement of the  
Alperin-McKay conjecture in \cite[Theorem 4.2]{KNST}. The case where $l(B)=1$, that is, the $p$-block $B$ has a unique simple module, was proved in \cite{Kul84}. In general, this conjecture is still open.

More can be said if we restrict ourselves to the case of principal $p$-blocks. In the following, we denote by $B_0(G)$ (or simply $B_0$ if the context is clear) the principal $p$-block of $G$, so that $D \in \syl p G$.
For example, Belonogov showed that $k(B_0)=3$ if, and only if, $D\cong {\sf C}_3$ in \cite{Bel90}. This paper is not easily available. Recently, Koshitani and Sakurai have provided in \cite{KS20} an alternative proof of this result and we will rely on their proof throughout this article.
Moreover, Koshitani and Sakurai \cite{KS20} have shown
that $k(B_0)=4$ implies that $D \in \{  {\sf C}_2\times {\sf C}_2, {\sf C}_4, {\sf C}_5\}$. In this note, we go one step further and
analyze the isomorphism classes of Sylow $p$-subgroups of groups $G$ for which $B_0(G)$ has exactly 5 irreducible characters. 

\begin{thml}\label{thmA} Let $G$ be a finite group, $p$ be a prime, and $P\in \syl p G$. Denote by $B_0$ the principal $p$-block of $G$. If $k(B_0)=5$,  then $P\in \{ {\sf C}_5, {\sf C}_7, {\sf D}_8, {\sf Q}_8 \}$. 
\end{thml}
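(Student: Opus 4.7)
The plan follows the blueprint of Koshitani--Sakurai~\cite{KS20}, who settled the analogous statement for $k(B_0)=4$; the goal is to push their argument one step further. As a standard preliminary, inflation along $G\to G/\oh{p'}{G}$ identifies $B_0(G/\oh{p'}{G})$ with $B_0(G)$ and preserves both $k(B_0)$ and the Sylow $p$-subgroup, so I may assume $\oh{p'}{G}=1$ throughout.

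Next I would split into the $p$-solvable and non-$p$-solvable cases. For $p$-solvable $G$ with $\oh{p'}{G}=1$, Fong's theorem forces $B_0(G)$ to be the unique $p$-block of $G$, so $k(G)=k(B_0(G))=5$. The classical classification of finite groups with exactly five conjugacy classes gives the explicit list $\{{\sf C}_5, {\sf D}_8, {\sf Q}_8, {\sf D}_{14}, {\sf F}_{20}, {\sf F}_{21}, {\sf S}_4, {\sf A}_5\}$; restricting to the $p$-solvable entries and enforcing $\oh{p'}{G}=1$ for some prime dividing $|G|$ leaves candidates whose Sylow $p$-subgroups are exactly the four groups listed in Theorem~A.

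For non-$p$-solvable $G$, I would let $N$ be a minimal nonsolvable normal subgroup, so that $N\cong S^t$ for some nonabelian simple group $S$. Since $B_0(G)$ covers $B_0(N)$ and $\Irr(B_0(N))$ identifies with $\Irr(B_0(S))^t$ under the tensor decomposition of blocks of direct products, standard Clifford and block-theoretic arguments combined with $k(B_0(S))\geq 2$ force $t=1$ and $k(B_0(S))\leq 5$. CFSG now enters: I would enumerate those pairs $(S,p)$ with $S$ nonabelian simple and $k(B_0(S))\leq 5$. Alternating and sporadic groups can be dispatched by direct inspection of character tables. For Lie-type groups, one bounds $k(B_0(S))$ from below using Deligne--Lusztig theory and the $d$-Harish-Chandra machinery of Brou\'e--Malle--Michel, leaving a short explicit list dominated by low-rank $\PSL_n(q)$ and $\PSU_n(q)$ at small primes. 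For each surviving pair $(S,p)$, I would run through all almost simple and, when relevant, central extensions $G$ of $S$ with $\oh{p'}{G}=1$, compute $k(B_0(G))$, and verify that $k(B_0(G))=5$ forces $P\in\{{\sf C}_5,{\sf C}_7,{\sf D}_8,{\sf Q}_8\}$; the groups ${\sf A}_5$ at $p=5$ and $\PSL_2(7)$ at $p=7$ realize the only two non-$p$-solvable examples.

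The principal obstacle is this CFSG step: obtaining uniform lower bounds on $k(B_0(S))$ for small Lie-type groups at all relevant primes, and then carefully tracking how outer (diagonal, field, graph) automorphisms and central extensions affect $k(B_0)$, will require substantial case analysis. Once the finite list of candidates is in hand, the remaining verifications at each individual almost simple group are mechanical.
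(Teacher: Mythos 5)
Your overall blueprint — kill $\oh{p'}{G}$, split into the $p$-solvable and non-$p$-solvable cases, reduce to (nearly) simple groups and invoke CFSG — is the same as the paper's, but there are several concrete problems with the details as you have stated them.

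First, in the $p$-solvable case, once $\oh{p'}{G}=1$ forces $k(G)=5$, the relevant list from Vera L\'opez--Vera L\'opez must be intersected with $p$-solvability, and ${\sf A}_5$ does not survive that intersection for any prime dividing its order. This is a small slip but it matters: the non-$p$-solvable branch must carry ${\sf A}_5$-type socles. Also, ${\sf A}_5$ at $p=5$ has $k(B_0)=4$, not $5$ (the degree-$5$ character has defect zero), so it is not one of the examples you claim.

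Second, taking ``a minimal nonsolvable normal subgroup $N$'' and asserting $N\cong S^t$ is not justified: a subgroup minimal among nonsolvable normal subgroups can contain a nontrivial solvable normal part. The paper instead works through $\zent{G}=1$, shows via Brauer's formula for $k(B_0)$ that there is a unique minimal normal subgroup, and only then shows it is semisimple (Steps 4--6 of Theorem \ref{thmB}). Some version of that chain of reductions is genuinely needed.

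Third, and most seriously, the proposed CFSG enumeration target is wrong. You plan to list pairs $(S,p)$ with $k(B_0(S))\leq 5$. But from $k(B_0(G))=5$ with $G$ almost simple and socle $S$, one only gets that $B_0(S)$ has at most $5$ orbits under $G$, i.e.\ $k_G(B_0(S))\leq 5$; the raw count $k(B_0(S))$ need not be small. Even in the favorable case $p\nmid|G:S|$, the Alperin--Dade isomorphism of blocks plus the covering analysis of Lemma \ref{tech} only yields $k(B_0(S))\in\{4,5,7,11,13\}$, and in the case $p\mid |G:S|$ the quantity $k(B_0(S))$ is bounded only by $1+|G/S|$, which can be arbitrarily large as $q$ grows. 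Concretely, $S={\sf A}_6$ at $p=3$ has $k(B_0(S))=6$ but only $4$ $\aut(S)$-orbits, so your enumeration would miss it. What actually must be bounded is $k_{\aut(S)}(B_0(S))$, the number of $\aut(S)$-orbits of characters in $B_0(S)$, and the paper's Section \ref{sec:lie} is devoted precisely to showing $k_A(B_0(S))\geq 6$ for Lie-type $S$ (or handling cyclic Sylows separately); the orbit structure, not the raw character count, is the correct object. Your proposal acknowledges that automorphisms ``affect $k(B_0)$'' but does not incorporate them into the enumeration criterion, which is where the argument would break down.
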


Notice that all the groups described in of Theorem \ref{thmA} occur as Sylow $p$-subgroups of principal $p$-blocks with 5 irreducible characters. For example, $G={\sf C}_5$ and $p=5$ yields the case $P={\sf C}_5$, $G={\sf D}_{14}$ and $p=7$ yields the case $P={\sf C}_7$,  $G={\sf D}_{8}$ and $p=2$ yields the case $P={\sf D}_8$, and  $G={\sf Q}_{8}$ and $p=2$ yields the case $P={\sf Q}_8$. 

\smallskip

We remark that for general $p$-blocks $B$ with $k(B)=5$ and $l(B)=1$,  Chlebowitz and K\"ulshammer proved that $D \in \{ {\sf C}_5, {\sf D}_8, {\sf Q}_8\}$ in \cite{CB92}.

\smallskip

Our proof of Theorem \ref{thmA} is based on the analysis of the structure of a counterexample $G$ of minimal order. We first prove that such a $G$ must be almost simple in Theorem \ref{thmB}.
Our methods for proving Theorem \ref{thmB} can also be used to provide independent reductions to simple and, respectively, almost simple groups of the corresponding statements for $k(B_0)=3$ \cite[Theorem 3.1]{KS20} and $k(B_0)=4$ \cite[Theorem 5.1 and 6.1]{KS20}.  As explained by the authors in \cite{KS20}, their reductions depend on the study of the structure of Sylow $p$-subgroups of finite groups having exactly 2 $G$-conjugacy classes of $p$-elements carried out in \cite{KNST}. The main difference is that our methods would remove the dependence on that result. %Notice that neither the statement nor our proof of Theorem A distinguishes between the different number of simple modules possessed by the principal block $B_0$ of $G$.

With Theorem \ref{thmB} in place and by using the Classification of Finite Simple Groups, we are left to study the principal $p$-blocks of finite almost simple groups whose socle is a 
group of Lie type.  This is done in Section \ref{sec:lie}. %(see Theorem \ref{thmB} below). 

%In Section 3, we complete the proof of Theorem \ref{thmA} by studying the case of these almost simple groups.

\medskip

We finish this introduction by mentioned that, as noticed by one of the referees, the statement of Theorem A follows from the principal block case of the Alperin-McKay conjecture. In particular, we can see Theorem A as new evidence in support of this long-standing conjecture.

\section{Preliminaries and background results}

\noindent Our notation for characters and $p$-blocks follows \cite{nbook}. For a fixed prime $p$, we refer to $p$-blocks just as blocks. In this section we collect some results on blocks that we will use later on, as well as
background results on blocks with a small number of irreducible characters.

We start by recalling some facts about {\em covering blocks}. For a definition and first properties of covering blocks we refer the reader to \cite[Chapter 9]{nbook}. Recall that if $N$ is a normal subgroup of $G$ and $B$ and $b$ are blocks of $G$ and $N$ respectively, then $B$ covers $b$ if there are $\chi\in{\rm Irr}(B)$ and $\theta\in{\rm Irr}(b)$ such that $\theta$ is an irreducible constituent of the restriction $\chi_N$ (see \cite[Theorem 9.2]{nbook}). Then it is clear that $B_0(G)$ covers $B_0(N)$.

\begin{lem}\label{lem:blockabove}
Let $N \nor G$.  Then for every $\theta\in\irr{B_0(N)}$, there exists $\chi\in\irr{B_0(G)\mid \theta}$.
\end{lem}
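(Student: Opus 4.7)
\emph{Plan.} My approach breaks into two steps. First, I would establish that $B_0(G)$ covers $B_0(N)$. This is a well-known observation: the trivial character $1_G \in \irr{B_0(G)}$ satisfies $(1_G)_N = 1_N$, and $1_N \in \irr{B_0(N)}$; hence by the definition of block covering (\cite[Theorem 9.2]{nbook}), $B_0(G)$ covers $B_0(N)$.

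Second, I would apply the following standard fact from block-theoretic Clifford theory: if a block $B$ of $G$ covers a block $b$ of $N \nor G$, then for \emph{every} $\theta \in \irr{b}$ there exists $\chi \in \irr{B}$ with $\theta$ as an irreducible constituent of $\chi_N$. Specialized to $B = B_0(G)$, $b = B_0(N)$, and the given $\theta$, this directly yields the desired $\chi \in \irr{B_0(G) \mid \theta}$, proving the lemma.

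The only non-formal ingredient is this second step, which is the main obstacle. Its standard justification proceeds via Fong--Reynolds theory: setting $T := G_\theta$, one has a bijection between blocks of $G$ covering $b$ and certain blocks $b^*$ of $T$ covering $b$, together with compatible bijections $\irr{b^* \mid \theta} \leftrightarrow \irr{B \mid \theta}$ induced by character induction. The non-emptiness of $\irr{b^* \mid \theta}$ follows from the $T$-invariance of $\theta$ and the projective representation theory of $T/N$, which produces characters of $T$ lying above $\theta$ in every block of $T$ covering $b$. Once this is in place (or cited from the standard block-theoretic literature), the lemma is immediate; no additional ideas are required.
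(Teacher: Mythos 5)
Your proposal is correct and follows essentially the same route as the paper: the paper notes just before the lemma that $B_0(G)$ covers $B_0(N)$ (via the trivial character, as you observe) and then proves the lemma simply by citing \cite[Theorem 9.4]{nbook}, which is precisely the standard block-theoretic Clifford theory fact you invoke in your second step. Your Fong--Reynolds sketch for that step is a reasonable account of how one could prove the cited theorem, but it is not needed here since the result is taken as a black box.
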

\begin{proof}
This is \cite[Theorem 9.4]{nbook}.
\end{proof}

We will often use the following two facts in Sections \ref{sec:reduction} and \ref{sec:lie}.

\begin{lem}\label{covering}
Let $N \nor G$. Suppose that $B\in {\rm Bl}(G)$ is the only block covering $b \in {\rm Bl}(N)$. Then for every $\theta \in \irr b$, the set $\irr{G| \theta}\sbs \irr{B}$. 
\end{lem}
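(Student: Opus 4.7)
The plan is essentially to unwind the definition of block covering and combine it with the uniqueness hypothesis. Fix $\theta \in \irr b$ and let $\chi \in \irr{G \mid \theta}$. Since $\chi$ is an irreducible character of $G$, it lies in a unique block $B'$ of $G$. By the characterization of covering recalled just before the lemma (that is, \cite[Theorem 9.2]{nbook}), the block $B'$ covers a block of $N$ whose irreducible characters include some constituent of $\chi_N$. Because $\theta$ is such a constituent and $\theta \in \irr b$, we conclude that $B'$ covers $b$.

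At this point the uniqueness hypothesis does all the work: since $B$ is the \emph{only} block of $G$ covering $b$, we must have $B' = B$, and therefore $\chi \in \irr B$. As $\chi$ was an arbitrary element of $\irr{G \mid \theta}$, this gives the desired inclusion $\irr{G\mid\theta} \sbs \irr B$. The argument is essentially a one-line application of the definition of covering, so there is no real obstacle; the only point to be careful about is to invoke the correct direction of the characterization (namely, that a block of $G$ containing $\chi$ must cover any block of $N$ containing an irreducible constituent of $\chi_N$), which is standard.
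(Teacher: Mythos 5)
Your argument is correct and is essentially identical to the paper's proof: take $\chi$ above $\theta$, observe that the block of $\chi$ covers $b$ by \cite[Theorem 9.2]{nbook}, and invoke the uniqueness hypothesis to conclude it equals $B$. The only difference is that you spell out the covering condition a bit more explicitly, which does no harm.
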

\begin{proof}
Let $\chi\in{\rm Irr}(G|\theta)$ and let ${\rm Bl}(\chi)$ be the $p$-block of $G$ containing $\chi$. Then ${\rm Bl}(\chi)$ covers $b$ (see \cite[Theorem 9.2]{nbook}) and hence ${\rm Bl}(\chi)=B$.
\end{proof}

\begin{lem}\label{onlycovering}
Let $M \nor G$ and $P \in \syl p G$. If $P \cent G P \sbs M$, then $B_0(G)$ is the only block covering $B_0(M)$. In particular, $k(G/M)<k(B_0(G))$ as long as $P>1$.
\end{lem}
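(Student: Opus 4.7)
The plan is to deduce the uniqueness of the covering block from Brauer's Third Main Theorem, and then read off the character count inequality via Lemmas \ref{lem:blockabove} and \ref{covering}.

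First, I would note that $P\sbs PC_G(P)\sbs M$ gives $P\in\syl p M$ and $|G:M|$ coprime to $p$. Consequently $B_0(M)$ has defect group $P$, and any block $B$ of $G$ covering $B_0(M)$ must have a defect group containing a defect group of $B_0(M)$, hence a defect group $G$-conjugate to $P$.

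The heart of the argument is Brauer's Third Main Theorem applied to the $p$-subgroup $P$ and the intermediate subgroup $M$: the hypothesis $PC_G(P)\sbs M$ ensures that the Brauer induced block $B_0(M)^G$ is defined and equals $B_0(G)$. I would combine this with the standard principle that, when $b$ is a block of $N\nor G$ whose defect group $D$ satisfies $DC_G(D)\sbs N$, the induced block $b^G$ is the \emph{unique} block of $G$ covering $b$. This principle can be extracted via Brauer's First Main Theorem: blocks of $G$ with defect group $P$ correspond bijectively to blocks of $N_G(P)$ with defect group $P$, and this bijection preserves covering of $B_0(M)$ by $B_0(N_M(P))$, reducing the problem to $N_G(P)$ where $P$ is now normal Sylow and the inherited inclusion $C_G(P)\sbs N_M(P)$ allows one to finish by Schur--Zassenhaus and Clifford theory in the semidirect product $N_G(P)=P\rtimes L$.

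For the ``in particular'' clause, inflation identifies $\irr{G/M}$ with the set of characters of $G$ lying over $1_M\in\irr{B_0(M)}$. By Lemma \ref{covering}, all such characters must lie in $\irr{B_0(G)}$, so $k(G/M)\le k(B_0(G))$. If $P>1$, then $B_0(M)$ has positive defect, forcing $k(B_0(M))\ge 2$; choosing $\theta\in\irr{B_0(M)}$ with $\theta\ne 1_M$ and applying Lemma \ref{lem:blockabove} produces $\chi\in\irr{B_0(G)\mid\theta}$, which is not inflated from $G/M$ since $\chi_M$ involves the nontrivial $\theta$. This yields the strict inequality.

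The main obstacle is justifying uniqueness of the covering block: once one accepts the reduction via Brauer correspondence to the case of a normal Sylow subgroup, the remaining verification in $N_G(P)=P\rtimes L$ is a manageable exercise in Clifford theory, but checking that the Brauer correspondence indeed carries covering of $B_0(M)$ to covering of $B_0(N_M(P))$ is the delicate point.
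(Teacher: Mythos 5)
Your high-level plan is sound, and the ``standard principle'' you invoke --- that if $b$ is a block of $N \nor G$ with defect group $D$ and $DC_G(D) \subseteq N$, then $b^G$ is the unique block of $G$ covering $b$ --- is precisely the fact the paper uses to get uniqueness. The ``in particular'' clause is also handled essentially as in the paper (though the paper only needs Lemma~\ref{covering}: since $k(b_0) > 1$, pick a nontrivial $\theta \in \irr{b_0}$; then $\irr{G \mid \theta}$ is nonempty, lies in $\irr{B_0}$ by Lemma~\ref{covering}, and is disjoint from $\irr{G/M}$).

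Where you diverge is in how you propose to establish the uniqueness principle. The paper does it directly: by \cite[Theorem 9.26]{nbook} (Kn\"orr), any block $B$ of $G$ covering $b_0$ has defect group $P$; by \cite[Theorem 9.19, Lemma 9.20]{nbook}, the hypothesis $PC_G(P) \subseteq M$ forces any such $B$ to be \emph{regular} with respect to $M$, and a regular covering block is unique and equals $b_0^G$. Combined with Brauer's third main theorem giving $b_0^G = B_0$, the proof is immediate. Your proposed route --- reducing to $N_G(P)$ via Brauer's first main theorem and arguing in the split extension $P \rtimes L$ --- is a genuinely different strategy, closer in spirit to a Harris--Kn\"orr-type argument. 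You correctly flag the delicate point, namely that the Brauer correspondence preserves the covering relation in the required way, but you do not resolve it; this is not a routine verification and is precisely where the real content lies. As it stands, that step is a gap. The paper's route through regular blocks avoids it entirely and is considerably shorter, so if you want a complete proof I would recommend replacing the first-main-theorem reduction with the regular-block argument (or, alternatively, filling in the Harris--Kn\"orr step explicitly, which is substantially more work than ``a manageable exercise'').
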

\begin{proof} Write $B_0=B_0(G)$ and $b_0=B_0(M)$ to denote the principal $p$-blocks of $G$ and $M$ respectively. 
By \cite[Theorem 4.14 and Problem 4.2]{nbook} and Brauer's third main theorem \cite[Theorem 6.7]{nbook}, we have that $b_0^G=B_0$. Let $B$ be another block of $G$ covering $b_0$. By \cite[Theorem 9.26]{nbook}, $B \in {\rm Bl}(G|P)$. By \cite[Theorem 9.19, Lemma 9.20]{nbook} we have that $B$ is regular with respect to $M$ and hence $B=b_0^G=B_0$. The last part follows from Lemma \ref{covering} as $k(b_0)>1$ if $P>1$.
\end{proof}
Next we record some observations that will be useful when dealing with quotients.  The following can be seen using Clifford theory.  A proof may be found, for example, in \cite[Lemma 3.2.7]{SFthesis}.

\begin{lem}\label{lem:restnumber}
Let $G$ be a finite group and $N\lhd G$ such that $G/N$ is cyclic.  Let $\chi\in\irr{G}$.  Then the number of irreducible constituents of the restriction $\chi_N$ is the number of $\beta\in\irr{G/N}$ such that $\chi\beta=\chi$.
\end{lem}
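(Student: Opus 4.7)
\medskip

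\noindent\textbf{Proof plan for Lemma \ref{lem:restnumber}.}
The plan is to evaluate the sum $S:=\sum_{\beta\in\irr{G/N}}[\chi\beta,\chi]$ in two different ways and compare.

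First, since $G/N$ is cyclic, each $\beta\in\irr{G/N}$ is linear, hence $\chi\beta\in\irr{G}$. Therefore $[\chi\beta,\chi]\in\{0,1\}$, and it equals $1$ precisely when $\chi\beta=\chi$. Thus $S$ is exactly the cardinality of $\{\beta\in\irr{G/N}:\chi\beta=\chi\}$, which is the quantity we want to compute.

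Second, inflating $\beta$ to $G$ (so that $\beta(g)=0$ would not occur; rather $\beta$ vanishes off its values on cosets of $N$), I would write
\[
S=\frac{1}{|G|}\sum_{g\in G}|\chi(g)|^2\sum_{\beta\in\irr{G/N}}\beta(g).
\]
By the column orthogonality relations applied to $G/N$, the inner sum equals $[G:N]$ if $g\in N$ and $0$ otherwise. Hence
\[
S=\frac{[G:N]}{|G|}\sum_{n\in N}|\chi(n)|^2=[\chi_N,\chi_N]_N.
\]
By Clifford's theorem, $\chi_N=e(\theta_1+\cdots+\theta_t)$, where $\theta_1,\dots,\theta_t$ are the distinct $G$-conjugates of some $\theta\in\irr{N}$ and $e$ is the ramification index, so $[\chi_N,\chi_N]_N=e^2t$. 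The number $t$ of constituents is the quantity on the other side of the claimed equality.

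It remains to show $e=1$, and this is exactly where the hypothesis that $G/N$ is cyclic enters. Let $T=I_G(\theta)$ be the inertia group. Then $T/N$ is a subgroup of the cyclic group $G/N$ and is therefore itself cyclic. Since $\theta$ is $T$-invariant and $T/N$ is cyclic, $\theta$ extends to a character $\psi\in\irr{T}$ (one can simply choose a value of $\psi$ on a preimage of a generator of $T/N$). By the Clifford correspondence $\chi=\psi^G$, and $\psi_N=\theta$ forces $e=\psi(1)/\theta(1)=1$. Combining the two computations gives $|\{\beta\in\irr{G/N}:\chi\beta=\chi\}|=t$, as required.

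The only non-routine step is the extension argument yielding $e=1$; everything else is orthogonality together with Clifford's theorem.
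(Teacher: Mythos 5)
Your proof is correct in substance, and it fills a genuine gap since the paper simply cites \cite[Lemma 3.2.7]{SFthesis} rather than giving an argument. The orthogonality double-count reducing the problem to $[\chi_N,\chi_N]_N = e^2t$ is clean, and the cyclicity of $G/N$ is used exactly where it must be, to force $e=1$.

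One small imprecision in the last paragraph: after choosing an extension $\psi\in\irr{T}$ of $\theta$, you assert ``by the Clifford correspondence $\chi=\psi^G$.'' That is not automatic; the Clifford correspondent of $\chi$ in $T=I_G(\theta)$ is the unique $\psi'\in\irr{T\mid\theta}$ with $(\psi')^G=\chi$, and there is no reason it should equal the particular extension $\psi$ you fixed (Gallagher gives $\irr{T\mid\theta}=\{\psi\mu:\mu\in\irr{T/N}\}$, any of which could be $\psi'$). The conclusion survives, though: since $T/N$ is cyclic, hence abelian, every $\mu\in\irr{T/N}$ is linear, so every member of $\irr{T\mid\theta}$ has degree $\theta(1)$ and restricts to $\theta$ on $N$. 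In particular $\psi'_N=\theta$, giving $e=[\chi_N,\theta]=[\psi'_N,\theta]=1$. I would rephrase the final step to argue about $\psi'$ directly rather than asserting $\psi'=\psi$. Also, ``one can simply choose a value of $\psi$ on a preimage of a generator'' elides the check that this yields a representation; better to just invoke the standard extension theorem for $T/N$ cyclic (e.g.\ \cite[Corollary 11.22]{isbook}).
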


The next lemma can be found, for example, as \cite[Lemma 17.2]{CE04}.

\begin{lem}\label{lem:quotientblock}
Let $G$ be a finite group.  Two characters of $G/\zent{G}$ are in the same block if and only if they are in the same block as a character of $G$.
\end{lem}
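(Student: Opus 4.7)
My plan is to identify $\irr{G/\zent{G}}$ with the subset of $\irr{G}$ consisting of those characters whose kernel contains $Z:=\zent{G}$, and to show two things: (i) this subset is a union of $p$-blocks of $G$, and (ii) the block decomposition induced on it from $G$ matches that coming from $G/Z$. Both sides are defined by the central-character/Brauer-graph description of blocks, so the strategy is to exhibit the two equivalence relations as literally the same.

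First I would reduce to the case where $Z$ is a $p'$-group. Write $Z=Z_p\times Z_{p'}$. Since $Z_p$ is a normal $p$-subgroup, $Z_p\le \oh p G$, so $Z_p$ acts trivially on every simple $\overline{\mathbb{F}}_p G$-module. Hence the surjection $G\twoheadrightarrow G/Z_p$ induces a bijection on blocks and identifies the sets of irreducible ordinary and Brauer characters on both sides. Replacing $G$ by $G/Z_p$, I may assume $Z$ is central and of order coprime to $p$.

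Under this assumption, for any $\chi\in\irr G$ and $z\in Z$, the central character $\omega_\chi(\widehat{\{z\}})=\chi(z)/\chi(1)$ is a $p'$-th root of unity. Since reduction modulo a maximal ideal $\mathfrak{p}$ above $p$ is injective on $p'$-th roots of unity, the congruence $\omega_\chi(\widehat{\{z\}})\equiv 1\pmod{\mathfrak{p}}$ is equivalent to the equality $\chi(z)=\chi(1)$, i.e.\ $z\in\ker\chi$. Characters in the same $p$-block of $G$ have central characters that coincide modulo $\mathfrak{p}$ on every class sum, so they agree on whether $Z\le\ker\chi$. Therefore $\irr{G/Z}$, viewed inside $\irr G$, is a union of $p$-blocks of $G$, which proves (i).

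For (ii), I would use that the irreducible Brauer characters of $G$ containing $Z$ in their kernel are exactly the inflations of irreducible Brauer characters of $G/Z$ (inflation is a faithful exact functor from $\overline{\mathbb{F}}_p[G/Z]$-modules to $\overline{\mathbb{F}}_p G$-modules, preserving and reflecting simplicity), and that for $\chi\in\irr{G/Z}$ the reduction of $\chi$ modulo $p$ as a $G$-character agrees with its reduction as a $G/Z$-character under this identification. Consequently the decomposition numbers $d_{\chi\varphi}$ attached to $G$ and to $G/Z$ agree, so the Brauer graphs on $\irr{G/Z}$ coincide, which is exactly the equivalence of the two block relations. The main technical obstacle is cleanly pinning down this last compatibility of decomposition numbers under inflation; once that bookkeeping is in place, the result follows immediately from the characterisation of a block as a connected component of the Brauer graph.
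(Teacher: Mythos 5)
Your proof is correct and supplies the standard textbook argument; the paper itself simply cites \cite[Lemma 17.2]{CE04} without reproving the statement, so you are filling in an actual proof where the paper gives a reference. Both halves of your argument are sound: part (i) rests on the fact that central characters agree modulo $\mathfrak{p}$ within a block and that reduction modulo $\mathfrak{p}$ is injective on $p'$-th roots of unity, and part (ii) rests on inflation commuting with reduction modulo $p$, which makes the Brauer graph of $G/Z$ coincide with the restriction of the Brauer graph of $G$ to $\irr{G/Z}$ (part (i) is needed here to know the full connected components of the latter already sit inside $\irr{G/Z}$). Two spots deserve sharper wording. First, the reduction sentence claims the map $G\twoheadrightarrow G/Z_p$ ``identifies the sets of irreducible ordinary and Brauer characters'' --- this is false for ordinary characters, since $G$ has strictly more of them than $G/Z_p$ when $Z_p>1$; what you actually need, and what follows because the kernel of $kG\to k[G/Z_p]$ is nilpotent so central idempotents lift compatibly, is that for $\chi\in\irr{G/Z_p}$ the block of $G$ and the block of $G/Z_p$ containing $\chi$ correspond under the induced bijection on blocks. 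Second, after replacing $G$ by $G/Z_p$ the center of the new group may again have $p$-torsion (take $G={\sf D}_8$ and $p=2$), so you have not literally reduced to the case where $\zent{G}$ is a $p'$-group; rather, you have reduced to the statement for an arbitrary central $p'$-subgroup $Z\leq\zent{G}$. That is harmless --- nothing in (i) or (ii) uses $Z=\zent{G}$ --- but it should be said explicitly so the reduction parses.
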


If $B$ is a $p$-block with defect group $D$, then
 ${\rm Irr}_{0}(B)=\{ \chi \in {\rm Irr}(B) \ | \chi(1)_p=|G:D|_p \}$, where $n_p$ denotes the $p$-part of the integer $n$. We write $k_0(B)=|{\rm Irr}_{0}(B)|$ for the number
 of height zero characters in $B$ (if $D>1$ then it is well-known that $k_0(B)\geq 2$).
  If $B=B_0(G)$ is the principal $p$-block
 of $G$, then $D \in \syl p G$ and ${\rm Irr}_0(B)$ is the subset of $p'$-degree characters of $B$.

\smallskip

We will often use the following results on the number of height zero characters in (principal) blocks for $p\in \{2, 3\}$ in Section 2.

\begin{thm}\label{divisibility2} Let $B$ be a 2-block of $G$ with defect group $D$. 
\begin{enumerate}[{\rm (a)}]
\item If $2 \mid |D|$ then $2\mid k_0(B)$.
\item If $4 \mid |D|$ then $4\mid k_0(B)$.
\item $|D|=2$ if, and only if, $k_0(B)=2$. 
\end{enumerate}
\end{thm}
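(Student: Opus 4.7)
The plan is to deduce (a) and (b) from classical divisibility theorems for $2$-blocks and then to obtain (c) as a short consequence of those two parts together with the trivial-defect and defect-$1$ cases.

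For (a) and (b), I would appeal to Brauer's classical results on the number $k_0(B)$ of height-zero characters in a $2$-block, which can be found in standard references on modular representation theory (for example, Navarro's \emph{Characters and Blocks of Finite Groups}, or Sambale's monograph on block invariants). The conceptual content is that a suitable Galois automorphism acts on $\Irr_0(B)$: complex conjugation induces an involution with no fixed points when $D \neq 1$, establishing the parity statement (a); a refinement involving a primitive fourth root of unity (or, equivalently, the behaviour of the Brauer contribution formula modulo $4$) yields divisibility by $4$ once $4 \mid |D|$, establishing (b). I would simply cite these results rather than reproduce the proofs.

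For (c), the implication $|D|=2 \Rightarrow k_0(B)=2$ comes from Brauer's description of blocks of defect $1$: the inertial index $e$ divides $p-1=1$, so $e=1$, and Brauer's formula gives $k(B) = e + (|D|-1)/e = 2$; moreover $|D|=p$ forces every character in $B$ to have height $0$, so $k_0(B)=k(B)=2$. Conversely, if $k_0(B)=2$, then $|D|>1$ (else $k(B)=1$); since $D$ is a $2$-group, either $|D|=2$ or $4 \mid |D|$, and the latter is excluded by (b), which would force $4 \mid k_0(B)=2$. Hence $|D|=2$.

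The main obstacle is essentially bibliographic: pinning down a clean, directly quotable statement of (a) and (b) in precisely the form we need, together with the blanket convention that ``defect group'' here is a Sylow $p$-subgroup in the principal-block case so nothing changes in the indexing. Once those are in hand, the deduction of (c) is only a few lines.
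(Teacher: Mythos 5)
Your proposal is correct and aligns with the paper on parts (a), (b), and the direction $k_0(B)=2\Rightarrow |D|=2$ of (c). Your sketch of the Galois action — complex conjugation for parity, a fourth root of unity for divisibility by $4$ — is precisely the mechanism used in the cited reference \cite[Lemma 2.2]{NST18}, and part (b) is also Landrock's result \cite[Corollary 1.3]{Lan81}; the paper simply points to these rather than reproving them, as you propose to do. The only genuine divergence is in the direction $|D|=2\Rightarrow k_0(B)=2$: you invoke Brauer's cyclic-defect theory (inertial index $e\mid p-1=1$, hence $e=1$ and $k(B)=e+(|D|-1)/e=2$, with all heights zero since $|D|=p$), whereas the paper observes that a block with defect group $\cyc{C}_2$ is nilpotent and then cites Brou\'e--Puig \cite[Theorem 1.2, Corollary 1.4]{BP80} to get $k_0(B)=2$. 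Both are correct and essentially equally short; the cyclic-defect route is perhaps more self-contained and classical, while the nilpotent-block route is a cleaner one-line citation and avoids any appeal to the structure of Brauer trees.
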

\begin{proof}
Parts (a) and (b) can be proven following the ideas in the proof of \cite[Lemma 2.2]{NST18} (part (b) is \cite[Corollary 1.3]{Lan81}).
%\textcolor{red}{Actually, they appeared first in Landrock's {\em On the numbers of characters on a 2-block} (1981)}. 
The ``if'' implication in part (c) follows from parts (a) and (b). If $|D|=2$, then the block $B$ is nilpotent and by \cite[Theorem 1.2, Corollary 1.4]{BP80} $k_0(B)=2$. 
 \end{proof}
 
% \textcolor{blue}{C: It seems that part (c) above, for general $p$, is an open conjecture, see \cite{KNST}.}
  
 \medskip 

We remark that, although Theorem \ref{divisibility2} does not rely on the Classification of Finite Simple Groups, the following extension to the case $p=3$ (more specifically, part (b) in Theorem \ref{divisibility3} below) does.

\begin{thm}\label{divisibility3} Let $B$ be a principal 3-block of $G$ with defect group $D$. 
\begin{enumerate}[{\rm (a)}]
\item If $3\mid |D|$ then $3\mid k_0(B)$.
\item If $B=B_0(G)$ is the principal 3-block of $G$, then $|D|=3$ if, and only if, $k_0(B)=3$ 
\end{enumerate}
\end{thm}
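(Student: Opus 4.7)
My plan is to treat parts (a) and (b) separately, with (b) genuinely requiring the Classification of Finite Simple Groups.

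For (a), the strategy is to invoke the principal-block case of the Alperin--McKay conjecture for $p=3$ (known through recent work on the inductive conditions for principal blocks at odd primes), which gives $k_0(B_0(G)) = k_0(B_0(N_G(P)))$, where $P$ is a Sylow $3$-subgroup of $G$. Writing $N = N_G(P)$, the subgroup $P$ is normal in $N$ and $N/P$ is a Hall $3'$-group, so one can analyse the height-zero characters of $B_0(N)$ directly via Clifford theory over $P$. The aim is to exhibit a free $\ZZ/3\ZZ$-action on this set: such an action can be produced by an element of order $3$ in $\mathrm{Gal}(\Q(\zeta_{|P|})/\Q)$ fixing all $3'$-th roots of unity when $|P| \geq 9$, and the base case $|P| = 3$ is handled by the classical theory of blocks with cyclic defect group of prime order, where a height argument shows that all characters in $B_0$ have height $0$ and $k(B_0) = e + (p-1)/e = 3$ for either possible value $e \in \{1,2\}$.

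For (b), the ``if'' direction follows immediately from this base case. For the ``only if'' direction, assume $k_0(B_0(G)) = 3$. By (a) one has $P \neq 1$, and it suffices to rule out $|P| \geq 9$. Following the general philosophy of Theorem \ref{thmB} and the reductions in \cite{KS20}, I would take a counterexample $G$ of minimal order and reduce to the almost simple case via Lemmas \ref{covering} and \ref{onlycovering}. Then, appealing to the Classification of Finite Simple Groups, one verifies that for every almost simple $G$ whose socle has a Sylow $3$-subgroup of order at least $9$ one has $k_0(B_0(G)) > 3$.

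The main obstacle I anticipate lies precisely in this final step of (b): there are many families of almost simple groups of Lie type whose Sylow $3$-subgroup is of small order but non-cyclic (for instance elementary abelian of rank $2$), and for these a uniform lower bound on $k_0(B_0)$ requires a careful analysis of $3'$-degree characters in the principal block using Deligne--Lusztig theory and $e$-Harish-Chandra series.
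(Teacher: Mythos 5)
The paper does not prove this theorem; it simply cites prior work. Part (a) is attributed to Landrock \cite[Corollary 1.6]{Lan81}, and part (b) is exactly \cite[Theorem C]{NST18}. Your proposal attempts to re-derive both, and it has a genuine gap in (a) and a vagueness issue in (b).

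The gap in (a): you invoke the principal-block case of the Alperin--McKay conjecture at $p=3$ as ``known through recent work on the inductive conditions,'' but this is not a theorem. The Alperin--McKay conjecture is still open for $p=3$ even in the principal-block case; indeed the introduction to the present paper explicitly calls it a ``long-standing conjecture'' and offers Theorem~A as \emph{new evidence} for it, which would make no sense if it were already proved. So your reduction $k_0(B_0(G)) = k_0(B_0(\norm{G}{P}))$ is not available. Landrock's argument (the one the paper actually cites) avoids any such reduction: it works with $B_0(G)$ directly, using the Galois automorphism fixing $3'$-roots of unity together with $p$-adic congruences for diagonal contributions $m_{\chi\chi}$ of height-zero characters, to show the $*$-orbits on $\irr_0(B_0)$ are organized so that $3\mid k_0(B_0)$ whenever $3\mid |D|$. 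Your Galois-action instinct is in the right spirit, but it must be carried out on $G$ itself, not on $\norm{G}{P}$, and the heavy lifting is in proving the orbit sizes and the absence of leftover fixed points via contribution/decomposition-matrix arithmetic rather than by ``exhibiting a free action'' directly.

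For (b), the ``if'' direction via cyclic defect group theory is fine. For the ``only if'' direction, your sketch (minimal counterexample, reduce to almost simple via Lemmas~\ref{covering} and~\ref{onlycovering}, then CFSG check) accurately describes the overall shape of the proof of \cite[Theorem C]{NST18}, and you correctly identify where the real work lies (small non-cyclic Sylow $3$-subgroups of groups of Lie type). But as written this is a roadmap to an existing theorem rather than a new argument, and it would be both shorter and safer to cite \cite[Theorem C]{NST18} directly, as the paper does.
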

\begin{proof}
Part (a) is \cite[Corollary 1.6]{Lan81}. 
Part (b) is \cite[Theorem C]{NST18}. 
\end{proof}

We summarize below what is known on (principal) blocks with up to 4 irreducible characters. Our proof of Theorem \ref{thmA}
relies on those facts.

\begin{thm}\label{smaller4}
Let $B$ be a $p$-block of a finite group $G$ with defect group $D$. 
\begin{enumerate}[{\rm (a)}]
\item $k(B)=1$ if, and only if, $D=1$.
\item $k(B)=2$ if, and only if, $D\cong {\sf C}_2$.
\item If $B=B_0(G)$ the principal $p$-block, then  $k(B)=3$ if, and only if, $D \cong {\sf C}_3$.
\item If $B=B_0(G)$ the principal $p$-block, then $k(B)=4$ implies $D \in \{  {\sf C}_2\times {\sf C}_2, {\sf C}_4, {\sf C}_5 \}$. %Moreover, if $l(B) \in \{ 1, 3\}$ then $|D|=4$, and otherwise $|D|=5$.
\end{enumerate}
\end{thm}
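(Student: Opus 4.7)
The plan is to assemble the statement directly from results already recalled in the introduction, since each of the four parts has been established in the literature; no new argument is needed. The approach is therefore one of pointing to the correct reference for each implication and, where the stated equivalence does not literally appear in a single source, noting that one direction is standard block theory.

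For part (a), the implication $D=1\Rightarrow k(B)=1$ is the classical fact that a block of defect zero contains a unique irreducible character (see, e.g., \cite[Theorem 3.18]{nbook}). The converse is Brauer's theorem cited in the introduction. Part (b) is exactly the theorem of Brauer \cite{Bra} mentioned in the introduction, which gives both directions: $k(B)=2$ if and only if $D\cong \cyc{C}_2$. For part (c), the implication $D\cong \cyc{C}_3\Rightarrow k(B_0)=3$ is a consequence of the Brauer tree theory of cyclic defect groups applied to the principal block (and also follows from Theorem \ref{divisibility3}(b) applied in the easy direction). The reverse implication is Belonogov's result \cite{Bel90}, for which Koshitani and Sakurai have recently supplied an accessible alternative proof in \cite{KS20}. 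Finally, part (d) is precisely the content of \cite{KS20}, where the authors show that any principal $p$-block with exactly four ordinary irreducible characters has defect group in $\{\cyc{C}_2\times\cyc{C}_2, \cyc{C}_4, \cyc{C}_5\}$.

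Since Theorem \ref{smaller4} is a summary result, there is really no single step that counts as the main obstacle within the proof itself. If one had to identify the deepest ingredient being invoked, it would be part (d), whose proof in \cite{KS20} depends on a reduction to (almost) simple groups and uses the Classification of Finite Simple Groups, as well as the case--by--case analysis carried out in \cite{KNST} on the structure of Sylow $p$-subgroups when there are exactly two $G$-conjugacy classes of $p$-elements. Part (c) likewise rests nontrivially on Theorem \ref{divisibility3}(b), which uses the Classification. These observations also explain why Theorem \ref{thmA} is plausible: once parts (a)--(d) are in hand, the task for $k(B_0)=5$ is to produce the analogous list, and that is the content of the remainder of the paper.
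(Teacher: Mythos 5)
Your proposal is correct and shares the paper's overall strategy of assembling the theorem directly from citations to the literature. Two small discrepancies are worth flagging. First, the reference \cite{Bra} for part (b) is a theorem of Brandt, not Brauer; you point to the correct source but misattribute the author. Second, and a little more substantively, for the direction $D\cong\cyc{C}_3\Rightarrow k(B_0)=3$ in part (c), you suggest appealing either to cyclic-defect Brauer-tree theory or to Theorem~\ref{divisibility3}(b), whereas the paper takes a lighter route: the Brauer--Feit bound \cite[Theorem 1*]{BF59} gives $k(B)\leq\lfloor |D|^2/4\rfloor+1=3$ when $|D|=3$, and parts (a) and (b) exclude $k(B)\in\{1,2\}$, forcing $k(B)=3$. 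Your first alternative (Brauer-tree theory) is fine and standard, but the second one invokes the CFSG-dependent Theorem~\ref{divisibility3}(b) where nothing so heavy is required, and would additionally need the remark that $k(B)=k_0(B)$ for abelian defect. Finally, the paper is slightly more explicit about the converse direction of (c), splitting into $l(B)=1$ (handled by \cite{Kul84}) and $l(B)=2$ (handled by \cite{KS20}), and it adds a parenthetical in (d) explaining the $l(B_0)=1$ case via a normal $p$-complement. None of this affects the correctness of your proposal.
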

\begin{proof}
Part (a) is \cite[Theorem 3.18]{nbook}. 
Part (b) is \cite[Theorem A]{Bra}.
The ``only if'' implication in  part (c) follows from \cite{Kul84} (when $l(B)=1$) and \cite{KS20} (when $l(B)=2$). The converse in part (c) follows since $k(B)\leq 3$ by \cite[Theorem 1*]{BF59} and parts (a) and (b).
Part (d) is \cite[Theorem 1.1]{KS20} (note that if $l( B_0(G))=1$, then $G$ has a normal $p$-complement $K$ and $k( B_0(G))=k(B_0(G/K))=k(D)=4$).
\end{proof}

\smallskip

\section{A reduction to almost simple groups}\label{sec:reduction}

\noindent The aim of this section is to prove the following reduction theorem. 

\begin{thm}\label{thmB} Let $G$ be a finite group, $p$ a prime and $P\in \syl p G$. If $G$ is a minimal counterexample to the statement of Theorem A, then $G$ is almost simple. Write $S={\rm soc}(G)$ and $\bar B_0=B_0(G/S)$. Then
either $p$ does not divide $|G:S|$ and $S\cent G P <G$, or $k(\bar B_0)=4$.
Moreover $S$ is a simple group of Lie type not isomorphic to an alternating or sporadic group.
\end{thm}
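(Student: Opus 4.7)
My plan is to study a minimal counterexample $G$ (so $k(B_0(G))=5$ and $P\notin\{{\sf C}_5,{\sf C}_7,{\sf D}_8,{\sf Q}_8\}$) by successively trimming its normal structure. First I would show $O_{p'}(G)=1$: since $N:=O_{p'}(G)$ is a $p'$-group, $B_0(N)=\{1_N\}$, and $B_0(G)$ covers this unique block by Brauer's third main theorem, so every $\chi\in\irr{B_0(G)}$ must restrict to a multiple of $1_N$, forcing $N\le\ker{\chi}$. Thus $\irr{B_0(G)}=\irr{B_0(G/N)}$ with the Sylow $p$-subgroup unchanged, and minimality of $|G|$ forces $N=1$.

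Next I would rule out abelian minimal normal subgroups. Any such $N$ must be elementary abelian $p$-group, hence inside $\oh{p}{G}\le P$. Standard Brauer theory identifies $\irr{B_0(G/N)}$ with the characters of $\irr{B_0(G)}$ trivial on $N$, so $k(B_0(G/N))\le 5$. Minimality (if equality holds) forces $P/N\in\{{\sf C}_5,{\sf C}_7,{\sf D}_8,{\sf Q}_8\}$; otherwise Theorem \ref{smaller4} restricts $P/N$ to $\{1,{\sf C}_2,{\sf C}_3,{\sf C}_2\times{\sf C}_2,{\sf C}_4,{\sf C}_5\}$. In each case, combining these short lists with the divisibility bounds for $k_0(B_0(G))\le 5$ in Theorems \ref{divisibility2},\,\ref{divisibility3} and with orbit counting on $\irr{N}\setminus\{1_N\}$ lifted to $\irr{B_0(G)}$ via Lemma \ref{lem:blockabove} produces a contradiction. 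An analogous covering/orbit argument then shows that the socle $S=\mathrm{soc}(G)$ cannot be a nontrivial direct product of simple groups.

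Almost-simpleness follows because $\cent{G}{S}\cap S=\zent{S}=1$: if $\cent{G}{S}\neq 1$ it would contain a minimal normal of $G$, but the unique one is $S$, forcing $S\le\cent{G}{S}$ and contradicting that $S$ is non-abelian simple. For the dichotomy in part (b), one studies the quotient $\bar B_0=B_0(G/S)$: applying Lemma \ref{onlycovering} to $M:=S\cent{G}{P}$ (which requires $P\le S$, equivalently $p\nmid|G:S|$), together with Lemmas \ref{lem:blockabove} and \ref{covering} to track how $\irr{B_0(G)}$ sits above $\irr{B_0(S)}$, one shows that whenever the first alternative of (b) fails---i.e.\ when $p\mid|G:S|$ or $S\cent{G}{P}=G$---Theorem \ref{smaller4} applied to $\bar B_0$ leaves only $k(\bar B_0)=4$. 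Finally, CFSG leaves the families cyclic, alternating, sporadic, and Lie type; cyclic $S={\sf C}_p$ yields $P={\sf C}_p$ with $p\in\{5,7\}$, contradicting the counterexample hypothesis, while ${\sf A}_n$ ($n\ge 5$) and the 26 sporadic groups are eliminated by direct inspection of their principal-block character counts.

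The main obstacle is the second step (eliminating abelian minimal normal subgroups): the case-by-case combinatorics needed to reconcile the short list of possibilities for $P/N$ with the $p=2,3$ divisibility of $k_0$ and the $G$-action on $\irr{N}$ is where the reduction is most delicate, and where the present approach diverges from \cite{KS20} by not requiring the Sylow-structure results of \cite{KNST}. The subsequent Lie-type analysis is then deferred to Section \ref{sec:lie}.
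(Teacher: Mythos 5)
Your overall shape is right, and the first move (killing $\oh{p'}{G}$ by covering) is essentially the paper's Step 1. But there are several genuine gaps that prevent this from being a working proof.

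First, you never reduce to $G$ not $p$-solvable and $\zent{G}=1$. In the paper this is Steps 2--4, and both are essential preconditions for everything that follows. The reduction uses the classification of groups with at most 5 conjugacy classes \cite{VV85} (for the $p$-solvable case) and, crucially, Brauer's formula
\[
5=k(B_0)=|\zent G|_p\, l(B_0)+\sum_{i=1}^{k} l\bigl(B_0(\cent{G}{x_i})\bigr),
\]
where the $x_i$ range over non-central $p$-element classes. This equation is what bounds $l(B_0)$, bounds the number $k$ of non-central $p$-element classes, forces $\zent{G}=1$, and (Step 5) forces the minimal normal subgroup to be unique. Your proposal does not have any substitute for this formula; in particular your almost-simpleness argument assumes a unique minimal normal subgroup without proving it, and the "analogous covering/orbit argument" you invoke for ruling out a semisimple socle with more than one factor is precisely where the paper needs the formula plus the Alperin--Dade isomorphic-blocks theorem (Theorem~\ref{isomblocks}).

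Second, for the case of an abelian minimal normal subgroup $N$ you assert that combining the short list of possibilities for $P/N$ from Theorem~\ref{smaller4} with the $p\in\{2,3\}$ divisibility of $k_0$ and orbit counting on $\irr{N}\setminus\{1_N\}$ "produces a contradiction," and you flag this as the hard step. It genuinely is, and the sketch as written does not close it. The paper's Step~6 needs, beyond the divisibility theorems, the distinction $\cent{P}{N}=N$ versus $\cent{P}{N}>N$, Theorem 9.21 of \cite{nbook} combined with \cite{VV85} in the first sub-case, the Kessar--Malle height-zero theorem in the $k(\bar B_0)=3$ sub-case, and most importantly the technical Lemma~\ref{tech} (bounding $k(B_0(M))$ when $P\cent{G}{P}\subseteq M\lhd G$, which in turn relies on \cite{VV85}) to dispose of the $k(\bar B_0)=4$ sub-case. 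None of these appear in your plan, and without Lemma~\ref{tech} I do not see how to finish the $|P/N|\in\{4,5\}$ sub-case, since $P$ can have order $\gg 8$ there a priori.

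Finally, the elimination of alternating socles for $n>6$ cannot be done by "direct inspection": one needs an argument for arbitrary $n$. The paper uses that $|G:S|\le 2$, so $k(\bar B_0)\ne4$, hence $G\cong {\sf S}_n$ with $p$ odd and $P\in\Syl_p(S)$, then Olsson's inequality $k(B_0({\sf A}_n))\le k(B_0({\sf S}_n))$ together with minimality and Theorem~\ref{smaller4} to derive a contradiction. Your treatment of sporadics and small alternating groups via GAP is fine, but the general $n$ case needs this extra ingredient.
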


The following technical result will be useful for handling some cases in the proof of Theorem \ref{thmB}.

\begin{lem}\label{tech} Let $M \nor G$ be such that $P\cent G P \subseteq M$, where $P\in \syl p G$. Write $B_0=B_0(G)$ and $b_0=B_0(M)$. If $k(B_0)=5$, then $k(b_0)\in \{ 4, 5, 7, 11, 13\}$. Moreover, if $p=2$ and $M<G$, then $k(b_0)=7$.
\end{lem}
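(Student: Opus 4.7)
The plan is to combine the covering-block framework of Lemmas~\ref{onlycovering} and~\ref{covering} with Clifford theory and a short finite case analysis. First, the hypothesis $P\cent{G}{P} \sbs M$ forces $P \sbs M$, so $P \in \syl p M$ and $[G:M]$ is coprime to $p$. By Lemma~\ref{onlycovering}, $B_0$ is the unique block of $G$ covering $b_0$; consequently, by Lemma~\ref{covering}, every character of $G$ lying above a constituent of $\irr{b_0}$ belongs to $B_0$, and by Clifford theory every $\chi \in \irr{B_0}$ has its $M$-constituents in $\irr{b_0}$. Thus $\irr{B_0}$ is partitioned according to the $G$-orbits on $\irr{b_0}$: writing these orbits as $O_1 = \{1_M\}, O_2, \ldots, O_r$ with $n_i := |O_i|$ and $m_i := |\irr{G \mid \theta_i}|$ for some $\theta_i \in O_i$, we obtain
\[ \sum_{i=1}^r m_i = k(B_0) = 5, \qquad \sum_{i=1}^r n_i = k(b_0), \qquad m_1 = k(G/M), \]
and the final clause of Lemma~\ref{onlycovering} forces $m_1 = k(G/M) \leq 4$.

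I would then enumerate the possible isomorphism types of $G/M$ with $k(G/M) \leq 4$ and $p\nmid|G/M|$; by the classification of groups of small class number, the nontrivial possibilities lie in $\{{\sf C}_2, {\sf C}_3, {\sf S}_3, {\sf C}_4, {\sf C}_2 \times {\sf C}_2, {\sf D}_{10}, {\sf A}_4\}$. For each, the inertia subgroup $T_i$ of $\theta_i$ (for $i \geq 2$) satisfies $M \leq T_i \leq G$ with $n_i = [G:T_i]$, and $m_i = |\irr{T_i \mid \theta_i}|$ can be read off from $T_i/M$: by Gallagher's theorem when $\theta_i$ extends to $T_i$, and by projective Clifford theory with the relevant cocycle in $H^2(T_i/M, \CC^*)$ otherwise (which is only relevant when $T_i/M$ has nontrivial Schur multiplier, i.e., when $T_i/M$ involves ${\sf C}_2\times{\sf C}_2$ or ${\sf A}_4$). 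Solving $\sum_{i \geq 2} m_i = 5 - k(G/M)$ in each case and computing $k(b_0) = 1 + \sum_{i \geq 2} n_i$ yields the claimed values in $\{4,5,7,11,13\}$. The only superficial exception --- $G/M \cong {\sf C}_2 \times {\sf C}_2$ with a non-extending $G$-invariant $\theta_2$, which would give $k(b_0) = 2$ --- is ruled out by Theorem~\ref{smaller4}(b), since here $|P|$ is a $p$-power with $p \neq 2$.

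For the moreover-clause, suppose $p = 2$ and $M < G$, so $[G:M]$ is a nontrivial odd integer. Since no non-identity element of an odd-order group is conjugate to its inverse, $k(G/M)$ is odd; combined with $2 \leq k(G/M) \leq 4$, this forces $k(G/M) = 3$. The only odd-order group with three conjugacy classes is ${\sf C}_3$, so $G/M \cong {\sf C}_3$, and the previous enumeration then yields $k(b_0) = 7$. The main technical obstacle is the case-by-case analysis in the second paragraph, in particular handling the projective Clifford correspondents for $T_i/M \in \{{\sf C}_2 \times {\sf C}_2, {\sf A}_4\}$ and cleanly ruling out configurations that would superficially produce a value of $k(b_0)$ outside $\{4,5,7,11,13\}$.
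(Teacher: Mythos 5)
Your framework matches the paper's: Lemma~\ref{onlycovering} shows $B_0$ is the unique block covering $b_0$, so $\irr{G/M}\sbs\irr{B_0}$ and $k(G/M)<5$, and \cite{VV85} restricts $G/M$ to $\{1,{\sf C}_2,{\sf C}_3,{\sf S}_3,{\sf C}_4,{\sf C}_2\times{\sf C}_2,{\sf D}_{10},{\sf A}_4\}$; Clifford theory then governs the $G$-orbit structure of $\irr{b_0}$, and your bookkeeping $\sum m_i=5$, $\sum n_i=k(b_0)$, $m_1=k(G/M)$ is a clean reformulation of what the paper works out group by group. Your suggestion to handle the ${\sf C}_2\times{\sf C}_2$ and ${\sf A}_4$ inertia quotients via projective Clifford theory is a valid way to organize the $k(G/M)=4$ subcases.

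However, the proof itself is not carried through. You write that solving $\sum_{i\geq 2}m_i=5-k(G/M)$ ``in each case \dots\ yields the claimed values'' and then defer the actual enumeration as the ``main technical obstacle'' --- but that enumeration \emph{is} the proof. For $G/M\cong{\sf S}_3$ one needs a genuine argument (the paper passes through the index-$2$ normal subgroup $K$ over $M$) to rule out proper nontrivial inertia groups and land on $k(b_0)=13$; for the four quotients with $k(G/M)=4$ one must show that the unique nontrivial $G$-orbit in $\irr{b_0}$ has $G_\theta=M$ or, in the ${\sf A}_4$ case, $|G_\theta:M|=4$ with nontrivial cocycle, producing exactly $k(b_0)\in\{4,5,11,13\}$ and excluding the value $2$. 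Your moreover-clause deduction (odd-order quotient forces $k(G/M)$ odd, hence $3$, hence $G/M\cong{\sf C}_3$) is correct, though the paper more directly notes that ${\sf C}_3$ is the only odd-order group on the \cite{VV85} list; either way, the final $k(b_0)=7$ still rests on the unexecuted ${\sf C}_3$ case, where one must show every nontrivial $\theta\in\irr{b_0}$ has $G_\theta=M$. In short: right plan, same approach as the paper, but the decisive case analysis is outlined rather than done.
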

\begin{proof}  By Lemma \ref{onlycovering}, the block
$B_0$ is the only block covering $b_0$, so that  $\irr{G/M}\sbs \irr{B_0}$ and $1\leq k(G/M)<5$.
If $G/M=1$, then $k(b_0)=k(B_0)=5$, and we are done. Hence 
 $1<k(G/M)<5$. By \cite{VV85}, $G/M \in \{ {\sf C}_2, {\sf C}_3, {\sf S}_3, {\sf C}_4, {\sf C}_2\times {\sf C}_2, {\sf D}_{10}, {\sf A}_4\}$.
We analyze $k(b_0)$ depending on the isomorphism class of $G/M$.

Suppose that $G/M\cong {\sf C}_2$. Write $\irr{B_0}=\{ 1_G, \lambda, \chi, \psi, \xi \}$, where $\lambda$ lies over $1_M$. If some $1_M\neq \theta \in \irr{b_0}$ is $G$-invariant, then $\theta$ has two extensions, say $\chi$ and $\psi$, both in $\irr{B_0}$ by Lemma \ref{covering}.  Then $\xi_M$ decomposes as a sum of two distinct elements of $\irr{B_0}$ and  $k(b_0)=4$. Otherwise, every character in $\irr{B_0}\setminus \irr{G/M}$ is induced from $M$ and $k(b_0)=7$. 

Suppose  that $G/M\cong {\sf C}_3$, reasoning  as before one can show  that every character in $\irr {B_0}\setminus \irr{G/M}$ must be induced from an irreducible character of $M$, hence $k(b_0)=7$. 

Suppose that $G/M \cong {\sf S}_3$. Let $M\leq K\nor G $ with $|K:M|=3$. Note that $B_0(K)$ is the only block covering $b_0$ and $B_0$ is the only block covering $B_0(K)$ by Lemma \ref{onlycovering}. If some $1_M\neq \theta \in \irr{b_0}$ is $G$-invariant, then $\theta$ has 3 extensions in $B_0(K)$ and at least one of them, namely $\xi$, is $G$-invariant by counting. Hence $\xi$ has two extensions in $B_0$. That leaves no space for characters over the other two extensions of $\theta$ to $K$. Hence, we may assume no nontrivial character in $\irr{b_0}$ is $G$-invariant. Similarly, if some $1_M\neq \theta \in \irr{b_0}$ is $K$-invariant, using the Clifford correspondence, we would obtain 3 distinct characters in $\irr{B_0}$ not containing $M$ in their kernels, absurd. 
Write $\irr{B_0}=\{ 1_G, \lambda, \mu, \chi, \psi\}$, where $\lambda$ and $\mu$ lie over $1_M$. Let $\theta \in \irr {b_0}$ lie under $\chi$ or $\psi$. If $M<G_\theta$, then $|G_\theta: M|=2$, then $\theta$ has 2 extensions  $\xi_i \in \irr{B_0(G_\theta)}$ for $i=1,2$ and, since $B_0$ is the only block covering $b_0$, by the Clifford correspondence  $\xi_i^G\in \irr {B_0} $ for $i=1,2$ are distinct so $\{ \xi_i^G\}=\{ \chi, \psi\}$. In particular, $k(b_0)=2$. In this case $P \cong {\sf C}_2$ which implies $k(B_0)=2$, a contradiction. Hence  $G_\theta= M$, so $\chi_M$ and $\psi_M$ decompose as a sum of $|G/M|$ distinct characters, yielding $k(b_0)=13$.

Otherwise $k(G/M)=4$, and consequently $G/M$ acts transitively on the set of nontrivial characters of $\irr{b_0}$, all of which lie under the only $\chi \in \irr{B_0}$ that does not lie over $1_M$. In particular, by Clifford's theorem 
$k(b_0)=1+|G:G_\theta|$, where $1_M\neq \theta \in \irr{b_0}$. 
If $G_\theta=G$, then $k(b_0)=2$ and $P\cong {\sf C}_2$ contradicting $k(B_0)=5$. 
Reasoning as before, one can check case by case that if $1_M\neq \theta \in \irr{b_0}$, then $G_\theta=M$ or $|G_\theta:M|=4$.
%Indeed, if $|G/M|=4$ and $1_M\neq \theta \in \irr{b_0}$ is such that $M<G_\theta$, then $\theta$ has two extensions $\xi_1$ and $\xi_2$ in $\irr{B_0(G_\theta)}$. By the Fong-Reynolds correspondence  $\xi_i^G \in \irr{B_0}$ for $i=1,2$. This is absurd. 
%Similarly if $G/M\cong {\sf D}_{10}$ and $1_M\neq \theta \in \irr{b_0}$ is such that $M<G_\theta$, then $\theta$ has $|G_\theta: M|$ extensions in the principal block of $G_\theta$ that induce distinct irreducible characters in the principal block of $G$ by the Fong-Reynolds correspondence, a contradiction.
Hence \cite{VV85} implies that
$k(b_0)=1+|G:G_\theta|\in \{4, 5, 11, 13\}$ completing the proof. Note that if $p=2$ then $|G/M|$ is odd and hence $G/M\cong {\sf C}_3$ and $k(b_0)=7$.
\end{proof}

Our proof of Theorem \ref{thmB} uses the principal block case of a celebrated result of Kessar--Malle, whose proof relies on the Classification of Finite Simple Groups. 
%\textcolor{blue}{C: Should we write down these two results, 2.3 and 2.4, or just use them? Here we say we keep 2.4 and remove 2.3. Maybe we can say we make use of ... and also of the not-so-well-known AD theory of isomorphic blocks.}\textcolor{violet}{I think AD must be in here, about KM I am not sure, but is no harm to leave it, I think}

\begin{thm}\label{heightzero}\cite{KM13} Let $B$ be the principal block of $G$ and let $P\in{\rm Syl}_{p}(G)$. 
If $P$ is abelian then $k(B)=k_0(B)$. 
\end{thm}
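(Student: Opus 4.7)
The plan is to prove the principal block case of Brauer's Height Zero Conjecture under the abelian defect hypothesis. Since the defect group of $B_0$ equals a Sylow $p$-subgroup $P$, a character of $B_0$ has height zero precisely when it has $p'$-degree; so the goal reduces to showing that every $\chi \in \Irr(B_0(G))$ satisfies $p \nmid \chi(1)$ whenever $P$ is abelian.

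My first move would be an induction on $|G|$ aimed at reducing to an almost simple (or quasi-simple) configuration. Let $G$ be a minimal counterexample and let $N \nor G$ be a proper nontrivial normal subgroup. For $\chi \in \Irr(B_0(G))$, every irreducible constituent $\theta$ of $\chi_N$ lies in $B_0(N)$ (and Lemma~\ref{lem:blockabove} supplies the converse correspondence), and by the minimality of $G$ every such $\theta$ has $p'$-degree. The next task is to analyze the ramification of $\theta$ in $\chi$ together with the index $[G:G_\theta]$, using that $P$ is abelian to constrain inertia groups and their $p$-local structure; this is the point where a Navarro--Sp\"ath-type reduction for the inductive Alperin--McKay condition would be invoked. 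The aim is to show that $\chi(1)/\theta(1)$ is a $p'$-number, whence $p \nmid \chi(1)$. After this reduction one expects to be left only with almost simple (or quasi-simple modulo a central $p'$-subgroup) configurations.

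For the base case I would invoke the Classification of Finite Simple Groups. Sporadic simple groups can be handled from the ATLAS. For alternating groups, abelian Sylow $p$-subgroups of $\fS_n$ are elementary abelian of restricted shape, and the Murnaghan--Nakayama rule together with hook-length arithmetic yields the claim. For simple groups of Lie type in the defining characteristic, the abelian Sylow hypothesis severely restricts $G$ (essentially to Suzuki/Ree type or rank-one forms such as $\SL_2(q)$), where the structure of $B_0$ is transparent via Deligne--Lusztig theory. In non-defining characteristic $p$, the abelian Sylow condition imposes a cyclotomic constraint on $q$ relative to the order polynomial; here one would appeal to the Brou\'e--Malle--Michel classification of unipotent blocks, combined with Ennola duality, to enumerate $\Irr(B_0)$ and verify the $p'$-degree condition polynomially in $q$.

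The main obstacle is this final verification: matching the abelian Sylow hypothesis with the precise description of $\Irr(B_0)$ uniformly across all families and twisted forms, particularly for the exceptional types where $\Phi_d$-tori and outer automorphisms introduce genuine case-work and where one must control both the unipotent part of $B_0$ and the action of the relevant relative Weyl group.
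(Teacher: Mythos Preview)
The paper does not prove this theorem; it is quoted directly from Kessar--Malle \cite{KM13} and used as a black box. There is thus no argument here to compare your proposal against: you are attempting to reconstruct one direction of Brauer's Height Zero Conjecture, whose full proof is the main content of a long Annals paper and lies well outside the scope of this article.

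That said, your outline has genuine gaps. In the reduction step, after passing to $N\nor G$ and an irreducible constituent $\theta$ of $\chi_N$, you need $\chi(1)/\theta(1)$ to be a $p'$-number; but the Clifford correspondent of $\chi$ in $G_\theta$ need not lie in a principal block, so an inductive hypothesis restricted to principal blocks does not apply. The actual reduction (Berger--Kn\"orr) reduces the abelian-defect direction to \emph{all} blocks of quasi-simple groups, not merely principal ones, precisely for this reason; invoking ``a Navarro--Sp\"ath-type reduction for the inductive Alperin--McKay condition'' does not repair this, since the inductive AM condition concerns height-preserving bijections with the Brauer correspondent rather than the height-zero property itself. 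The simple-group verification is also far harder than your sketch indicates: for groups of Lie type in non-defining characteristic, Kessar--Malle require detailed control of how Lusztig series distribute among $p$-blocks, a separate treatment of quasi-isolated blocks, and substantial case-work for bad primes, none of which is captured by ``appeal to Brou\'e--Malle--Michel and Ennola duality''. Your proposal reads as a plausible table of contents for such a project, but not as a proof.
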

%\begin{proof}
%This is the principal block case of the main result of \cite{KM13}.
%\end{proof}

We will also make use of Alperin-Dade's theory of isomorphic blocks.

\begin{thm}\label{isomblocks}
Suppose that $N$ is a normal subgroup of $G$, with $G/N$ a $p'$-group.
Let $P \in \syl p G$ and assume that $G=N\cent GP$. Then restriction of characters defines
a natural bijection between the irreducible characters of the principals blocks
of $G$ and $N$. 
%In particular, $|{\rm Irr}_{p',\tau}(B_0(G))|=|{\rm Irr}_{p', \tau}(B_0(N))|$,
%for any  $\tau \in {\rm Gal}(\Q^{\rm ab}/\Q)$.
\end{thm}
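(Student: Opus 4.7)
The plan is to derive the statement from the Alperin--Dade theory of block source algebras: under the stated hypotheses, $B_0(G)$ and $B_0(N)$ share the same source algebra as $P$-interior algebras, so they become Morita equivalent in a way implemented concretely by restriction of characters. I would carry this out in two main stages.

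First, setup and $G$-invariance. Since $G/N$ is a $p'$-group, $P\le N$ and $P\in\syl p N$; in particular both principal blocks have defect group $P$, and by Brauer's third main theorem $B_0(G)$ covers $B_0(N)$. The first technical step is to show that every $\theta\in\Irr(B_0(N))$ is $G$-invariant. Since $G=N\cent G P$ and $N$ acts on $\Irr(N)$ by inner automorphisms, it suffices to prove that $\cent G P$ fixes each such $\theta$. One clean route is to pass through the Brauer first main theorem correspondence $B_0(N)\leftrightarrow B_0(\norm N P)$: elements of $\cent G P$ centralize $P$ and normalize $\norm N P$, and using $P\triangleleft\norm N P$ with $\norm N P/P$ a $p'$-group, a short Clifford-theoretic argument shows that the resulting $\cent G P$-action fixes every character of $B_0(\norm N P)$. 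This invariance then transfers back to each $\theta\in\Irr(B_0(N))$ via the Brauer correspondence.

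Second, the bijection via extensions. Once $G$-invariance is established, each $\theta\in\Irr(B_0(N))$ admits an extension $\tilde\theta\in\Irr(G)$; existence is guaranteed by the Alperin--Dade principle that the $\cent G P$-action on the source algebra of $B_0(N)$ is inner, so the projective obstruction in $H^2(G/N,\CC^\times)$ vanishes. By Gallagher's theorem the full set of extensions is $\{\tilde\theta\lambda\mid\lambda\in\Irr(G/N)\}$, and since $G/N$ is a $p'$-group these extensions lie in pairwise distinct blocks of $G$ covering $B_0(N)$; exactly one of them is in $B_0(G)$ by Brauer's third main theorem. The map $\theta\mapsto\tilde\theta$ is then the inverse of restriction, yielding the desired bijection.

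The main obstacle will be the $G$-invariance step, which is the technical heart of Alperin--Dade and is where the hypothesis $G=N\cent G P$ is genuinely used; the remaining steps are formal consequences of Clifford theory and Brauer's third main theorem applied to the $p'$-quotient $G/N$.
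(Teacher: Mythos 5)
The paper does not actually prove this statement; it simply cites Alperin \cite{Alp76} for the case where $G/N$ is solvable and Dade \cite{Dad77} for the general case, treating the result as a known (and deep) theorem.

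Your attempt to reconstruct a proof has two genuine gaps. First, in the $G$-invariance step you propose to show that $\cent GP$ fixes every character of $B_0(\norm N P)$ and then assert that ``this invariance then transfers back to each $\theta\in\Irr(B_0(N))$ via the Brauer correspondence.'' But Brauer's first main theorem is a correspondence of \emph{blocks}, not of individual ordinary characters; it does not transport invariance of characters of the local block $B_0(\norm N P)$ to invariance of characters of $B_0(N)$. Passing from the (easy) local statement to the global one is precisely where the technical heart of Alperin's and Dade's arguments lies, and a Clifford-theoretic computation at the level of $\norm N P$ alone does not discharge it. Second, in the extension step you appeal explicitly to ``the Alperin--Dade principle that the $\cent GP$-action on the source algebra of $B_0(N)$ is inner'' in order to kill the obstruction in $H^2(G/N,\CC^\times)$; but that principle \emph{is} the substance of the theorem you are trying to prove, so at this point the argument becomes circular. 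Unless you intend to reproduce the Glauberman-correspondence machinery of \cite{Alp76} (and Dade's removal of the solvability hypothesis in \cite{Dad77}) in full, the right move — and the one the paper makes — is to cite those references directly.
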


\begin{proof}
The case where $G/N$ is solvable was proved in \cite{Alp76} and the general case
in \cite{Dad77}. %The last part of the statement follows immediately
%since $\tau$ acts on $\irr{B_0}$ (preserving character degrees). 
\end{proof}

%\textcolor{blue}{C: Here the proof should refer to \ref{divisibility2} and \ref{divisibility3} and not to [NST18].}\textcolor{violet}{Done! Also I changed KM13 to \ref{heightzero}}

\begin{proof}[Proof of Theorem \ref{thmB}] %Let $G$ be a minimal counterexample to the statement. 
If $x$ is a $p$-element of $G$, then $B_0(\cent{G}{x})^G$ is defined by \cite[Theorem 4.14]{nbook} and hence, by  \cite[Theorem 5.12]{nbook} and Brauer's third main theorem \cite[Theorem 6.7]{nbook}, we have that 

\begin{equation}\label{eq:k(B)}5=k(B_0)=|\zent G|_p l(B_0)+\sum_{i=1}^k l(B_0({\rm \textbf{C}}_G(x_i))\, ,
\end{equation}
 where $\{x_1,x_2,\ldots,x_k\}$ is a complete set of representatives of the conjugacy classes of non-central $p$-elements of $G$.

{\em Step 1.} \textit{We may assume that $\oh {p'} G=1$}. This is due to the minimality of $G$ as a counterexample and  \cite[Theorem 9.9.(c)]{nbook}.

\medskip

{\em Step 2.} \textit{We may assume that $G$ is not $p$-solvable}. Otherwise, by Step 1 and \cite[Theorem 10.20]{nbook}, $G$ has a unique $p$-block and hence $k(B_0)=k(G)=5$. In this case, by \cite{VV85} we have that $G\in \{ {\sf C}_5, {\sf D}_8, {\sf Q}_8, {\sf D}_{14}, {\sf C }_5\rtimes {\sf C}_4, {\sf C}_7 \rtimes {\sf C}_3, {\sf S}_4\}$
(the prime is clear in each case since $G$ has a unique $p$-block). Then $P\in \{ {\sf C}_5, {\sf C}_7, {\sf D}_8, {\sf Q}_8 \}$, and $G$ is not a counterexample.

\medskip

{\em Step 3.} \textit{We have $1<l(B_0)<5$ and $1\leq k \leq 3$.} If $l(B_0)=1$ or $P\sbs\zent G$ then $G$ has a normal $p$-complement (by \cite[Corollary 6.13]{nbook} or by the Schur-Zassenhaus theorem respectively), contradicting Step 2.%. By Step 1, $G$ is $p$-group with $k(G)=5$. By \cite{VV85} we conclude that $G\in \{ {\sf C}_5,{\sf D}_8,{\sf Q}_8\}$, contradicting the minimality of $G$ as a counterexample. Hence $1<l(B_0)$ and $k\geq 1$, so that Equation \ref{eq:k(B)} forces $k\leq 3$. 

\medskip

{\em Step 4.}  \textit{We may assume that $\zent G=1$.} Indeed, by Step 1, $\zent G$ is a $p$-group. If $\zent G>1$, Equation \ref{eq:k(B)} and Step 3 force $|\zent G|= 2$, $l(B_0)=2$, $k=1$ and $l(B_0(\cent G {x_1}))=1$. Note that $p=2$ in this case. Write $N=\zent G$ and $\bar B_0=B_0(G/N)\subseteq B_0$. Since $N$ is a nontrivial 2-group, we have that $k(\bar B_0)<k(B_0)$. By  \cite[Theorem 9.10]{nbook} we have $l(\overline B_0)=l(B_0)=2$. 

If $x_1 N \in \zent{G/N}$, since $G$ has a unique conjugacy class of non-central 2-elements, then $P/N\sbs \zent {G/N}$. In particular, $P\nor G$ contradicting Step 2. Otherwise $x_1N$ is a non-central $p$-element of $G/N$ and then, applying \cite[Theorem 5.12]{nbook} to $G/N$, we have $2\leq |\zent {G/N}|_2l(\bar B_0)< k(\bar B_0)\leq 4$. Then $k(\bar B_0)\in \{ 3, 4\}$. If $k(\bar B_0)=3$, we have that $p=3$, a contradiction. If $k(\bar B_0)=4$, since $p=2$ we get $|P/N|=4$ by Theorem \ref{smaller4}(d). Hence $|P|=8$ and is non-abelian (if $P$ is abelian we get 4 divides 5 by Theorem \ref{divisibility2} and Theorem \ref{heightzero}), and $G$ is not a counterexample.

%This forces $|\zent {G/N}|_2 =1$. 

%Then $k(\bar B_0)=3$, which
%implies $k_0(\bar B_0)=2$. By Theorem \ref{divisibility2}, we have that $|P/N|=2$, a contradiction with Theorem \ref{smaller4}(b).

%If $|\zent {G/N}|_2=2$, then $|\zent {G/N}|_2=|P/N|=2$, because $G$ has a unique conjugacy class of $p$-elements outside $N$. Then $P\nor G$ contradicting Step 2. 

%If $|\zent {G/N}|_2=1$, then $k(\overline B_0)=2+l(B_0(\cent {G/N}{x_ 1 N}))\in \{ 3, 4\}$.  By Theorem $\heartsuit$, $k(\bar B_0)>3$. Since $l(\bar B_0)=2$, if $k(\bar B_0)=4$ then $p=5$ by \cite[Theorem 1.1]{KS20}, a contradiction.

%If $x_1 N \in \zent{G/N}$, since $G$ has a unique conjugacy class of non-central 2-elements, then $P/N\sbs \zent {G/N}$. In particular, $P\nor G$ contradicting Step 2. Otherwise, $l(B_0(\cent {G/N}{x_ 1 N}))=l(B_0(\cent G {x_1}))=1$ by \cite[Theorem 9.10]{nbook}. 
%By \cite[Theorem 5.12]{nbook}, $ l(\bar B_0)|\zent{G/N}|_2+1=k(\bar B_0)< 5$. This forces $|\zent {G/N}|_2=1$. 

%and  $p=3$ by Theorem \ref{smaller4}, a contradiction.

\medskip

{\em Step 5.} $G$ has a unique minimal normal subgroup $N$.

By Step 1, $p$ divides the order of every minimal normal subgroup of $G$. Since $k\leq 3$ and $\zent G =1$, if $G$ has
more than one minimal normal subgroup, then it has exactly two, namely $N_1$ and $N_2$, and $k=3$. Write $N=N_1\times N_2\nor G$. 
Note that $G$ has no $p$-elements outside $N$, so $P\subseteq N$ and $G/N$ is a $p'$-group. Let $1\neq x_1 \in N_1$ and $1\neq x_2 \in N_2$, be $p$-elements.  By Equation \ref{eq:k(B)} we know that $l(B_0(\cent G{x_i}))=1$ hence the groups $\cent G{x_i}$ have normal $p$-complements by \cite[Corollary 6.13]{nbook}. Since $N_2 \leq \cent G {x_1}$ and $N_1 \leq \cent G{x_2}$, we see that $N_i$ have normal $p$-complements too. By Step 1 this forces $N_i$ to be $p$-groups. Then $P=N\nor G$ and $G$ is $p$-solvable, contradicting Step 2.

%Let $M=N \cent G P \nor G$ by the Frattini argument. 
%By Alperin-Dade we have that $k(B_0(M))=K(B_0(N))=k_1 \cdot k_2$, where $k_i=k(B_0(N_i))>1$ for $i=1,2$.
%If $M=G$ we get $5=k_1\cdot k_2$, a contradiction. Hence $M<G$. By ... $B_0$ is the only block covering $B_0(M)$. By Lemma \ref{tech} we have that $k(B_0(M))\in \{ 4, 5, 7, 11, 13\}$, 
%yielding again a contradiction unless $k_1=2=k_2$. In such a case $p=2$ and $P={\sf C}_2\times {\sf C}_2$. In particular $k(B_0)=k_0(B_0)$ by [KM] is even by [NST], a contradiction.

\medskip

Write $\bar{B_0}=B_0(G/N)$. Since $\bar B_0\subseteq B_0$ 
we have that $1\leq k(\bar B_0)\leq 5$. Moreover $k(\bar B_0)<5$ as otherwise $k(B_0(N))=1$ (contradicting Step 1). We have seen that
$1\leq k(\bar B_0)<5$.

\medskip

{\em Step 6.} $N$ is semisimple of order divisible by $p$.

Otherwise $N$ is abelian and by Step 1 we have that $N$ is an elementary abelian $p$-group. Note that $P/N>1$ as otherwise $G$ would be $p$-solvable contradicting Step 2. Hence, in this case, $1<k(\bar B_0)<5$.

Also by Step 1, we have that $\oh{p'} {\cent G N}=1$, and by Step 4, we have that $\cent G N <G$. Suppose that $\cent P N=\cent G N \cap P=N$. Then $N \in \syl p {\cent G N}$, and $\cent G N$ has a normal $p$-complement, forcing $\cent G N =N$. By \cite[Theorem 9.21]{nbook} $B_0$ is the only block covering $B_0(N)$. In particular, we have that $k(G/N)<5$. By the classification in \cite{VV85},  $G/N$ is solvable, so is $G$, a contradiction with Step 2. 
Hence $N<\cent P N$. We study now the different values of $k(\bar B_0)$.

\smallskip

 If $k(\bar{B}_0)=2$, then $p=2$ and $P/N\cong {\sf C}_2$ by Theorem \ref{smaller4}(b). It is well-known that a group with a cyclic Sylow 2-subgroup has a normal 2-complement (see Corollary 5.14 of \cite{isbook}, for instance). Then $G/N$ has a normal $p$-complement and therefore $G$ is $p$-solvable contradicting Step 2.
 
\medskip

If $k(\bar{B}_0)=3$, then $p=3$ and  $P/N\cong {\sf C}_3$ by Theorem \ref{smaller4}(c). Then  $P=\cent P N$, so that $N\subseteq \zent P$ and $P$ is abelian. By Theorem \ref{heightzero}, $k_0(B_0)=k(B_0)=5$. By Theorem \ref{divisibility3}, 3 divides $k_0(B_0))=5$, a contradiction.

\medskip

If $k(\bar{B}_0)=4$ then, by Theorem \ref{smaller4}(d), either $p=2$ and  $|P/N|=4$ 
%(note that if $l(\bar B_0)=1$, then $G/N$ has a normal $p$-complement and $k(\bar B_0)=k(P/N)=4$)
or $p=5$ and $|P/N|=5$. Write $M=\cent G N$. Recall that $N<M\cap P\leq P$.

\smallskip

\noindent{$\bullet$} Suppose that $|P/N|=4$ (so $p=2$).

If  $|M\cap P:N|=2$, then $|G/M|_2=|P:M\cap P|_2=|M:N|_2=2$. Hence $G/M$ and $M/N$ have normal $p$-complements, and $G$ is $p$-solvable contradicting Step 2. 

It remains to consider the situation where $P\subseteq M$. In this case $P\cent G P \subseteq M$. 
By Lemma \ref{tech},  $k(B_0(M))=7\geq|\zent M|_pl(B_0(M))$ (using Theorem 5.12 of \cite{nbook}). In particular, since $N\sbs\zent M$, we have that $|N|$ divides 4. If $|N|=2$, then $|P|=8$ contradicting the minimality of $G$ as a counterexample (again $P$ is nonabelian as otherwise Theorem \ref{heightzero} and Theorem \ref{divisibility2} imply 4 divides 5). Otherwise $|N|=4$ and $l(B_0(M))=1$. Then $M$ has a normal 2-complement. By Step 1 this forces $M=P\nor G$, a contradiction with Step 2.

\smallskip

\noindent{$\bullet$} Suppose that $|P/N|=5$ (so $p=5$). Then $P\subseteq \cent G N=M$, so that Lemma \ref{tech} applies to $M\nor G$. By Lemma \ref{tech} and Theorem 5.12 of \cite{nbook}
$13\geq k(B_0(M))\geq |\zent M|_p l(B_0(M))$. Since $N\subseteq \zent M$, these inequalities force $N\cong {\sf C}_5$. Then $G/M\leq \aut (N) \cong {\sf C}_4$.  From the proof Lemma \ref{tech} one can actually conclude that $5\leq k(B_0(M)) \in \{4, 5, 7\}$, so that $l(B_0(M))=1$. Yielding  $M=P \nor G$, a contradiction with Step 2.

\medskip

{\em Step 7.} $N$ is (nonabelian) simple of order divisible by $p$. Moreover either $G/N$ is a $p'$-group or $k(\bar B_0)=4$.

By Step 6 we have that $N$ is semisimple. Write  $N=S_1\times S_2\times\cdots\times S_t$, where $S_i\cong S$ and $S$ is a nonabelian simple with $p\mid |S|$. 

\smallskip

 If $|G/N|$ is not divisible by $p$, then $P\subseteq N$. Let $M=N\cent G P\nor G$ by the Frattini argument. By Theorem \ref{isomblocks}, we have that $k(B_0(M))=k(B_0(N))=k(B_0(S))^t$.  By Lemma \ref{tech}, the equality $k(B_0(M))=k(B_0(S))^t$ with $t>1$ yields a contradiction unless $t=2$ and $k(B_0(S))=2$, which is absurd as $S$ is nonabelian simple.

\medskip

If $|G/N|$ divisible by $p$, then $1<k(\bar B_0)< 5$. We again study the different values of $k(\bar B_0)$.

Suppose that $k(\bar B_0)=2$. Then $p=2$ and  $|PN/N|=2$ by Theorem \ref{smaller4}(b). In particular, $G/N$ has a normal 2-complement $X/N$. By  \cite[Corollary 9.6]{nbook}, $B_0$ is the only block covering $B_0(X)$ and hence ${\rm Irr}(G/X)\sbs{\rm Irr}(B_0)$. Let $1_X\neq \tau\in{\rm Irr}(B_0(X))$. If $\tau$ extends to $G$ then it has two extensions, all in $B_0$ by Lemma \ref{covering}. Therefore $k(B_0(X))=4$. Since $p=2$ and $P\cap N=P\cap X\in{\rm Syl}_p(X)$, by Theorem \ref{smaller4}(d) we have $|P\cap N|=4$ and hence $|P|=8$ (note that $P$ is nonabelian as otherwise Theorem \ref{heightzero} and Theorem \ref{divisibility2} imply 4 divides 5). This contradicts the minimality of $G$ as a counterexample. Hence we may assume that $G_\tau=X$ for all non-trivial $\tau\in{\rm Irr}(B_0(X))$ and therefore 2 divides $\chi(1)$ for all $\chi\in{\rm Irr}(B_0)\setminus{\rm Irr}(G/X)$. Hence $k_0(B_0)=2$ and Theorem \ref{divisibility2} implies that $|G|_2=2$, a contradiction by Theorem \ref{smaller4}(b).

Suppose that $k(\bar B_0))=3$. Then $p=3$ and $|PN/N|=3$ by Theorem \ref{smaller4}(c). By Theorem \ref{divisibility3}, 3 divides $k_0(B_0)\leq 5$, and hence $3=k_0(B_0)$. By  Theorem \ref{divisibility3},  this implies that $|G|_3=3$. As $|PN/N|=3$, this forces $P\cap N=1$, a contradiction.

Suppose that $k(\bar B_0)=4$. Then all nontrivial irreducible characters in ${\rm Irr}(B_0(N))$ lie under the same irreducible character of ${\rm Irr}(B_0)$ and hence are $G$-conjugate. This forces $t=1$.
% If $t>1$, then, if $\theta\in{\rm Irr}(B_0(S))$ non-trivial, $\theta\times\theta\times\cdots\times\theta$ is not $G$-conjugate to $\theta\times 1\times\cdots\times 1$, and we are done. Hence we may assume that

\smallskip

By Steps 5 and 7, the unique minimal normal subgroup $N$ of $G$ is nonabelian simple and $\cent G N=1$. Hence $G$ is almost simple and $N={\rm soc}(G)$. Note that in the case where $G/N$ is a $p'$-group, we have that $N \cent G P <G$ by Theorem \ref{isomblocks} together with the minimality of $G$ as a counterexample. Otherwise $k(\bar B_0)=4$ by Step 7.

\smallskip

Write $S={\rm soc}(G)$.  We have seen that $G/S$ is either a nontrivial $p'$-group or its principal $p$-block $\bar B_0$ has exactly 4 irreducible characters.
Using \cite{GAP} and the  GAP Character Table Library, one can check that if $S$ is a sporadic group or a simple alternating group ${\sf A}_n$ with $n\leq 6$, then $G$ is not a counterexample to the statement of Theorem A.
Suppose that $S\cong {\sf A}_n$ is a simple alternating group with $n> 6$.  In such cases $|G:S|\leq 2$. In particular $k(\bar B_0)\neq 4$ and
hence $|G:S|=2$ as $S<G$. Then $G\cong {\sf S}_n$ and $p$ is odd. In this case $P\in \syl p S$. By \cite[Proposition 4.10]{Ols90}, we have that
$k(B_0(S))\leq k(B_0)=5$. By minimality of $G$ as a counterexample, we actually have that $k(B_0(S))<5$. By Theorem \ref{smaller4}, either
$p=3$ and $P\cong {\sf C}_3$, which is absurd as $n>6$, or $p=5$ and $P\cong{\sf C}_5$, contradicting the choice of $G$ as a counterexample.
\end{proof}

\qquad

\section{Simple Groups of Lie Type}\label{sec:lie}
\noindent In this section, we prove that finite almost simple groups with socle a group of Lie type do not provide counterexamples to the statement of Theorem A. 
In view of Theorem \ref{thmB}, that will complete the proof of Theorem A.

\subsection{Preliminaries}
First, we check that the almost simple groups extending several small groups of Lie type satisfy Theorem \ref{thmA}.

\begin{prop}\label{prop:sporadics}
Let $G$ be an almost simple group such that its simple socle $S$ is $\tw{2}\type{F}_4(2)'$,  $\type{F}_4(2)$, $\type{G}_2(3)$,  $\type{G}_2(4)$,  $\PSL_2(8)\cong \tw{2}\type{G}_2(3)'$, $\type{B}_3(3)$, $\tw{2}\type{B}_2(8)$, $\tw{2}\type{E}_6(2)$, $\Sp_4(4)$, $\Sp_6(2)$, $\type{D}_4(2)$, $\PSU_6(2)$, $\PSL_3(2)$, $\PSL_3(4)$, $\PSp_4(3)\cong \PSU_4(2)$, $\PSU_4(3)$, or $\PSL_2(7)$.  Then Theorem \ref{thmA} holds for $G$ for all primes $p$ dividing  $|G|$.  Further, Theorem \ref{thmA} holds for $G$ if $p=2$ and $S$ is one of  $\Sp_4(8)$ or    $\PSU_5(4)$ and for the prime $p=3$ if $S$ is $\type{D}_4(3)$.  
\end{prop}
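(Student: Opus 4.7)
The plan is to verify the statement by direct computation using \cite{GAP} together with the GAP Character Table Library, which contains the ordinary and modular character tables of all the listed simple groups $S$ as well as of their almost simple extensions $S\leq G\leq \aut(S)$. The strategy parallels the treatment of sporadic groups and small alternating groups already invoked in the final paragraph of the proof of Theorem~\ref{thmB}.

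First, for each simple group $S$ in the list I would enumerate the almost simple groups $G$ with ${\rm soc}(G)=S$; these are all readable from the library (or constructible as extensions of the ordinary character table of $S$ by $\Out(S)$). For each such $G$ and each prime $p$ dividing $|G|$, the command \texttt{PrimeBlocks} applied to the (ordinary) character table of $G$ returns the block distribution of $\irr{G}$; by Brauer's third main theorem the principal block $B_0(G)$ is the one containing $1_G$. Counting the irreducible characters in that block yields $k(B_0(G))$. If $k(B_0(G))\neq 5$, there is nothing to check. If $k(B_0(G))=5$, one reads off $|G|_p$ from the character table and verifies that a Sylow $p$-subgroup of $G$ is isomorphic to one of $\cyc{C}_5$, $\cyc{C}_7$, $\cyc{D}_8$, $\cyc{Q}_8$; this is trivial when $|G|_p\in\{5,7\}$, and for $|G|_p=8$ the isomorphism type of $P\in\syl p G$ can be recovered from the structure of the small subgroups of $G$ recorded in the library, or directly from the $2$-local information in $S$ (e.g.\ via the classification of $2$-groups of order $8$ appearing as Sylow subgroups in these specific simple groups, which is well documented in the ATLAS).

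The restricted cases are handled in exactly the same way, the restriction on $p$ simply reflecting the scope in which the full character-theoretic data is needed here: for $S\in\{\Sp_4(8),\PSU_5(4)\}$ only the prime $p=2$ is asserted (the odd primes for these groups will be dealt with by the generic arguments of the following subsections on groups of Lie type in the defining/non-defining characteristic), and similarly only $p=3$ is asserted for $\type{D}_4(3)$. For these cases the principal $p$-block and its size are again extracted directly from the library, and the Sylow $p$-subgroup is verified to fail the list $\{\cyc{C}_5,\cyc{C}_7,\cyc{D}_8,\cyc{Q}_8\}$, so in fact one expects $k(B_0(G))\neq 5$ in all of these cases.

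The main obstacle is purely computational: for the largest groups on the list, notably $\tw{2}\type{E}_6(2)$, $\type{F}_4(2)$, $\type{D}_4(3)$ and $\type{E}_6$-related extensions, the character tables are very large and some Brauer tables (for $l(B_0)$ computations, should they be needed) may be incomplete in the library. However, since the statement of Theorem~\ref{thmA} only requires knowledge of $k(B_0)$ and of $\syl p G$, the ordinary character table together with \texttt{PrimeBlocks} suffices, and this data is available for every group in the list. The verification is therefore reduced to a finite check that can be carried out group-by-group, prime-by-prime.
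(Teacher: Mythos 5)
Your proposal is correct and follows essentially the same approach as the paper, whose entire proof is the one-line observation that the claim can be verified in GAP. You spell out the mechanics (enumerating the almost simple extensions, applying \texttt{PrimeBlocks} to the ordinary character table, reading off $k(B_0)$ and the Sylow $p$-subgroup order/isomorphism type), but the substance is the same finite computational check.
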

\begin{proof}
This can be seen using  \cite{GAP}.
\end{proof}

Now, given an almost simple group $G$ with socle $S$, with Lemma \ref{lem:blockabove} in mind, our strategy for proving Theorem \ref{thmA} in most cases will be to demonstrate at least $6$ non-$\aut(S)$-conjugate members of $B_0(S)$.  For $N\lhd G$, we write $k_G(B_0(N))$ for the number of distinct $G$-classes of characters in $B_0(N)$.  
Note that $k(B_0(G))\geq k_G(B_0(N))$, using Clifford theory and Lemma \ref{lem:blockabove}.

We next address some immediate cases that further reduce the situation, when combined with Theorem \ref{thmB}.

\begin{lem}\label{lem:cyclic}
Let $G$ be a finite group and $p$ a prime dividing $|G|$ such that $P\in \Syl_p(G)$ is cyclic.  Then $G$ is not a counterexample to Theorem \ref{thmA} for the prime $p$. 
\end{lem}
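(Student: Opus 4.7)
The plan is to invoke the Brauer--Dade theory of blocks with cyclic defect group. If $k(B_0(G))\neq 5$ then $G$ is trivially not a counterexample to Theorem~\ref{thmA}, so we may assume $k(B_0(G))=5$. The cyclic Sylow $p$-subgroup $P$ is then the defect group of $B_0:=B_0(G)$, and Dade's theorem on cyclic blocks provides the explicit formula
\[
k(B_0)=e+\frac{|P|-1}{e},
\]
where $e=l(B_0)$ is the inertial index of $B_0$ (a convenient textbook account is Chapter~11 of \cite{nbook}).

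Writing $|P|=p^n$, the task therefore reduces to classifying the positive integer solutions of $e+(p^n-1)/e=5$ with $p$ prime and $n\geq 1$. Since $P>1$ forces $(p^n-1)/e\geq 1$, we must have $e\leq 4$; a direct check of the four values $e\in\{1,2,3,4\}$ shows that either $p^n=5$ (for $e=1$ or $e=4$) or $p^n=7$ (for $e=2$ or $e=3$). In every case $P$ is cyclic of prime order, so $P\in\{{\sf C}_5,{\sf C}_7\}$, which is contained in the list of allowed Sylow $p$-subgroups in the conclusion of Theorem~\ref{thmA}.

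There is essentially no substantive obstacle here: once Dade's formula is in hand, the statement reduces to a brief arithmetic case analysis on a single equation in two unknowns. The only minor point to record is the exclusion of $e\geq 5$, which is automatic because $(p^n-1)/e\geq 1$ forces $e+(p^n-1)/e\geq e+1>5$ whenever $e\geq 5$. No divisibility condition on $e$ is actually needed to complete the argument; the arithmetic alone suffices to pin down the two admissible isomorphism types of $P$.
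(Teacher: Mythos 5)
Your proof is correct and follows essentially the same route as the paper: both invoke Dade's formula $k(B_0)=e+(|P|-1)/e$ for blocks with cyclic defect group (the paper cites \cite[Theorem 5.1.2(ii)]{cravenbook} with $e=|\norm{G}{P}:\cent{G}{P}|$, which agrees with your identification $e=l(B_0)$ in the cyclic case) and then a short arithmetic case analysis. The paper organizes the casework by the size of $|P|$ (showing $|P|\geq 8$ gives $k(B_0)>5$, $|P|\leq 4$ gives $k(B_0)\leq 4$, and $|P|\in\{5,7\}$ is allowed), whereas you split on $e\in\{1,2,3,4\}$ and solve for $p^n$ directly; this is a cosmetic difference only.
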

\begin{proof}
Write $B_0:=B_0(G)$. According to \cite[Theorem 5.1.2(ii)]{cravenbook}, we have 
\[k(B_0)=e+\frac{|P|-1}{e}\] where $e:=|\norm{G}{P}:\cent{G}{P}|$. From this we see that if $|P|\geq 8$, then $k(B_0)>5$, and if $|P|\leq 4$, then  $k(B_0)\leq 4$.  In either of these cases, $G$ is then not a counterexample to Theorem \ref{thmA}.  If $|P|\in\{5,7\}$, then $P\in \{\cyc{C}_5, \cyc{C}_7\}$, and hence $G$ is again not a counterexample to Theorem \ref{thmA}, completing the claim.
\end{proof}

\begin{lem}\label{lem:quot4}
Assume that $G$ is almost simple with socle $S$ and that $k(B_0(G/S))=4$.  If $k_G(B_0(S))\geq 3$, then $G$ is not a counterexample to Theorem \ref{thmA}. 
\end{lem}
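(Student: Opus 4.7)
My plan is to show directly that $k(B_0(G)) \geq 6$, which immediately implies that $G$ cannot be a counterexample to Theorem \ref{thmA} (the hypothesis $k(B_0(G))=5$ simply fails). Write $B_0:=B_0(G)$ and $\bar B_0 := B_0(G/S)$. The first step is to extract $4$ characters of $B_0$ from the assumption $k(\bar B_0)=4$. This uses the standard fact that inflation identifies $\Irr(\bar B_0)$ with a subset of $\Irr(B_0)$ (the principal block of a quotient always inflates into the principal block), so the four characters of $\bar B_0$, viewed as characters of $G$, give four distinct elements of $\Irr(B_0)$, all of which contain $S$ in their kernel.

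The second step is to produce at least two further characters of $B_0$ that do not contain $S$ in their kernel. For each $\theta \in \Irr(B_0(S))$ I would invoke Lemma \ref{lem:blockabove} to obtain some $\chi_\theta \in \Irr(B_0)$ lying over $\theta$. By Clifford's theorem, the set of $S$-constituents of any irreducible character of $G$ forms a single $G$-orbit in $\Irr(S)$, and so if $\theta_1,\theta_2 \in \Irr(B_0(S))$ lie in distinct $G$-orbits, then $\chi_{\theta_1} \neq \chi_{\theta_2}$. Since the trivial character of $S$ is alone in its $G$-orbit, the hypothesis $k_G(B_0(S))\geq 3$ provides at least two distinct $G$-orbits of nontrivial characters in $\Irr(B_0(S))$, hence at least two distinct members of $\Irr(B_0)$ not trivial on $S$.

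Combining the two steps gives $k(B_0) \geq 4 + 2 = 6 > 5$, and therefore Theorem \ref{thmA} holds vacuously for $G$. There is no real obstacle; the only point that requires a brief mention is the inflation of the principal block of $G/S$ into $B_0(G)$, which is a routine feature of the principal block and is already implicit in the use made of quotients in Section \ref{sec:reduction}.
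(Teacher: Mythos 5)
Your proof is correct and takes essentially the same route as the paper: inflate the four characters of $B_0(G/S)$ into $B_0(G)$, use Lemma \ref{lem:blockabove} together with Clifford theory to produce two further characters of $B_0(G)$ nontrivial on $S$ from the two nontrivial $G$-orbits in $\Irr(B_0(S))$, and conclude $k(B_0(G))\geq 6$.
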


\begin{proof}
Note that by assumption, $B_0(G)$ contains at least 2  characters that are nontrivial on $S$, using Lemma \ref{lem:blockabove}.  But $k(B_0(G/S))=4$ implies there are 4 additional characters in $B_0(G)$ that are trivial on $S$, since $B_0(G/S)\subseteq B_0(G)$.  Hence $k(B_0(G))\geq 6$.
\end{proof}

%

%!TEX root = k(B)v7_revised.tex

\subsection{Additional Notation}
To ease our notation, we will switch to using $A$ for the remainder of the paper to denote an almost simple group.  We will let $G$ be a quasisimple group of Lie type with $S=G/\zent{G}$, where $\zent G$ is a non-exceptional Schur multiplier for $S$ and $G$ is defined over $\mathbb{F}_q$, where $q$ is a power of the prime $q_0$.  (Note that with Proposition \ref{prop:sporadics} and Theorem \ref{thmB}, we have already completed the case that $S$ has an exceptional Schur multiplier.  See e.g. \cite[Table 6.1.3]{GLS3} for the list of simple groups of Lie type with exceptional Schur multipliers.)  Then we may assume that $G$ is the set of fixed points $\bG^F$ of a simple, simply connected algebraic group $\bG$ defined over $\overline{\mathbb{F}}_q$, under a Steinberg endomorphism $F$.

Throughout, for $\epsilon\in\{\pm1\}$ (or simply $\{\pm\}$), the notation $\PSL_n^\epsilon(q)$ will denote $\PSL_n(q)$ for $\epsilon=1$ and $\PSU_n(q)$ for $\epsilon=-1$.  Similarly, $ \operatorname{P\Omega}_{2n}^\epsilon(q)$ will denote the simple group $\operatorname{P\Omega}_{2n}(q)$ of type $\type{D}_n(q)$ for $\epsilon=1$ and its twisted counterpart $\operatorname{P\Omega}_{2n}^-(q)$ of type $\tw{2}\type{D}_n(q)$ for $\epsilon=-1$.  An analogous meaning will be used for related groups such as $\GL_n^\epsilon(q)$, $\SL_n^\epsilon(q)$, $\GO_{2n}^\epsilon(q)$, and $\SO_{2n}^\epsilon(q)$ and for the groups $\type{E}_6^\epsilon(q)$ representing $\type{E}_6(q)$ and $\tw{2}\type{E}_6(q)$.

\subsection{Defining Characteristic}

In this section, we will assume $q_0=p$, so that $S$ is defined in the same characteristic as the blocks under consideration.  In this case, using \cite[Theorem 3.3]{cabanes17}, we see that there are exactly two $p$-blocks for $S$: $B_0(S)$ and a defect-zero block containing the Steinberg character.  
With this in mind, we have $k(B_0(S))=k(S)-1$. Combining with Lemma \ref{lem:blockabove}, for $S\leq A \leq \aut(S)$, it suffices to show $\frac{k(S)-1}{|A/S|}\geq 6$ when $P\in\Syl_p(A)$ is not one of $\{ {\sf C}_5, {\sf C}_7, {\sf D}_8, {\sf Q}_8 \}$.  Now, note further that $k(S)\geq k(G)/|\zent{G}|$,
giving a rough bound 
\begin{equation}\label{eq:crudebound}
k(B_0(A))\geq k_A(B_0(S))\geq \frac{k(G)-|\zent{G}|}{|\zent{G}||\out{(S)}|.}
\end{equation}

Further, \cite[Theorem 3.7.6]{carter} implies that the number of semisimple classes of $G$ is $|(\zent{\bg{G}}^\circ)^F|q^r$, where $r$ is the rank of $G$, which forces $k(G)>q^r$ since $G$ also contains non-semisimple classes.  
Combining this with \eqref{eq:crudebound}, we see that
\begin{equation}\label{eq:lesscrudebound}
k(B_0(A))>\frac{q^r-|\zent{G}|}{|\zent{G}||\out(S)|}.
\end{equation}

%\begin{lem}\label{lem:defincharblocks}\cite[Theorem 3.3]{cabanes17}
%Let $S$ be a simple group of Lie type as above, defined in characteristic $p$.  Then $S$ has exactly two $p$-blocks: $B_0(S)$ and a defect-zero block containing the Steinberg character.
%\end{lem}
%
%
%Thanks to Lemma \ref{lem:defincharblocks}, we have $k(B_0(S))=k(S)-1$. Combining with Lemma \ref{lem:blockabove}, for $S\leq A \leq \aut(S)$, it suffices to show $\frac{k(S)-1}{|A/S|}\geq 6$ when $P\in\Syl_p(A)$ is not one of $\{ {\sf C}_5, {\sf C}_7, {\sf D}_8, {\sf Q}_8 \}$.  Now, note further that $k(S)\geq k(G)/|\zent{G}|$,
%% ///need ref?///
% so we have a rough bound 
%\begin{equation}\label{eq:crudebound}
%k(B_0(A))\geq k_A(B_0(S))\geq \frac{k(G)-|\zent{G}|}{|\zent{G}||\out{(S)}|.}\end{equation}
%
%We also have the following:
%
%\begin{lem}\label{lem:boundkGdefining}
%Let $H=\bg{H}^F$ be a group of Lie type defined over $\mathbb{F}_q$ such that $\bg{H}$ is connected reductive of semisimple rank $r$, $q$ is a power of the prime $p$, and $p\mid |H|$. Then $k(H)>|{\mathbf{Z}^\circ(\bg{H})}^F|q^r\geq q^r$.
%\end{lem}
%\begin{proof}
%By \cite[Theorem 3.7.6]{carter}, the number of semisimple classes of $H$ is $|{\mathbf{Z}^\circ(\bg{H})}^F|q^r$, so the statement follows since $p\mid |H|$ implies there are non-semisimple classes.
%\end{proof}
%
%Combining \eqref{eq:crudebound} and Lemma \ref{lem:boundkGdefining}, we see that in our situation,
%\begin{equation}\label{eq:lesscrudebound}
%k(B_0(A))>\frac{q^r-|\zent{G}|}{|\zent{G}||\out(S)|},
%\end{equation}
%where $G$ has rank $r$.

\begin{prop}\label{prop:definingchar}
Let $S$ be a simple group of Lie type defined in characteristic $p$ such that $S$  is not isomorphic to an alternating group nor one of the groups treated in Proposition \ref{prop:sporadics}. Then   $k(B_0(A))\geq 6$ for any $S\leq A\leq \aut(S)$, where $B_0(A)$ is the principal $p$-block of $A$.
\end{prop}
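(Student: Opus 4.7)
My plan is to exploit the fact that in defining characteristic, by \cite[Theorem 3.3]{cabanes17}, the only $p$-blocks of $S$ are the principal block $B_0(S)$ and the defect-zero block containing the Steinberg character; hence $k(B_0(S))=k(S)-1$. Combined with Lemma~\ref{lem:blockabove} and Clifford theory, this yields the inequality \eqref{eq:lesscrudebound} already recorded in the excerpt, namely
\[
k(B_0(A))>\frac{q^r-|\zent{G}|}{|\zent{G}|\cdot|\out(S)|}.
\]
So the task reduces to verifying that the right-hand side is at least $6$ in all cases not already covered by Proposition~\ref{prop:sporadics}.

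First I would organize the analysis by Lie type and use the standard decomposition $|\out(S)|=d\cdot f\cdot g$, where $d\leq|\zent{G}|$ accounts for diagonal automorphisms, $f$ equals $\log_{q_0}q$ (doubled in twisted types) and accounts for field automorphisms, and $g\in\{1,2,6\}$ accounts for graph automorphisms. Both $g$ and $|\zent{G}|$ are bounded by small constants depending only on the Lie type (at most $\gcd(n,q\mp 1)$ for $\type{A}_{n-1}^{\pm}$ and at most $4$ otherwise). Since $q^r$ grows exponentially in $r$ while the denominator grows only logarithmically in $q$, the bound \eqref{eq:lesscrudebound} comfortably exceeds $6$ for all exceptional types with $q$ not minimal and for all classical types once $r\geq 3$ and $q\geq 3$; the finitely many exceptional cases with minimal $q$ appear in Proposition~\ref{prop:sporadics}.

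The main obstacle lies in the small-rank classical cases $\PSL_2(q)$, $\PSL_3^{\pm}(q)$, $\PSp_4(q)$, $\PSU_4(q)$, together with the very low-rank Suzuki and Ree families $\tw{2}\type{B}_2(q)$ and $\tw{2}\type{G}_2(q)$, where $f=\log_{q_0}q$ can be comparable with $q^r$. In this regime I would replace the crude bound by the exact value of $k(S)$, available from standard character-theoretic formulas (e.g.\ $k(\PSL_2(q))=(q+5)/2$ for $q$ odd and $q+1$ for $q$ even), and count $\aut(S)$-orbits on $\irr{S}$ more carefully. A productive approach is to single out characters fixed by all of $\aut(S)$---the trivial character plus a controlled supply of unipotent characters always suffice here---and combine them with $\aut(S)$-orbits of short semisimple characters to exhibit at least $6$ distinct $A$-orbits inside $B_0(S)$. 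The finitely many pairs $(S,q)$ that escape this finer analysis are precisely those listed in Proposition~\ref{prop:sporadics}, which completes the argument.
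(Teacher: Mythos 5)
Your proposal follows essentially the same strategy as the paper's proof: you start from Cabanes' theorem that $B_0(S)$ contains all of $\Irr(S)$ except the Steinberg character, invoke the crude semisimple-class lower bound \eqref{eq:lesscrudebound}, and then identify the low-rank classical cases where the bound fails. The paper's execution differs only in small tactical choices: it reads off $k(G)$ from L\"ubeck's tables for exceptional types and tabulates explicit bounds on $|\zent{G}|$ and $|\out(S)|$ (rather than your $d\cdot f\cdot g$ factorization), and for the surviving cases $\PSL_2(q)$ through $\PSL_4^\epsilon(q)$ it counts $\aut(S)$-orbits of non-unipotent characters directly from the known character tables. One caution on your sketch: in defining characteristic the principal block of $\PSL_2(q)$ contains exactly one unipotent character (the trivial one, since Steinberg is the defect-zero block), so your ``controlled supply of unipotent characters'' yields nothing extra there; the work must come from counting field-automorphism orbits on the $q\pm1$-degree series and using the two half-discrete-series characters, which is precisely what the paper does. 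Keeping that in mind, the plan is sound and would reach the same finite list of exceptional pairs handled by Proposition~\ref{prop:sporadics}.
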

\begin{proof}
Recall that our assumptions assure $S=G/\zent{G}$, where $G$ is a group of Lie type in characteristic $p$ whose underlying algebraic group is simple and simply connected and $\zent{G}$ is a nonexceptional Schur multiplier for $S$.

First,  \cite{lubeckwebsite} contains the explicit values of $k(G)$ for $G$ of exceptional type $\tw{2}\type{B}_2(q)$, $\tw{2}\type{G}_2(q)$, $\type{G}_2(q),$ $\tw{2}\type{F}_4(q)$, $\type{F}_4(q)$, $\tw{3}\type{D}_4(q)$, $\tw{2}\type{E}_6(q)$, $\type{E}_6(q)$, $\type{E}_7(q)$, and $\type{E}_8(q)$.  Using the bound in \eqref{eq:crudebound}, together with this information and the knowledge of $|\zent{G}|$ and $|\out(S)|$ in each case, we see that the statement holds for these groups.    

We may therefore assume that $G$ is of classical type.  Let $q=p^a$. Table \ref{tab:boundsClassicalDefining} gives upper bounds on $|\zent{G}|$ and $|\out(S)|$ in each case.  
\begin{table}\caption{}
\tiny
\begin{tabular}{|c|c|c|}
\hline
$S$ & Upper Bound on $|\zent{G}|$ & Upper Bound on $|\out(S)|$\\
\hline\hline
$\PSL_n^\epsilon(q)$ & $p^a-\epsilon$ & $2(p^a-\epsilon)a$\\
\hline
$\operatorname{P\Omega}_{2n}^\epsilon(q)$  & 4 & $8a$\\
with $n\geq 4$, $q$ odd, and $(n,\epsilon)\neq (4,+1)$ & & \\
\hline
$\operatorname{P\Omega}_{2n}^\epsilon(q)$  & 1 & $2a$\\
with $n\geq 4$, $p=2$, and $(n,\epsilon)\neq (4,+1)$ & & \\
\hline
$\operatorname{P\Omega}_8(q)$, $q$ odd & 4 & $24a$\\
\hline
$\operatorname{P\Omega}_8(q)$, $p=2$ & 1 & $6a$\\
\hline
$\PSp_{2n}(q)$ or $\operatorname{P\Omega}_{2n+1}(q)$ & 2 & $2a$ \\
with $n\geq 2$ and $(n,p)\neq (2,2)$ & & \\
\hline
$\Sp_{4}(2^a)$  & 2 & $4a$ \\
\hline
\end{tabular}\label{tab:boundsClassicalDefining}

\end{table}
We see using these bounds and \eqref{eq:lesscrudebound} that $k(B_0(A))\geq 6$ except possibly in the cases 
%$\PSp_4(3)$, 
$\PSp_4(5)$, $\Sp_8(2)$, $\operatorname{P\Omega}_8^-(3)$, $\PSU_5(2)$, or $\PSL_n^\epsilon(q)$ with $n\leq 4$.
(Recall that $S$ is not as in Proposition \ref{prop:sporadics}.) It can be readily checked in GAP that $k(B_0(S))\geq 6$ if $S$ is one of the first four in this list.  Further,  using the character tables in  \cite{chevie}, we see that, since the cases with exceptional Schur multiplier are omitted here, the simple groups $\PSL_3^\epsilon(q)$ and $\PSL_4^\epsilon(q)$ under consideration have at least 6 distinct character degrees other than the Steinberg character, showing that $k_A(B_0(S))\geq 6$ in this case when combined with Lemma \ref{lem:quotientblock} and recalling that $k(B_0(S))=k(S)-1$.% and \ref{lem:defincharblocks}.  

So, we now assume $S=\PSL_2(q)$. Then $\out(S)$ is generated by the action of $\PGL_2(q)$ and the field automorphisms. The character tables for $\PSL_2(q)$, $\SL_2(q)$, and $\PGL_2(q)$ are well-known.  First let $q$ be odd and write $q\equiv \eta\pmod 4$ with $\eta\in\{\pm1\}$.  Then we have two characters of degree $\frac{q+\eta}{2}$ that are switched by the action of $\PGL_2(q)$ and fixed by the field automorphisms. For $\eta=1$, there are $\frac{q-5}{4}$ characters of degree $q+1$ and $\frac{q-1}{4}$ characters of degree $q-1$.  For $\eta=-1$, there are $\frac{q-3}{4}$ each of characters of degree $q-1$ and $q+1$.  These  characters are fixed by the action of $\PGL_2(q)$ but permuted by the field automorphisms.  Recall that $q=p^a$, so the size of $\out (S)$ is $2a$.  Then in the case that $p$ is odd, this yields $k_{\aut(S)}(B_0(S))\geq 2+\frac{q-3}{2a}=2+\frac{p^a-3}{2a}$. This is larger than 5 for $q\geq 11$.  Then by our assumption that $S$ is not solvable, alternating, or as in Proposition \ref{prop:sporadics}, we are done in this case.

Finally, assume $p=2$. Then $S$ has $\frac{q}{2}-1$ irreducible characters of degree $q+1$ and $\frac{q}{2}$ irreducible characters of degree $q-1$.  Here $\out(S)$ is cyclic of size $a$, where $q=2^a$.  This yields $k_{\aut(S)}(B_0(S))\geq 1+\frac{2^a-1}{a}$, which is larger than 5 for $a\geq 5$.  Further, the statement can be checked using GAP when $S=\PSL_2(16)$, completing the proof. 
\end{proof}

\subsection{Non-Defining Characteristic}
From now on, let $q$ be a power of a prime $q_0$ different from $p$ and let $d_p(q)$ be the order of $q$ modulo $p$ if $p$ is odd, and let $d_2(q)$ be the order of $q$ modulo $4$.

 In this situation, it will often be useful to consider a certain collection of characters of a simple group $S$ of Lie type known as \emph{unipotent characters}.  A block containing a unipotent character is called a unipotent block, and in particular the principal block is one such unipotent block.

 If $G$ is a group of Lie type such that the underlying algebraic group is simply connected and such that $S=G/\zent{G}$, and $\wt{G}$ is a group of Lie type such that the underlying algebraic group has a connected center and $G\lhd \wt{G}$ via a regular embedding as in \cite[Section 15.1]{CE04}, then  the work of Lusztig \cite{lus88} (see also \cite[2.3.14 and 2.3.15]{GM20}) shows that the unipotent characters of $\wt{G}$ are irreducible on restriction to $G$ and trivial on $\zent{\wt{G}}$ (and hence on $\zent{G}$).  Therefore, unipotent characters in $B_0(\wt{G})$ may also be viewed as unipotent characters of $S$ lying in $B_0(S)$ by applying Lemma \ref{lem:quotientblock}. Further,  Lusztig has determined the stabilizers in $\aut({S})$ of the unipotent characters, which is also summarized in \cite[Theorem 2.5]{malle08}.  In particular, the unipotent characters are stabilized by $\aut(S)$ except in the following cases: $\type{B}_2(2^{2n+1})$, $\type{G}_2(3^{2n+1})$, and $\type{F}_4(2^{2n+1})$, in which cases the exceptional graph automorphism permutes certain pairs of unipotent characters; $\type{D}_4(q)$, in which case the graph automorphism of order three permutes two specific triples of unipotent characters, and $\type{D}_n(q)$ with even $n\geq 4$, in which case the graph automorphism of order $2$ interchanges the pairs of unipotent characters labeled by so-called degenerate symbols.

The set $\irr{\wt{G}}$ can be partitioned into so-called Lusztig series $\mathcal{E}(\wt{G}, s)$, which are indexed by $\wt{G}^\ast$-classes of semisimple elements $s$ in a group $\wt{G}^\ast$, known as the dual group of $\wt{G}$.  The elements of $\mathcal{E}(\wt{G},s)$ are further in bijection with unipotent characters of $\cent{\wt{G}^\ast}{s}$, with $1_{\cent{\wt{G}^\ast}{s}}$ corresponding to the so-called \emph{semisimple} character $\chi_s$.   We record the following, which is a specific case of \cite[Theorem 9.12]{CE04}.
 
 \begin{lem}\label{lem:nondefblocks}
 With the notation above, the characters lying in the unipotent $p$-blocks of $\wt{G}$ are exactly those characters in Lusztig series $\mathcal{E}(\wt{G}, t)$ indexed by semisimple  $p$-elements $t$ in  $\wt{G}^\ast$.   Further, $B_0(\wt{G})$ is the unique unipotent block of $\wt{G}$ if and only if all unipotent characters lie in $B_0(\wt{G})$. 
 \end{lem}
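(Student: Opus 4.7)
The plan is to recognize the first assertion as an immediate specialization of \cite[Theorem 9.12]{CE04}. That theorem, applied to the ambient algebraic group with connected center (of which $\wt{G}$ is the $F$-fixed point subgroup), gives a partition of $\irr{\wt{G}}$ by $p$-blocks in terms of the Lusztig series: specifically, the union of characters lying in unipotent $p$-blocks is exactly $\bigsqcup_{t} \mathcal{E}(\wt{G}, t)$, where $t$ ranges over $\wt{G}^\ast$-classes of semisimple $p$-elements. So the first half is just an invocation of that theorem; the key point to verify is that the hypotheses of the CE04 result (in particular, connectedness of the center of the dual, or equivalently of $\wt{\bG}^\ast$, and the appropriate behavior of Bonnaf\'e--Rouquier-style Jordan decomposition of blocks) are met in our setup, which is standard.

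For the second assertion I would argue directly from the definition of a unipotent block as a $p$-block containing at least one unipotent character. For the forward implication, suppose $B_0(\wt{G})$ is the unique unipotent block. Every unipotent character $\chi$ lies in some $p$-block of $\wt{G}$, and by definition that block is unipotent, so by hypothesis it must be $B_0(\wt{G})$; hence all unipotent characters lie in $B_0(\wt{G})$. For the reverse implication, suppose every unipotent character lies in $B_0(\wt{G})$. Then any $p$-block $B \neq B_0(\wt{G})$ of $\wt{G}$ contains no unipotent character, so by definition $B$ is not a unipotent block; thus $B_0(\wt{G})$ is the only unipotent block.

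The main (minor) obstacle is the bookkeeping needed to verify that \cite[Theorem 9.12]{CE04} in its general form specializes cleanly to the statement here: one needs to confirm that the hypothesis ``$\wt{G}$ arises by regular embedding so that $\wt{\bG}$ has connected center'' is what is required to make $\mathcal{E}(\wt{G},t)$ itself a union of blocks (rather than merely a union of characters whose blocks one has to further identify via central functions), and that the principal block $B_0(\wt{G})$ corresponds to the class $t = 1$. Both are standard facts from the Deligne--Lusztig/Brou\'e--Michel framework. Beyond this verification, no further computation is needed, since the second assertion is purely definitional once the first is in hand.
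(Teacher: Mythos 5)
Your proposal is correct and follows essentially the same route as the paper, which simply records the lemma as a specific case of \cite[Theorem 9.12]{CE04} without further elaboration. Your explicit unwinding of the second assertion (a purely definitional equivalence, given the paper's definition of a unipotent block as one containing a unipotent character) and your check that the regular-embedding hypothesis yields the connected center required by the cited theorem are both accurate and in the spirit of the paper's terse citation.
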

 
We also record  the following observation, which has been useful in many contexts for determining elements of $\irrp{p}{B_0}$. 
 
 \begin{lem}\label{lem:uniquep'dregular}
 Let $G=\bG^F$ be a group of Lie type defined over $\mathbb{F}_q$ such that the underlying algebraic group $\bg{G}$ is simple and simply connected and such that $G$ is not of Suzuki or Ree type.  Let $p\nmid q$ be a prime such that $d:=d_p(q)$ is a regular number for $G$ in the sense of Springer \cite{springer74}.  Then $B_0(G)$ is the unique block of $G$ containing unipotent characters with $p'$-degree.
 \end{lem}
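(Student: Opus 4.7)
The strategy is to combine the Brou\'e--Malle--Michel classification of unipotent $p$-blocks with a cyclotomic degree analysis. Write $d := d_p(q)$, let $S_0$ be a Sylow $\Phi_d$-torus of $\bG$, and set $L_0 = \cent{\bG}{S_0}^F$.

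First, I would invoke the classification of unipotent $p$-blocks (due to Brou\'e--Malle--Michel and Cabanes--Enguehard; see, e.g., \cite[Theorem~22.9]{CE04}): the unipotent $p$-blocks of $G$ are in bijection with $G$-conjugacy classes of $d$-cuspidal pairs $(L,\lambda)$, where $L$ is a $d$-split Levi subgroup of $G$ and $\lambda\in\irr L$ is a unipotent $d$-cuspidal character of $L$; the unipotent members of the block attached to $(L,\lambda)$ are precisely those in the $d$-Harish-Chandra series $\EC(G,(L,\lambda))$, and $B_0(G)$ corresponds to the pair $(L_0, 1_{L_0})$.

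Next, I would exploit the regularity of $d$. By Springer's theory applied via Brou\'e--Malle's generic Sylow theorems for $\Phi_d$-tori, $d$ being regular for $G$ forces $S_0$ to be a maximal torus of $\bG$, so $L_0$ is itself an $F$-stable maximal torus. In particular $1_{L_0}$ is the only unipotent character of $L_0$, so $(L_0, 1_{L_0})$ is the only $d$-cuspidal pair with $L=L_0$. Moreover, every $d$-split Levi of $G$ contains a $G$-conjugate of $L_0$, so for any non-principal unipotent block $(L,\lambda)$ one has $L\supsetneq L_0$, $\lambda\ne 1_L$, and $a_d(L)=a_d(G)$, where $a_d(\cdot)$ denotes the multiplicity of $\Phi_d$ in the order polynomial.

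Finally, I would carry out the degree analysis. For $\chi\in\EC(G,(L,\lambda))$ with $(L,\lambda)\ne(L_0,1_{L_0})$, the $d$-Harish-Chandra degree formula
\[
  \chi(1)\;=\;\pm\,\frac{|G|_{q'}}{|L|_{q'}}\,\lambda(1)\,\frac{R_\eta(q)}{|W_G(L,\lambda)|},
\]
applies, where $\eta\in\irr{W_G(L,\lambda)}$ corresponds to $\chi$ and $R_\eta$ is a relative generic degree. By Lusztig's classification of unipotent characters, the generic degree of any nontrivial unipotent $d$-cuspidal character $\lambda$ of $L$ is divisible by $\Phi_d(q)^{a_d(L)}$. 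Combining this with $a_d(L)=a_d(G)$ and the resulting fact that $[G:L]_{q'}$ carries no $\Phi_d$-factor, we conclude that $\Phi_d(q)^{a_d(G)}$ divides $\chi(1)$, up to the controlled integers $|W_G(L,\lambda)|$ and the generic denominator $n_\chi$. Since $p\mid \Phi_d(q)$ by the very definition of $d=d_p(q)$, while $p$ is coprime to those denominators (they are bounded in terms of $|W|$, and the exclusion of Suzuki and Ree types ensures that unipotent degrees are honest polynomials in $q$), this forces $p\mid\chi(1)$, so no block other than $B_0(G)$ contains a $p'$-degree unipotent character.

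\textbf{Main obstacle.} The delicate point is verifying that the $\Phi_d$-divisibility of $\lambda(1)$ survives through the degree formula to divide the integer $\chi(1)$: one must ensure that the denominators $|W_G(L,\lambda)|$ and $n_\chi$ cannot absorb the $p$-factor. This is handled by appealing to the explicit generic degree tables and the standard bounds on these denominators in each Lie-type family; the exclusion of Suzuki and Ree groups removes the (genuine) complication that their unipotent degrees involve $\sqrt{q}$ rather than $q$, which would require a separate analysis.
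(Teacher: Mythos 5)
Your argument is a genuine alternative to the paper's, though it is longer and has some imprecisions in the technical details. The paper simply cites \cite[Corollary~6.6]{malle07} to localize all $p'$-degree unipotent characters in the $d$-Harish-Chandra series lying above the Sylow $d$-torus centralizer $L_0$, then notes that regularity forces $L_0$ to be a maximal torus (so there is only one such series), and finally applies \cite[Theorem~A]{enguehard00} to place that whole series in $B_0(G)$. You instead set up the full Brou\'e--Malle--Michel/Cabanes--Enguehard parametrization of unipotent blocks by $d$-cuspidal pairs and then run a $\Phi_d$-divisibility argument to eliminate the non-principal blocks; this amounts to re-deriving Malle's Corollary~6.6 in the case at hand, so the two routes trade a single citation against a more self-contained (but heavier) computation.

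That said, a key step is misstated and the main obstacle you flag is not actually resolved. The Brou\'e--Malle--Michel criterion for $d$-cuspidality (their Proposition~3.11) says that the $\Phi_d$-adic valuation of the generic degree of a $d$-cuspidal unipotent $\lambda$ of $L$ is $a_d(L)-a_d(\zent{L}^\circ)$, \emph{not} $a_d(L)$ as you assert. This valuation is still positive whenever $L$ properly contains $L_0$ (because if $a_d(\zent{L}^\circ)=a_d(G)$ then $\zent{L}^\circ$ would contain a Sylow $\Phi_d$-torus and $L=\cent{\bG}{\zent{L}^\circ_{\Phi_d}}^F$ would coincide with $L_0$), so your conclusion is salvageable, but it requires exactly this extra observation about $Z^\circ(L)$. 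More importantly, the danger that $|W_G(L,\lambda)|$ or the denominator of the relative generic degree absorbs the $\Phi_d$-factor from $\lambda(1)$ is real, and the appeal to ``explicit tables and standard bounds'' is a promissory note rather than a proof; the paper avoids this entirely because Malle's result already packages the degree bookkeeping. If you want your route to stand alone, you should at least cite a precise statement that guarantees those denominators contribute no $\Phi_d$-part (for instance the integrality of $d$-Harish-Chandra degrees together with the $\Phi_d$-adic analysis underlying Malle's Corollary~6.6), rather than leaving it to a case check.
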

 \begin{proof}
By \cite[Corollary 6.6]{malle07}, any unipotent character of $p'$-degree lies in a $d$-Harish-Chandra series indexed by $(L, \lambda)$ where $L$ is the centralizer $\cent{G}{\bg{S}}$ of a Sylow $d$-torus $\bg{S}$ of $\bG$.  Further, $\cent{\bG}{\bg{S}}$ is a torus since $d$ is regular (see \cite[Definition 2.5]{spath09}), and hence is further a maximal torus since $\cent{\bG}{\bg{S}}$ necessarily contains a maximal torus.  Hence there is a unique such series.  By \cite[Theorem A]{enguehard00}, all members of this series lie in the same block of $G$, which is therefore the principal block $B_0(G)$.
 \end{proof}

We begin by considering the simple exceptional groups of Lie type, by which we mean  $\tw{2}\type{B}_2(2^{2n+1})$, $\type{G}_2(q)$, $\tw{2}\type{G}_2(3^{2n+1})$, $\type{F}_4(q)$, $\tw{2}\type{F}_4(2^{2n+1})$, $\tw{3}\type{D}_4(q)$, $\type{E}_6^\pm(q)$, $\type{E}_7(q)$, and $\type{E}_8(q)$.  In the cases of Suzuki and Ree groups, we only consider $n\geq 1$, as the Tits group $\tw{2}\type{F}_4(2)'$ and the group $\tw{2}\type{G}_2(3)'\cong \type{A}_1(8)$ are dealt with in Proposition \ref{prop:sporadics}.

\begin{lem}\label{lem:exceptionalcross}
Let $S$ be an exceptional group of Lie type  defined over $\mathbb{F}_q$ as above, and let $p$ be a prime not diving $q$.  Let $A$ be an almost simple group with simple socle $S$.  Then $A$ is not a minimal counterexample to Theorem \ref{thmA}.
\end{lem}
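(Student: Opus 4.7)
The plan is to show, for each simple exceptional group $S$ of Lie type and each prime $p\nmid q$, that enough irreducible characters can be located in $B_0(A)$ to contradict $k(B_0(A))=5$. Throughout, I would invoke Lemma \ref{lem:cyclic} to reduce to the case where $P\in \Syl_p(A)$ is non-cyclic. Combining this with Theorem \ref{thmB}, the remaining task is to exhibit $k_A(B_0(S))\geq 6$ in the case $p\nmid |A:S|$, or (via Lemma \ref{lem:quot4}) $k_A(B_0(S))\geq 3$ in the case $k(B_0(A/S))=4$.

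First I would dispense with the Suzuki and Ree groups $\tw{2}\type{B}_2(q)$ and $\tw{2}\type{G}_2(q)$. In each case, $|S|$ factors as a product of pairwise coprime integers corresponding to the orders of the maximal tori, so the Sylow $p$-subgroup of $S$ is cyclic for every prime $p\neq q_0$; a non-cyclic Sylow in $A$ thus forces $p$ to divide $|A:S|$, in which case Theorem \ref{thmB} together with Theorem \ref{smaller4}(d) severely restricts $A/S$ (its Sylow $p$-subgroup must be ${\sf C}_4$, ${\sf C}_5$, or ${\sf C}_2\times{\sf C}_2$), and a case-by-case check, making use of the fact that all unipotent characters are $\aut(S)$-invariant for these twisted types, then finishes the count.

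For the remaining exceptional types $\type{G}_2(q)$, $\type{F}_4(q)$, $\tw{2}\type{F}_4(q)$, $\tw{3}\type{D}_4(q)$, $\type{E}_6^\pm(q)$, $\type{E}_7(q)$, and $\type{E}_8(q)$, I would work systematically with unipotent characters in $B_0(S)$. By Lemma \ref{lem:nondefblocks}, whenever $B_0(S)$ is the unique unipotent block all unipotent characters lie in $B_0(S)$, and by Lemma \ref{lem:uniquep'dregular}, whenever $d_p(q)$ is a regular number for $S$ in the sense of Springer, $B_0(S)$ contains every unipotent character of $p'$-degree. Lusztig's classification of $\aut(S)$-stabilizers (recalled just before Lemma \ref{lem:nondefblocks}) shows that among the exceptional types considered here only $\type{G}_2(3^{2n+1})$ and $\type{F}_4(2^{2n+1})$ admit non-$\aut(S)$-invariant unipotent characters, and only in small pairs. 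Consequently each $\aut(S)$-invariant unipotent character in $B_0(S)$ yields a distinct $A$-orbit, and since the total numbers of unipotent characters are $10$, $8$, $21$, $37$, $30$, $76$, and $166$ for $\type{G}_2$, $\tw{3}\type{D}_4$, $\tw{2}\type{F}_4$, $\type{F}_4$, $\type{E}_6^\pm$, $\type{E}_7$, and $\type{E}_8$ respectively, the generic situation comfortably produces more than six orbits.

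The main obstacle will be the analysis of small primes and small values of $q$, where $B_0(S)$ need not be the unique unipotent block and one must identify explicitly which unipotent characters lie in $B_0(S)$. This decomposition is described by $d$-Harish-Chandra and $d$-cuspidal pair theory, with the needed tables worked out by Enguehard in \cite{enguehard00} and its precursors; combining them with the $\aut(S)$-invariance statement above should yield the bound in all but finitely many $(S,p)$ combinations. The remaining finite list of exceptions not already covered by Proposition \ref{prop:sporadics} (principally small-$q$ cases and particular small non-defining primes for a given type) can then be verified directly by counting $A$-orbits of characters in $B_0(S)$ for each almost simple $A\leq \aut(S)$, using the character tables in \cite{chevie} or the GAP character table library.
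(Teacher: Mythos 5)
Your overall strategy — reduce to non-cyclic Sylow via Lemma \ref{lem:cyclic}, count $\aut(S)$-orbits of unipotent characters in $B_0(S)$ using Lemma \ref{lem:uniquep'dregular} for regular $d$ and the $d$-Harish-Chandra / $d$-cuspidal data of Brou\'e--Malle--Michel and Enguehard for non-regular $d$, handle the exceptional graph automorphism cases, and verify the small exceptions computationally — is essentially the approach of the paper. However, there are two concrete problems with your sketch.

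First, your claim that ``the Sylow $p$-subgroup of $S$ is cyclic for every prime $p\neq q_0$'' for both Suzuki and Ree groups is false. For $S=\tw{2}\type{G}_2(3^{2n+1})$, the torus orders $q-1$ and $q+1$ share a factor of $2$, and the Sylow $2$-subgroup of $S$ is elementary abelian of order $8$, hence non-cyclic. Your deduction ``a non-cyclic Sylow in $A$ thus forces $p$ to divide $|A:S|$'' therefore breaks down, and the subsequent analysis via Theorem \ref{smaller4}(d) does not apply. The paper handles this case separately by invoking Ward's character table of the Ree groups to exhibit at least $6$ characters of distinct degree in $B_0(S)$.

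Second, the ``generic situation comfortably produces more than six orbits'' step papers over the fact that the total number of unipotent characters of $S$ is not what matters — one needs at least $6$ unipotent characters (up to $\aut(S)$-conjugacy) \emph{of $p'$-degree}, equivalently lying in $B_0(S)$. This is not an issue only for small $q$ or small $p$: for example, $\tw{3}\type{D}_4(q)$ with $d_p(q)=12$ has only four unipotent characters of $p'$-degree for every admissible $q$, and the argument must fall back on the observation that the Sylow $p$-subgroup is cyclic there. Similarly $\type{E}_6(q)$ with $d_p(q)=5$ and $\tw{2}\type{E}_6(q)$ with $d_p(q)=10$ yield only five unipotent characters in $B_0(S)$, again saved by cyclicity of $P$. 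A systematic walk through the regular numbers $d$ for each type, reading off $p'$-degree unipotent characters from Carter's tables, is necessary and is exactly what the paper does; your CHEVIE/GAP fallback is aimed at finitely many $q$ and misses these infinite families of $(S,d)$ where the crude count is below $6$.
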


\begin{proof} 
For $S=\tw{2}\type{B}_2(2^{2n+1})$ or $S=\tw{2}\type{G}_2(3^{2n+1})$, the Sylow $p$-subgroups are cyclic if $p\geq 5$. (And note that $3\nmid |S|$ in the first case.) When $p\geq 5$, we further see that $k_A(B_0(S))\geq 3$ since by \cite{BurkhardtSz}, $B_0(S)$ contains 3 characters of distinct degrees for $\tw{2}\type{B}_2(2^{2n+1})$,  and by \cite{Hiss91}, there are at least 3 characters in $B_0(S)$ in the case $\tw{2}\type{G}_2(3^{2n+1})$, of which at least 2 are unipotent. Then we are done in this case by  applying Theorem \ref{thmB} and Lemmas \ref{lem:cyclic} and \ref{lem:quot4}.  
For $p=2$ and $S=\tw{2}\type{G}_2(3^{2n+1})$, there are at least 6 characters of distinct degree in $B_0(S)$ by \cite{ward}, forcing $k_A(B_0(S))\geq 6$.    
For $S=\tw{2}\type{F}_4(2^{2n+1})$ and $p\geq 3$, we see using \cite{malle91} that there are at least 6 unipotent characters in $B_0(S)$, so that again $k_A(B_0(S))\geq 6$.  

Hence we may assume that $S$ is not a Suzuki or Ree group.  We claim that we may further assume that $d_p(q)$ is a regular number, in the sense of Springer \cite{springer74}.  If $d_p(q)$ is a non-regular number, we see from \cite[Table 2]{BMM} combined with \cite[Theorem A]{enguehard00} that there are at least 6 unipotent characters in $B_0(S)$, with the possible exception of $\type{E}_6(q)$ when $d_p(q)=5$ or $\tw{2}\type{E}_6(q)$ when $d_p(q)=10$, in which cases \cite[Table 2]{BMM} instead yields at least 5 unipotent characters in $B_0(S)$. Recalling that these unipotent characters are invariant under $\aut(S)$, this gives $k_A(B_0(S))\geq 6$, respectively 5.  In the cases $\type{E}_6(q)$ when $d_p(q)=5$ or $\tw{2}\type{E}_6(q)$ when $d_p(q)=10$, a Sylow $p$-subgroup of $S$ is cyclic, and hence $A$ is not a minimal counterexample to Theorem \ref{thmA} by again applying Theorem \ref{thmB} with Lemmas \ref{lem:cyclic} and \ref{lem:quot4}.

We now assume $d:=d_p(q)$ is a regular number.   Let $G$ be a group of Lie type of simply connected type such that $G/\zent{G}=S$.  By Lemma \ref{lem:uniquep'dregular}, the principal block $B_0(G)$ is the unique block of $G$ containing unipotent characters of $p'$-degree.  Then from the discussion above, it would suffice to show there exist at least $6$ unipotent characters of $p'$ degree for $G$ (hence $S$) that are not conjugate under  $\aut(S)$. Recall that in the cases under consideration, the unipotent characters are $\aut(S)$-invariant except for the case of $\type{G}_2(3^{2m+1})$ and $\type{F}_4(2^{2m+1})$.

 For $G$ of type $\type{E}_6^\pm(q)$, $\type{E}_7(q)$, or $\type{E}_8(q)$, we see from the list of unipotent character degrees in \cite[Section 13.9]{carter} that there are at least 6 $p'$-degree unipotent characters for each regular $d$, so we are done in these cases. 
 
 For $\type{F}_4(q)$, again using the list in \cite[Section 13.9]{carter}, we see for $d>2$ or $p>3$ that there are at least 6 $p'$-degree unipotent characters with distinct degrees.  Further, there are at least 6 $p'$-degree unipotent characters for $p=2,3$.  In the case $p=3$ and $q$ is an odd power of $2$, we have the trivial character, the Steinberg character, and at least four pairs of $3'$-degree unipotent characters, where the pairs are permuted by the graph automorphism (see \cite[Theorem 2.5]{malle08}) but left invariant by all other members of $\aut(S)$.  This still yields at least 6 $\aut(S)$-orbits of unipotent characters of $p'$-degree in $B_0(S)$, so that $k_A(B_0(S))\geq 6$.

For $S=\type{G}_2(q)$, \cite{HS92, HS90} show that there are at least 6 characters in $B_0(S)$ for $p=2$ and $p=3$, respectively, with distinct degrees.  For $d_p(q)\geq 3$, we see using \cite[Section 13.9]{carter} that there are again at least 6 $p'$-degree unipotent characters, which are further not permuted by the exceptional graph automorphism if $q$ is an odd power of $3$.  For $d_p(q)\in\{1,2\}$  and $p>3$, there are six $p'$-degree unipotent characters. In the case that $q$ is an odd power of $3$, two of these characters  are interchanged by the exceptional graph automorphism. Hence in the latter case, there are 5 different $\aut(S)$-classes of unipotent characters in $B_0(S)$.   But there are also non-unipotent characters in $B_0(S)$ (using e.g. \cite{Shamash92}), yielding $k_{\aut(S)}(B_0(S))\geq 6$.

Finally, if $S=\tw{3}\type{D}_4(q)$, \cite{DeM87} shows that there are at least 6 characters in $B_0(S)$ with distinct degrees for $p=2,3$.   If $d\in\{3,6\}$ or $d\in\{1,2\}$ with $p>3$, \cite[Section 13.9]{carter} shows that there are at least 6 $p'$-degree unipotent characters. Hence we see $k_{\aut{S}}(B_0(S))\geq 6$, except possibly for $d=12$. If $d=12$, there are four unipotent characters of $p'$-degree, and hence $k_{\aut(S)}(B_0(S))\geq 4$.  But in this case, a Sylow $p$-subgroup is cyclic, so we are again done by applying Theorem \ref{thmB} and Lemmas \ref{lem:cyclic} and \ref{lem:quot4}. 
\end{proof}

In the context of Theorem \ref{thmB}, we have now further reduced ourselves to the case that $S$ is a finite classical group defined in a characteristic different from $p$.  In the remaining sections, we address these cases.

\subsubsection{Linear and Unitary Groups with $p$ odd}

Let $S=\PSL_n^\epsilon(q)$ and write $G=\SL_n^\epsilon(q)$ and $\wt{G}=\GL_n^\epsilon(q)$, where $q$ is a power of some prime. % and $\epsilon\in\{\pm1\}$ is $+1$ in the linear case and $-1$ in the unitary case. 
 Let $p$ be an odd prime not dividing $q$ and let $\wt{B}_0$ be the principal $p$-block of $\wt{G}$.  %By the results of Lusztig \cite{lus88}, the unipotent characters of $\wt{G}$ are trivial on $\zent{\wt{G}}$ and restrict irreducibly to unipotent characters of $G$.  Hence
Recall from before that unipotent characters in $\wt{B}_0$ may also be viewed as characters of $B_0(S)$ and that they are invariant under $\aut(S)$.   In most cases, we aim to show that $\wt{B}_0$ contains at least 6 unipotent characters, which will force $k(B_0(A))\geq k_A(B_0(S))\geq 6$ for any almost simple group $A$ with simple socle $S$.

Let $e$ be the order of $q$ modulo $p$ if $\epsilon=1$ and of $q^2$ modulo $p$ if $\epsilon=-1$, and let $e'$ be as follows: 
\[e':=\left\{\begin{array}{cc} 
e & \hbox{ if $\epsilon=1$}\\
e & \hbox{if $\epsilon=-1$ and $p\mid q^e-(-1)^e$}\\
2e & \hbox{ if $\epsilon=-1$ and $p\mid q^e+(-1)^e$}.\\
\end{array}\right.\] 
Write $n=me'+r$ with $0\leq r<e'$.   The unipotent characters of $\wt{G}$ are indexed by partitions of $n$, and two such characters lie in the same block exactly when the corresponding partitions have the same $e'$-core, by \cite{FS82}. 
Identifying the trivial character with the partition $(n)$,  a unipotent character is then contained in $\wt{B}_0$ if and only if the corresponding partition has $e'$-core $(r)$.

Now, by \cite[Theorem 1.9]{michlerolsson}, $\wt{B}_0$ has the same block-theoretic invariants as the principal $p$-block $B_{me'}$ of $\GL^\epsilon_{me'}(q)$.  
Here, unipotent characters of $\GL_{me'}^\epsilon(q)$ lie in $B_{me'}$ exactly when they have trivial $e'$-core.  Further, there is a bijection between partitions of $me'$ with trivial $e'$-core and partitions of $n$ with $e'$-core equal to $(r)$.  That is, there is a bijection between unipotent characters in $\wt{B}_0$ and unipotent characters in $B_{me'}$.  By \cite[Theorem 3.2]{BMM}, the number of unipotent characters in $\wt{B}_{0}$ is further given by the number of irreducible characters of the relative Weyl group of a Sylow $e$-torus of $\wt{G}$. We also remark that a Sylow $p$-subgroup is cyclic in the case $m=1$.

\begin{lem}\label{typeAmost}
Let $A$ be an almost simple group with socle $S=\PSL_n^\epsilon(q)$, where $q$ is a power of a prime $q_0$.   Let $p\neq q_0$ be an odd prime. With the notation above, $A$ is not a counterexample to Theorem \ref{thmA} as long as $me'\geq 5$. 
\end{lem}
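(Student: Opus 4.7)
The plan is to produce at least six $\aut(S)$-non-conjugate irreducible characters of $B_0(S)$; combined with Lemma~\ref{lem:blockabove} and Clifford theory, this will yield $k(B_0(A))\geq k_A(B_0(S))\geq 6$. As recalled before the statement, a unipotent character of $\wt G$ lies in $\wt{B}_0$ precisely when its indexing partition of $n$ has $e'$-core $(r)$; such characters are trivial on $\zent{\wt G}$, restrict irreducibly to $G$, and by Lemma~\ref{lem:quotientblock} descend to distinct members of $B_0(S)$. Since $S$ is of type~$A$, they are moreover $\aut(S)$-invariant. Thus it is enough to exhibit six unipotent characters in $\wt{B}_0$.

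By the $e'$-core/$e'$-quotient correspondence, the unipotent characters in $\wt{B}_0$ are in bijection with $e'$-multipartitions of $m$, i.e., tuples $(\lambda^{(1)},\ldots,\lambda^{(e')})$ of partitions satisfying $\sum_i |\lambda^{(i)}|=m$. I would then do a short case analysis on $m$, using $p(1)=1$, $p(2)=2$, $p(3)=3$, $p(4)=5$, $p(5)=7$. For $m=2$ the hypothesis $me'\geq 5$ forces $e'\geq 3$, giving $2e'+\binom{e'}{2}\geq 9$ multipartitions. For $m=3$ we have $e'\geq 2$, and already $e'=2$ yields $2p(3)+2p(2)p(1)=10$ bipartitions. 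For $m=4$ again $e'\geq 2$, and the count is at least $2p(4)+2p(3)p(1)+p(2)^2=20$. For $m\geq 5$, even the case $e'=1$ gives $p(m)\geq 7$. Hence whenever $m\geq 2$ and $me'\geq 5$, there are at least six unipotent characters in $\wt{B}_0$.

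It remains to address $m=1$, in which case the hypothesis forces $e'\geq 5$. Here the centralizer in the ambient algebraic group of a Sylow $e'$-torus is a maximal torus, whose rational points have cyclic $p$-part (of $p$-rank one). Consequently a Sylow $p$-subgroup of $\wt G$ is cyclic, and hence so is a Sylow $p$-subgroup of $S$. Lemma~\ref{lem:cyclic} then ensures that $A$ is not a counterexample to Theorem~\ref{thmA}.

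The only delicate point is the descent of unipotent characters from $\wt G$ through $G$ to $S$ together with the $\aut(S)$-invariance; both are already furnished by the discussion preceding the lemma, after which the argument reduces to the elementary multipartition count above.
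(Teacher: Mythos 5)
Your multipartition count is a clean alternative to the paper's argument: the paper exhibits the hook partitions $(1^c,me'-c)$ to produce six unipotent characters in $B_{me'}$ when $me'\geq 6$, whereas you invoke the $e'$-core/$e'$-quotient bijection and enumerate $e'$-multipartitions of $m$. Both are correct, and yours has the advantage of also disposing immediately of the subcase $m=1$, $e'\geq 6$ (there are then $e'\geq 6$ multipartitions of $1$, hence six $\aut(S)$-invariant unipotent characters in $B_0(S)$), which the paper handles together with $e'=5$ via a different device.

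The genuine gap is in the remaining subcase $m=1$, $e'=5$, where the multipartition count only yields five unipotent characters. Your argument there deduces that a Sylow $p$-subgroup of $S$ (and of $\wt G$) is cyclic and then cites Lemma~\ref{lem:cyclic} to conclude. But Lemma~\ref{lem:cyclic} requires a Sylow $p$-subgroup of $A$ itself to be cyclic, and cyclicity does not automatically pass from $S$ to $A$: when $p$ divides $|A:S|$ (possible, for instance, through field automorphisms), $P\in\Syl_p(A)$ is an extension of the cyclic $P\cap S$ by a cyclic $p$-group, and such an extension need not be cyclic. One could patch this by invoking Theorem~\ref{thmB} and Lemma~\ref{lem:quot4} (the five $\aut(S)$-invariant unipotent characters give $k_A(B_0(S))\geq 5\geq 3$), but that would only yield that $A$ is not a \emph{minimal} counterexample, which is weaker than the lemma's statement. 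The paper instead applies the counting formula inside Lemma~\ref{lem:cyclic} to $\wt G$ (whose cyclic Sylow $p$-subgroup has order $\geq 11$ since $p\equiv 1\pmod{5}$ or $p=11,\dots$), forcing $k(\wt B_0)\geq 6$ and thus a non-unipotent character $\chi\in\wt B_0$; it then uses Lemma~\ref{lem:nondefblocks} and \cite[Lemma 2.6]{SFT20} to see that $\chi$ is trivial on $\zent{\wt G}$ and restricts to a non-unipotent constituent in $B_0(S)$ not $\aut(S)$-conjugate to the five unipotents, giving $k_A(B_0(S))\geq 6$ outright. Your proof should be amended along these lines to close the $m=1$, $e'=5$ case.
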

\begin{proof}
Given the discussion above, we know that if $B_{me'}$ contains at least 6 unipotent characters, then $k_A(B_0(S))\geq 6$. 
%$B_0(S)$ contains at least 6 $A$-invariant characters.  
 But by taking partitions of the form $(1^c, me'-c)$ for $0\leq c\leq me'$, we see that $B_{me'}$ contains at least 6 unipotent characters as long as $me'\geq 6$.  

Now, assume $me'=5$.  Then either $e'=1$ or $e'=5=me'$.  In the first case, all unipotent characters lie in $B_{me'}$, so $B_{me'}$ again contains at least 6 unipotent characters.  If $e'=me'=5$, then $B_{me'}$ contains $5$ unipotent characters, and hence $B_0(S)$ contains at least $5$ characters that are $A$-invariant.  
Further, in this case $\GL_{me'}^\epsilon(q)$ has a cyclic Sylow $p$-subgroup, and hence so does $\wt{G}$. The argument in Lemma \ref{lem:cyclic} shows that $\wt{B}_0$ contains at least one more character, which is necessarily not $A$-conjugate to the 5 unipotent characters. %Then writing $p^a\mid\mid (q^{2e}-1)$, we have by \cite[Theorem 5.2.1(ii)]{cravenbook} that $k(B_{me'})=5+\frac{\ell^a-1}{5}\geq 6$, and hence $\wt{B}_0$ contains at least one more character (which is necessarily not $A$-conjugate to the 5 unipotent characters). ////use Lemma \ref{lem:cyclic} instead////
   Now, by Lemma \ref{lem:nondefblocks}, the non-unipotent characters in $\wt{B}_0$ must be in Lusztig series $\mathcal{E}(\wt{G}, t)$ with $t\in\wt{G}^\ast\cong\wt{G}$ a nontrivial $p$-element.  Note that since $n\geq 5$, we have $|\zent{\wt{G}^\ast}|=|\wt{G}^\ast/ \Oh{q_0'}{\wt{G}^\ast}|=|\wt{G}^\ast/[\wt{G}^\ast, \wt{G}^\ast]|=| \wt{G}/G|$, and since $e'=5$ and hence $p\nmid (q-\epsilon)$, we know this number is not divisible by $p$.  Then using \cite[Lemma 2.6]{SFT20}, we have every member of such a series $\mathcal{E}(\wt{G}, t)$ is trivial on the center and cannot lie above a unipotent character of $G$.  This forces a sixth member of $B_0(S)$ that is not $A$-conjugate to the unipotent characters.
\end{proof}

Now we are ready to complete the case that $S=\PSL_n^\epsilon(q)$ with $p\nmid q$ an odd prime. % Note that thanks to Lemma \ref{typeAmost}, we are reduced to the case $me'\leq 4$, and hence $n\leq 7$.

\begin{prop}\label{prop:typeAsmall}
Let $A$ be an almost simple group with socle $S=\PSL_n^\epsilon(q)$ where $n\geq 2$ and $q$ is a power of a prime, and  let $p$ be an odd prime not dividing $q$. Then $A$ is not a minimal counterexample to Theorem \ref{thmA}. 
\end{prop}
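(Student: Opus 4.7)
The plan is to leverage Lemma \ref{typeAmost} to reduce to $me' \leq 4$, and then to dispatch the remaining small configurations either by invoking Lemma \ref{lem:cyclic} or by producing enough $\aut(S)$-orbits in $B_0(S)$ directly. Since $n \geq 2$ forces $me' \geq 2$, what remains is $me' \in \{2, 3, 4\}$.

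When $m = 1$, one has $n = e' + r$ with $2 \leq e' \leq 4$, so the Sylow $p$-subgroup of $\wt{G} = \GL_n^\epsilon(q)$ is cyclic; since $e' \geq 2$ ensures that $p \nmid |\zent{G}|$, a Sylow $p$-subgroup of $S$ is cyclic as well. By Theorem \ref{thmB}, a minimal counterexample $A$ must satisfy either $p \nmid |A/S|$ or $k(B_0(A/S)) = 4$. In the first alternative, $\Syl_p(A)$ itself is cyclic, and Lemma \ref{lem:cyclic} delivers a contradiction. In the second alternative, Theorem \ref{smaller4}(d) together with $p$ odd forces $p = 5$ and $\Syl_p(A/S) \cong {\sf C}_5$, and by Lemma \ref{lem:quot4} it suffices to exhibit at least $3$ non-$A$-conjugate members of $B_0(S)$; I would do so by combining the unipotent characters in $B_0(S)$ (which are $\aut(S)$-invariant) with semisimple characters $\chi_t$ attached to nontrivial $p$-elements $t \in \wt{G}^\ast$, which lie in a unipotent block by Lemma \ref{lem:nondefblocks} and descend to $S$ thanks to \cite[Lemma 2.6]{SFT20}.

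When $m \geq 2$, the bound $me' \leq 4$ restricts to $(m, e') \in \{(2, 1), (2, 2), (3, 1), (4, 1)\}$, so $n \in \{2, 3, 4, 5\}$. Here I would count using the Fong--Srinivasan description of $\wt{B}_0$: when $e' = 1$, every unipotent character of $\wt{G}$ lies in $\wt{B}_0$, supplying $p(n)$ unipotent characters in $B_0(S)$, while for $(m, e') = (2, 2)$ one uses the partitions of $n$ with $e'$-core $(r)$. These, enriched by semisimple characters $\chi_t$ indexed by nontrivial $p$-elements of $\wt{G}^\ast$, should yield $k_A(B_0(S)) \geq 6$ in all but a handful of configurations with very small $q$, the latter being subsumed by Proposition \ref{prop:sporadics} or verified directly in GAP. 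The main technical obstacle is bookkeeping the $\aut(S)$-fusion on the pool of such semisimple characters — ensuring that diagonal and field automorphisms do not collapse the required count of $3$ or $6$, and that these characters genuinely descend to $S$ — which is exactly what \cite[Lemma 2.6]{SFT20} is designed to handle.
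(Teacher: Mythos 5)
Your reduction to $me'\leq 4$ via Lemma~\ref{typeAmost} matches the paper, and your handling of the $m=1$ cases through cyclicity of the Sylow $p$-subgroup and Lemma~\ref{lem:cyclic} is on the right track. However, there is a genuine gap in your treatment of the remaining cases, and the ``handful of small $q$'' escape clause does not apply.

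The critical case is $S=\PSL_3^\epsilon(q)$ with $p\mid(q-\epsilon)$, i.e., $n=3$, $e'=1$, $m=3$. This is an infinite family, not a collection of small configurations that can be pushed to Proposition~\ref{prop:sporadics} or GAP. For this case your plan of ``unipotent characters enriched by semisimple characters $\chi_t$'' does not yield $k_A(B_0(S))\geq 6$ on its own. Concretely, when $p=3$ and $(q-\epsilon)_3=3$, there is (up to $\wt{G}^\ast$-conjugacy) only one non-central semisimple $3$-element $t$ in $[\wt G^\ast,\wt G^\ast]$, namely the one with eigenvalues $\{a,a^{-1},1\}$ with $|a|=3$; moreover $t$ is $\wt G^\ast$-conjugate to $tz$ for a central element $z$ of order $3$, so $\chi_t$ restricts to a sum of three constituents over $G$ by Lemma~\ref{lem:restnumber}. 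These three constituents can fuse into as few as two $A$-orbits, which, together with the three unipotent characters, only gives $k_A(B_0(S))\geq 5$. The paper closes this gap by a divisibility argument: the five orbit representatives and their extensions/inductions to $A$ all have height zero, and Theorem~\ref{divisibility3} forces $3\mid k_0(B_0(A))$, whence $k(B_0(A))\geq 6$. Nothing in your sketch supplies this extra step, and no amount of further ``semisimple enrichment'' can, because there are simply no more semisimple $3$-elements to use. For $p>3$ in the same $n=3$ case, a more careful construction is also needed (the paper uses two non-$\aut(S)$-conjugate elements $t_1$ with eigenvalues $\{a,a^{-1},1\}$ and $t_2$ with eigenvalues $\{a,a,a^{-2}\}$, exploiting that $\cent{\wt G^\ast}{t_2}\cong\GL_2^\pm(q)\times\GL_1^\pm(q)$ gives two characters of distinct degree in that series); your sketch does not distinguish $p=3$ from $p>3$ and glosses over precisely this point.

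A secondary issue: you miss a useful structural observation the paper relies on, namely that for $me'\leq 3$ the Sylow $p$-subgroup of $S$ is cyclic except exactly when $n=3$ and $p\mid(q-\epsilon)$. Noticing this (e.g.\ that for $n=2$ with $p\mid(q-\epsilon)$ the Sylow of $\SL_2(q)$ is cyclic for $p$ odd) isolates the single hard infinite family immediately and avoids having to count characters in the cases you list with $n\in\{2,3,4,5\}$ beyond what is needed for Lemma~\ref{lem:quot4}. As written, you would still have to produce six $A$-orbits for $n=2$, $e'=1$, where the two unipotent characters plus one semisimple orbit fall well short; cyclicity is what saves you, and it should be invoked.
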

\begin{proof}
By Theorem \ref{thmB}, we may assume that
either $p$ does not divide $|A:S|$ and $S\cent A P <A$, or that $k(B_0(A/S))=4$.  
Keeping the notation from above and applying Lemma \ref{typeAmost}, we may also assume $me'\leq 4$.  Let $P$ be a Sylow $p$-subgroup of $S$.

 First, let $me'=4$. Then $B_{me'}$ contains at least 4 unipotent characters, taking the forms $(1^c, me'-c)$ as before.  In particular, $k_A(B_0(S))\geq 4$, and we may assume $p\nmid [A:S]$, by Lemma \ref{lem:quot4}.  If $e'=1$ or $2$, we get one additional unipotent character in $B_{me'}$, corresponding to $(2,2)$. Hence in these cases, it suffices to note as before that there is at least one non-unipotent character in $B_{me'}$ (and hence in $\wt{B}_0$), which we may choose to further be trivial on the center and  lie above a non-unipotent character in $B_0(S)$ by choosing a character in $\mathcal{E}(\wt{G},t)$ where $t\in [\wt{G}^\ast, \wt{G}^\ast]\cong G$  is a non-central $p$-element.     This leaves the case $me'=4=e'$, in which case we must have $p\geq 5$ and $P$ is cyclic. Hence, we are done by Lemma \ref{lem:cyclic}.

   Now, if $me'= 3$, note that there are three unipotent characters in $B_{me'}$, and hence in $B_0(S)$.  If $me'=2$, there are two unipotent characters in $B_{me'}$, though we may find a third character in $B_0(S)$ by taking $t\in[\wt{G}^\ast, \wt{G}^\ast]$ to have eigenvalues $\{a, a^{-1}\}$ with $|a|=p$ and arguing as before.  In any case, we see $k_A(B_0(S))\geq 3$, so we may again assume $p\nmid [A:S]$.  Then by Lemma \ref{lem:cyclic}, we may assume $n=3$ and $p\mid (q-\epsilon)$, since in the other cases for $me'\leq 3$, we have $P$ is cyclic.  Note that in the  case $n=3$ and $p\mid (q-\epsilon)$, all three unipotent characters lie in the principal block of $B_0(\wt{G})$.

   If $p=3$, the semisimple character $\chi_t$ of $\wt{G}$ indexed by a semisimple element with eigenvalues $\{a, a^{-1}, 1\}$ with $|a|=3$ is trivial on $\zent{\wt{G}}$ by \cite[Lemma 2.6]{SFT20} %///also ref for $[\wt{G}^\ast,\wt{G}^\ast]=O^{q_0'}$ /// 
   since $t\in[\wt{G}^\ast, \wt{G}^\ast]$ and lies in $B_0(\wt{G})$ by Lemma \ref{lem:nondefblocks}.  Further, \cite[Lemma 2.6]{SFT20} and Lemma \ref{lem:restnumber} also imply that since  $t$ is $\wt{G}^\ast$-conjugate to $tz$ where $z\in \zent{\wt{G}}$ has order 3, we have $\chi_{tz}=\chi_t$ restricts to the sum of 3 irreducible characters in $G$, which must lie in $B_0(G)$ (and hence $B_0(S)$).  This yields $k(B_0(S))\geq 6$.  But since $3\nmid [A:S]$, we further have that at least one of these three characters must be invariant under $A$, and hence $k_A(B_0(S))\geq 5$.  Now, these characters (and those above them in $A$) have height zero, and the same is true for the unipotent characters.  Hence it follows from Theorem \ref{divisibility3} that in fact $k(B_0(A))\geq 6$.

   Finally, assume that $n=3$,  $p\mid (q-\epsilon)$, and  $p> 3$.  Recall that $B_0(\wt{G})$ is the unique unipotent block of $\wt{G}$. Let $t_1, t_2\in[\wt{G}^\ast, \wt{G}^\ast]$ be such that $t_1$ has eigenvalues $\{a, a^{-1}, 1\}$ and $t_2$ has eigenvalues $\{a, a, a^{-2}\}$, where $|a|=p$. Then the characters in the series $\mathcal{E}(\wt{G}, t_i)$ for $i=1,2$ are trivial on $\zent{\wt{G}}$ and lie in $B_0(\wt{G})$ by  \cite[Lemma 2.6]{SFT20} and Lemma \ref{lem:nondefblocks}.  Further, note that $t_i$ cannot be $\aut({S})$-conjugate to $t_jz$ for any $1\neq z\in Z(\wt{G}^\ast)$ or $j\in\{1,2\}$, and hence the characters in $\mathcal{E}(\wt{G}, t_1)$ are not $\aut({S})$-conjugate to those in $\mathcal{E}(\wt{G}, t_2)$ and restrict irreducibly to $G$ by %///ref for aut action/// and 
   Lemma \ref{lem:restnumber}. Hence we see that the characters in these series may be viewed as members of $B_0(S)$, yielding $k_A(B_0(S))\geq 5$ when combined with the unipotent characters in the block.  Finally, since $\cent{\wt{G}^\ast}{t_2}\cong X_1\times X_2$ with $X_1\in\{ \GL_2^\pm(q)\}$ and $X_2\in\{ \GL_1^\pm(q)\}$, it follows that there are two members of  $\mathcal{E}(\wt{G}, t_2)$ with distinct degrees, so $k_A(B_0(S))\geq 6$.  
 \end{proof}

\subsubsection{Remaining Classical Groups with $p$ odd}
  
We set some notation to be used throughout this section.

 Let $q$ be a power of some prime and let $S$ be a simple group $\type{B}_n(q)$ with $n\geq 3$, $\type{C}_n(q)$ with $n\geq 2$, or $\type{D}_n(q)$ or $\tw{2}\type{D}_n(q)$ with $n\geq 4$.  Let $p\nmid q$ be an odd prime and let $e:=d_p(q)/\gcd(2,d_p(q))$ where $d_p(q)$ is the order of $q$ modulo $p$. Write $n=me+r$, where $0\leq r<e$ is the remainder when $n$ is divided by $r$.

Let $H$ be the corresponding symplectic or special orthogonal group  $\operatorname{SO}_{2n+1}(q)$, $\Sp_{2n}(q)$, or $\operatorname{SO}^\epsilon_{2n}(q)$.  For the cases of special orthogonal groups with $q$ odd, let $\Omega\leq H$ be the unique subgroup of index 2, and otherwise let $\Omega:=H$, so that $\Omega/\zent{\Omega}=S$.  
 Further, let $\overline{H}$ be the group $\operatorname{GO}^\epsilon_{2n}(q)$ in the case of type $\type{D}_n, \tw{2}\type{D}_n$, and otherwise let $\overline{H}:=H$.

\begin{lem}\label{classicalmost}
Let $A$ be an almost simple group with socle $S$ of type $\type{B}_n(q)$ with $n\geq 3$, $\type{C}_n(q)$ with $n\geq 2$, or $\type{D}_n(q)$ or $\tw{2}\type{D}_n(q)$ with $n\geq 4$, where $q$ is a power of a prime $q_0$.  Let   $p\neq q_0$ be an odd prime. Then $A$ is not a counterexample to Theorem \ref{thmA} as long as $me\geq 3$. 
\end{lem}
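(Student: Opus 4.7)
The plan is to locate at least six characters in $B_0(S)$ that are pairwise non-conjugate under $A$, so that $k(B_0(A)) \geq k_A(B_0(S)) \geq 6$ by Lemma \ref{lem:blockabove}. As in the linear/unitary argument of Lemma \ref{typeAmost}, the bulk of these will come from unipotent characters of $S$, which by Lusztig's work are $\aut(S)$-stable apart from certain pairs (and, in type $\type{D}_4$, triples) labeled by degenerate symbols.

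Following the strategy of Lemma \ref{typeAmost}, I would invoke the Fong--Srinivasan description of unipotent blocks of classical groups: the unipotent characters in $B_0(S)$ are parametrized by the symbols of the appropriate type whose $e$-core equals that of the trivial character, and (by \cite[Theorem 3.2]{BMM}) their number equals $|\Irr(W_e)|$, where $W_e$ is the relative Weyl group of a Sylow $e$-torus of the underlying algebraic group. This $W_e$ is a complex reflection group of the form $C_{e'} \wr \mathfrak{S}_m$ in types $\type{B}_n, \type{C}_n$ (or an index-$2$ subgroup thereof in types $\type{D}_n, \tw{2}\type{D}_n$), with $e' \in \{e, 2e\}$ determined by the parity of $d_p(q)$. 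A direct count of $e'$-tuples of partitions of total size $m$ gives $|\Irr(W_e)| \geq 6$ whenever $me \geq 3$ and $m \geq 2$, modulo a small handful of borderline triples $(m,e,e')$ requiring individual verification. When $m = 1$ the Sylow $p$-subgroup of $S$ is cyclic, so Lemma \ref{lem:cyclic} takes care of this case.

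For the borderline pairs where the $\aut(S)$-orbit count on unipotent characters of $B_0(S)$ falls slightly short of six, I would supplement the unipotent characters with non-unipotent members of $B_0(S)$ drawn from Lusztig series $\mathcal{E}(\overline{H}, t)$ attached to well-chosen semisimple $p$-elements $t \in [\overline{H}^\ast, \overline{H}^\ast]$. By an argument analogous to the one used in Lemma \ref{typeAmost}, such series lie in $B_0(\overline{H})$ via Lemma \ref{lem:nondefblocks}, restrict irreducibly to $\Omega$, and are trivial on $\zent{\overline{H}}$, so they descend to additional characters of $B_0(S)$ (using Lemma \ref{lem:quotientblock}) that cannot be $\aut(S)$-conjugate to any unipotent character. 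The main obstacle will be careful $\aut(S)$-orbit bookkeeping in types $\type{D}_n$ and $\tw{2}\type{D}_n$: degenerate-symbol pairs---and the triality triples in $\type{D}_4$---can collapse several unipotent characters into a single $\aut(S)$-orbit, and whether such pairs occur in $B_0(S)$ at all depends delicately on the parities of $e$ and $n$. I expect this combinatorial bookkeeping---balanced against the Lusztig-series contributions in the small cases---to be the most delicate part of the proof.
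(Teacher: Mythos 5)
Your overall strategy --- counting $\aut(S)$-orbits of unipotent characters in $B_0(S)$ via the relative Weyl group / $e$-core combinatorics, with Lusztig-series characters as a supplement --- is the same in spirit as the paper's, but you leave the two steps that constitute the actual content of the lemma unresolved, and both need a specific idea you have not supplied.

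First, the quantitative verification. You assert that a count of tuples of partitions gives $|\Irr(W_e)|\geq 6$ for $me\geq 3$ and $m\geq 2$ ``modulo a small handful of borderline triples requiring individual verification,'' and you plan to fall back on cyclic Sylow subgroups (for $m=1$) and on non-unipotent Lusztig-series characters for the shortfall. None of that is necessary: the relevant count is Olsson's $k(2e,m)$ (the number of $2e$-tuples of partitions of total size $m$), which the paper extracts directly from \cite{CE94} and computes via \cite[Lemma 1]{olsson84}, and one checks by hand that $k(2e,m)\geq 6$ for every pair with $em\geq 3$. In particular the case $m=1$ is covered without appeal to cyclic Sylow subgroups, since $k(2e,1)=2e\geq 6$ once $e\geq 3$. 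Your proposed fall-backs would work, but they signal that you have not actually done the count, and the count is the point.

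Second, you explicitly flag the type $\type{D}_n$ bookkeeping --- degenerate-symbol pairs fused by the graph automorphism, and the triality triples for $\type{D}_4$ --- as ``the most delicate part of the proof'' without saying how to settle it. The paper's resolution is a clean trick you are missing: count unipotent characters in $B_0(\overline{H})$ with $\overline{H}=\GO_{2n}^{\epsilon}(q)$ rather than in $H=\SO_{2n}^{\epsilon}(q)$. A non-degenerate symbol has two extensions to $\overline{H}$, exactly one of which lies in $B_0(\overline{H})$ (the other lies in the block obtained by twisting by the nontrivial linear character of $\overline{H}/H$), while a degenerate pair induces to a single unipotent character of $\overline{H}$; consequently $k(2e,m)$, the number of unipotent characters in $B_0(\overline{H})$, already equals the number of $\overline{H}$-orbits --- hence $\aut(S)$-orbits --- of unipotent characters in $B_0(H)$, and no further ``collapse'' accounting is needed. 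For $\type{D}_4$, the $e$-core/cocore theory of \cite{FS89} shows the two triality triples only fall into $B_0(S)$ when $e\in\{1,2\}$, where $k(2e,m)>11$, so at least six orbits survive even after fusing them. Without these two pieces your sketch does not close.
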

 \begin{proof}
 Keep the notation from above.   Using the main Theorem of \cite{CE94} to argue as in \cite[Discussions before Props 5.4 and 5.5]{malle17},  the number of unipotent characters in $B_0(\overline{H})$  is $k(2e, m)$, where the number $k(2e, m)$ may be computed as in \cite[Lemma 1]{olsson84}.  %(Note to self: $d'$ in Malle notation is $e$ in \cite{FS89} notation and that $C^\circ_\bg{H}([\bL, \bL])^F$ must contain a Sylow $p$-subgroup of $H$, which forces the weight $w=m$ for the principal block.) 

Now, similar to before, results of Lusztig give that the unipotent characters of $H$ restrict irreducibly to $\Omega$ and are trivial on $\zent{\Omega}$ (see \cite[2.3.14 and 2.3.15]{GM20}), and hence can be viewed as irreducible characters of $S$.  Recall that for $S\neq \type{D}_4(q)$ nor $\PSp_4(2^{2a+1})$, the only automorphisms of $S$ that do not fix the unipotent characters occur in the case of type $\type{D}_n$.  In the latter case,  the graph automorphism, induced by the action of $\overline{H}$ on $H$, interchanges the pairs of unipotent characters of $H$ parameterized by so-called degenerate symbols. Further, unipotent characters of $\overline{H}$ are defined as the characters lying above unipotent characters of $H$, and tensoring by the nontrivial linear character of $\overline{H}/H$ yields another block above $B_0(H)$. Then we see in this case that $k(2e,m)$ describes the number of $\overline{H}$-conjugacy classes of unipotent characters in $B_0(H)$ (and hence of $B_0(S)$).  Therefore, if $S\neq \type{D}_4(q)$ nor $\PSp_4(2^{2a+1})$, $B_0(S)$ contains $k(2e,m)$ non-$\aut({S})$-conjugate unipotent characters, and hence $k_A(B_0(S))\geq k(2e,m)$ for any $S\leq A\leq \aut({S})$.  Using \cite[Lemma 1]{olsson84}, we may calculate that $k(2e, m)\geq 6$ unless $em=2$.

For $S=\type{D}_4(q)$, the above is still true, except possibly if the unipotent characters permuted by the exceptional graph automorphism of order $3$ lie in the principal block. Using the theory of $e$-cores and $e$-cocores of \cite{FS89}, we see that this only happens when $e=1$ or $2$, in which case we may again use \cite[Lemma 1]{olsson84} to see that $k(2e,m)> 11$, so there are still at least 6 non-$\aut({S})$-conjugate unipotent characters in $B_0(S)$.  %, which permutes the two characters parameterized by the symbol $2 \choose 2$ with the character parametrized by the symbol, as well as the two characters corresponding to $1\quad 2\choose 1\quad 2$ and the one corresponding to $1\quad 2\quad 4 \choose 0\quad 1\quad 4$.  Using the theory of $e$-cores and $e$-cocores of \cite{FS89}, we see that these characters only lie in the principal block if $e=1$ or $2$. But again using \cite[Lemma 1]{olsson84}, we see there are still at least 6 non-$\aut({S})$-conjugate unipotent characters in $B_0(S)$. 
 \end{proof}

 \begin{prop}\label{prop:classicalsmall}
 Let $A$ be an almost simple group with socle $S$ of type $\type{B}_n(q)$ with $n\geq 3$, $\type{C}_n(q)$ with $n\geq 2$, or $\type{D}_n(q)$ or $\tw{2}\type{D}_n(q)$ with $n\geq 4$, where $q$ is a power of a prime.  Let $p$ be an odd prime not dividing $q$. Then $A$ is not a minimal counterexample to Theorem \ref{thmA}. 
 \end{prop}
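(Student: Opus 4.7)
The plan is to mirror the structure of the proof of Proposition~\ref{prop:typeAsmall}. By Theorem~\ref{thmB} we may assume that either $p\nmid|A:S|$ with $S\cent{A}{P}<A$, or $k(B_0(A/S))=4$. Lemma~\ref{classicalmost} further reduces us to the cases $me\leq 2$. Combined with the constraints $n\geq 3$ for type $\type{B}_n$, $n\geq 2$ for $\type{C}_n$, and $n\geq 4$ for types $\type{D}_n,\tw{2}\type{D}_n$, together with $n=me+r$ and $0\leq r<e$, no type $\type{D}$ case survives, and we are left to treat: (i) $S=\type{B}_3(q)$ with $(e,m,r)=(2,1,1)$; (ii) $S=\PSp_4(q)$ with $(e,m)\in\{(1,2),(2,1)\}$; and (iii) $S=\PSp_6(q)$ with $(e,m,r)=(2,1,1)$.

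In each of these situations, I would first dispose of the cases where a Sylow $p$-subgroup of $A$ is cyclic by invoking Lemma~\ref{lem:cyclic}. Since in cases (i) and (iii), and half of (ii), we have $m=1$, one usually obtains a cyclic Sylow, but one must check additional contributions of $p$-torsion to $|S|$, for example from the factor $q^4+q^2+1$ of $|\type{B}_3(q)|$ when $p=3$. When the Sylow is non-cyclic, I would count characters in $B_0(S)$ directly: the number of unipotent characters in $B_0(S)$ is $k(2e,m)$ in the notation of \cite[Lemma 1]{olsson84}, and these are all $\aut(S)$-invariant, since the exceptional cases $\type{D}_4(q)$ and $\PSp_4(2^{2a+1})$ either do not arise here (by the type constraints) or, in the latter case, can be handled by checking that the unipotent characters permuted by the exceptional graph automorphism do not lie in $B_0(S)$ under our conditions on $e$. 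These unipotent counts can then be supplemented with semisimple characters from Lusztig series $\mathcal{E}(\wt{G},t)$ for various non-central $p$-elements $t\in[\wt{G}^\ast,\wt{G}^\ast]$ via Lemma~\ref{lem:nondefblocks}, \cite[Lemma 2.6]{SFT20}, and Lemma~\ref{lem:restnumber}, exactly as in the final paragraph of the proof of Proposition~\ref{prop:typeAsmall}. If this yields $k_A(B_0(S))\geq 6$ we are done; otherwise one combines $k_A(B_0(S))\geq 3$ with Lemma~\ref{lem:quot4} to settle the subcase $k(B_0(A/S))=4$, and in the remaining subcase (where $p\nmid|A:S|$ and $S\cent{A}{P}<A$) one argues that the nontrivial outer automorphisms of $p'$-order in $A/S$ cannot permute all of the constructed characters to give a total smaller than $6$.

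The main obstacle is the case $\PSp_4(q)$ with $p\mid q-1$: here $k(2,2)$ is small, a Sylow $p$-subgroup is non-cyclic, and the outer automorphism group includes a diagonal action that can pair up non-unipotent characters. I would resolve this by choosing two $\wt{G}^\ast$-classes of semisimple $p$-elements $t_1,t_2\in[\wt{G}^\ast,\wt{G}^\ast]$ with eigenvalue multisets $\{\alpha,\alpha^{-1},\alpha,\alpha^{-1}\}$ and $\{\alpha,\alpha^{-1},1,1\}$ respectively, where $|\alpha|=p$, whose centralizers in $\wt{G}^\ast$ produce distinct counts of irreducible constituents on restriction to $G$ and are not $\aut(S)$-conjugate modulo $\zent{\wt{G}^\ast}$. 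A parallel but delicate analysis is also needed for $\PSp_4(q)$ with $p\mid q+1$ when $(q+1)_p>p$, and for $\PSp_6(q)$ and $\operatorname{P\Omega}_7(q)$ with $p\mid q+1$, where balancing the few unipotent characters available in the principal block against the $\aut(S)$-orbit structure on the non-unipotent characters is the key delicate step.
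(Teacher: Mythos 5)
Your skeleton matches the paper's: reduce to $em=2$ via Lemma~\ref{classicalmost}, note that then $n\in\{2,3\}$ and $S$ has type $\type{B}$ or $\type{C}$, handle the $e=2$ (cyclic Sylow) subcase via Lemmas~\ref{lem:cyclic} and~\ref{lem:quot4}, and settle the $e=1$, $\PSp_4(q)$ case by producing enough characters in $B_0(S)$. But the proposal contains several confusions and leaves the crucial case unexecuted.

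First, the listed ``delicate'' cases $\PSp_6(q)$ and $\operatorname{P\Omega}_7(q)$ with $p\mid q+1$ are spurious: $p\mid q+1$ gives $d_p(q)=2$ and hence $e=1$, so with $n=3$ one gets $m=3$ and $em=3\geq 3$, which is already disposed of by Lemma~\ref{classicalmost}. Second, your concern about the Sylow $p$-subgroup failing to be cyclic when $m=1$, illustrated by the factor $q^4+q^2+1$ of $|\type{B}_3(q)|$ when $p=3$, does not apply: $p=3$ forces $d_3(q)\in\{1,2\}$ and hence $e=1$, which is not an $e=2$ case; and when $e=2$ (i.e.\ $d_p(q)=4$), the only $q^{2i}-1$ with $1\leq i\leq 3$ divisible by $p$ is $i=2$, so the Sylow $p$-subgroup of $S$ (and hence of $A$ when $p\nmid|A:S|$) is indeed cyclic, exactly as the paper asserts.

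The substantive difference is the remaining case $S=\PSp_4(q)$ with $e=1$ (so $m=2$, $n=2$). The paper settles this cleanly by citing White's explicit determinations of blocks and decomposition numbers for $\Sp_4(q)$ \cite{white90b,white92,white95}, which give $k_{\aut(S)}(B_0(S))\geq 6$ directly. You instead propose a more uniform Lusztig-series construction, picking semisimple $p$-elements $t_1,t_2$ with prescribed eigenvalues. This could in principle work — the five unipotent characters in $B_0(S)$ are $\aut(S)$-invariant, so one additional $\aut(S)$-class from a non-unipotent series would suffice — but as written it is only a plan: you do not verify that the proposed $t_i$ lie in $[\wt{G}^\ast,\wt{G}^\ast]$, that the corresponding characters of $\wt{G}$ restrict appropriately to $G$ and descend to $S$, nor that they are not fused by the diagonal and field automorphisms. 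The split into $p\mid q-1$ versus $p\mid q+1$ (and the ramification condition $(q+1)_p>p$) also needs genuine case analysis to be carried out. As it stands, the core case is unresolved, so the proposal is a viable sketch of an alternative route but not a proof.
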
 
 \begin{proof}  
  Note that by Lemma \ref{classicalmost}, we may assume that $em=2$, and hence $n=2$ or $n=3$, so $S$ is type $\type{B}$ or $\type{C}$.
  If $e=2$, then a Sylow $p$-subgroup of $S$, $H$, or $\Omega$ is cyclic.   Further, in this case the number of unipotent characters in $B_0(S)$ is $4$.  Note that for $\PSp_4(2^{2a+1})$, the graph automorphism interchanges two unipotent characters, but in any case we still have $k_A(B_0(S))\geq 3$ for any $S\leq A\leq \aut({S})$.  Then using Lemmas \ref{lem:cyclic} and \ref{lem:quot4} and Theorem \ref{thmB}, we see $A$ is not a minimal counterexample for any $S\leq A\leq \aut({S})$.   If $e=1$, we have $n=2$ and $S=\PSp_{4}(q)$.  In this case, we see from \cite{white90b, white92, white95} that $k_{\aut({S})}(B_0(S))\geq 6$, completing the proof.
\end{proof}

\subsubsection{Classical Groups with $p=2$}

\begin{lem}\label{prime2classicals}
Let $q$ be a power of an odd prime $q_0$ and let $p=2$.  Let $A$ be an almost simple group with simple socle $S=\PSL_n^\epsilon(q)$ with $n\geq 3$, $\PSp_{2n}(q)$ with $n\geq 2$, $\operatorname{P\Omega}_{2n+1}(q)$ with $n\geq 3$ or $\operatorname{P\Omega}_{2n}^\epsilon(q)$ with $n\geq 4$. Then $A$ is not a counterexample to Theorem \ref{thmA}.
\end{lem}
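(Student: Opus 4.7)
The plan is to parallel the approach of Propositions \ref{prop:typeAsmall} and \ref{prop:classicalsmall}, now at the prime $p=2$ with $q$ odd, so that $e := d_2(q) \in \{1,2\}$. A key input is that, by the results of Cabanes--Enguehard together with Enguehard's work at the prime $2$, $B_0$ is the unique unipotent $2$-block of $\wt G$ or $\overline H$ in each of the cases under consideration, so every unipotent character of the ambient classical group descends to a character of $S$ lying in $B_0(S)$.

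First, I would count these unipotent characters in $B_0(S)$. For $S = \PSL_n^\epsilon(q)$ the count is simply the number of partitions of $n$, which exceeds $6$ whenever $n \ge 5$. For the remaining classical types, the corresponding count $k(2e,m)$ from \cite[Lemma 1]{olsson84} (with $e \in \{1,2\}$ and the obvious $m$) exceeds $6$ outside a short list of small-rank exceptions. By Lusztig's description of the $\aut(S)$-action on unipotent characters---the only nontrivial actions relevant here being the graph automorphism of $\type{D}_n$ on degenerate-symbol pairs and the triality automorphism of $\type{D}_4$---these characters fall into at least $6$ distinct $\aut(S)$-orbits in these cases, giving $k(B_0(A)) \ge k_A(B_0(S)) \ge 6$.

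Second, the small-rank exceptions---chiefly $\PSL_n^\epsilon(q)$ with $n \in \{3,4\}$, $\PSp_4(q)$, $\operatorname{P\Omega}_7(q)$, and $\operatorname{P\Omega}_8^\pm(q)$---would be handled case by case. Some of the smallest such groups (for example $\PSp_4(3)$ and $\operatorname{P\Omega}_8^-(3)$) are already disposed of by Proposition \ref{prop:sporadics}. For the rest, I would supplement the unipotent characters with non-unipotent members of $B_0$ drawn from Lusztig series $\mathcal{E}(\wt G, t)$ indexed by nontrivial semisimple $2$-elements $t \in [\wt G^\ast, \wt G^\ast]$: by Lemma \ref{lem:nondefblocks} these lie in $B_0(\wt G)$, and an argument analogous to the one in Proposition \ref{prop:typeAsmall}---combining \cite[Lemma 2.6]{SFT20} with Lemma \ref{lem:restnumber}---shows that they descend to $B_0(S)$ and form $\aut(S)$-orbits disjoint from the unipotent ones. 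Choosing several $t$'s with centralizers of different isomorphism types then produces enough new characters to push the count past $6$.

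The main obstacle I anticipate is the combinatorial bookkeeping in the orthogonal cases, in particular for $\operatorname{P\Omega}_8^\pm(q)$, where the diagonal, graph, and triality automorphisms together with the pairs of unipotent characters attached to degenerate symbols make it delicate to certify the $\aut(S)$-orbit counts. When a direct tally yields only $3 \le k_A(B_0(S)) < 6$, it will still suffice by Theorem \ref{thmB} and Lemma \ref{lem:quot4} to rule out the possibility $k(B_0(A/S)) = 4$; Theorem \ref{smaller4}(d) forces the $2$-part of $|A/S|$ into $\{1,4\}$, and the case $|A/S|_2 = 4$ can be excluded using the structure of $\out(S)$ together with the $4$-divisibility of $k_0(B_0(A))$ from Theorem \ref{divisibility2}(b).
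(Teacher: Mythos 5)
Your overall strategy matches the paper's: for $p=2$, $q$ odd, all unipotent characters of the simply connected group $G$ lie in $B_0(G)$ (the precise reference is \cite[Theorem 21.14]{CE04}) and descend to $B_0(S)$, and one supplements these with characters from Lusztig series $\mathcal{E}(\wt G,t)$ indexed by non-central semisimple $2$-elements to reach six $\aut(S)$-orbits in the small-rank cases. The paper finds that the direct count of unipotent $\aut(S)$-orbits already exceeds $6$ everywhere except $\PSp_4(q)$, $\PSL_3^\epsilon(q)$, and $\PSL_4^\epsilon(q)$: for $\PSp_4(q)$ one instead quotes White's $2$-decomposition results \cite{white90a}; for $\PSL_4^\epsilon(q)$ the five unipotent characters are supplemented by one series indexed by a $2$-element of $[G^\ast,G^\ast]$; and for $\PSL_3^\epsilon(q)$ one runs the argument of the last paragraph of Proposition \ref{prop:typeAsmall} with $|a|=4$. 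In particular, your caution about $\operatorname{P\Omega}_7(q)$ and $\operatorname{P\Omega}_8^\pm(q)$ is unnecessary: for type $\type{B}_3$ the (at least $12$) unipotent characters are all $\aut(S)$-invariant, and for $\type{D}_4$ even after accounting for triality and the degenerate-symbol pairs the orbit count stays comfortably above $6$.

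The genuine gap is in your backup plan. First, once Lemma \ref{lem:quot4} has disposed of the possibility $k(B_0(A/S))=4$ (which it does whenever $k_A(B_0(S))\ge 3$), Theorem \ref{thmB} leaves you in the case $p\nmid |A:S|$ and $S\cent A P<A$; you never address this alternative, and the statement about $|A/S|_2\in\{1,4\}$ is redundant given what Lemma \ref{lem:quot4} already eliminates. Second, the appeal to Theorem \ref{divisibility2}(b) does not yield a contradiction on its own: if $|P|\ge 4$ then $4\mid k_0(B_0(A))$, but $k_0(B_0(A))=4$ together with $k(B_0(A))=5$ is not excluded by this divisibility alone (this is exactly the situation realized by $\PSp_4(3)$ at $p=2$, for example, only there $P$ is of order $2^6$). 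Fortunately the backup plan is never needed once the direct count is done carefully, which is how the paper closes the argument.
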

\begin{proof}
The group $\type{B}_3(3)$ and $\PSU_4(3)$ are dealt with in Proposition \ref{prop:sporadics}, so our conditions on $n$ and $q$ mean that we may assume $S$ does not have an exceptional Schur multiplier.  Then let $\bG$ be a simple algebraic group over $\overline{\mathbb{F}}_q$ of simply connected type such that $G=\bG^F$ satisfies $G/\zent{G}\cong S$. Then by \cite[Theorem 21.14]{CE04}, every unipotent character of $G$ lies in the principal $2$-block $B_0(G)$ of $G$, and the non-unipotent characters in $B_0(G)$ are exactly those characters lying in Lusztig series $\mathcal{E}(G,s)$ with $s$ a $2$-element of $G^\ast$.  Since unipotent characters of $G$ are trivial on the center, we see that $k_A(B_0(S))\geq 6$ just by considering unipotent characters and also taking into consideration \cite[Theorem 2.5]{malle08},  except possibly in the case $\PSp_4(q)$, $\PSL_3^\epsilon(q)$, or $\PSL_4^\epsilon(q)$.  
In the case of $\PSp_4(q)$, the results of \cite{white90a} show that there are at least six characters of distinct degree in the principal block of $\Sp_4(q)$ that are trivial on the center, which forces $k_A(B_0(S))\geq 6$. 
 In the case $S=\PSL_4^\epsilon(q)$, $S$ has five unipotent characters, which are $\aut(S)$-invariant using \cite[Theorem 2.5]{malle08}, and hence the result is obtained by considering a character of $G$ in a series indexed by any $2$-element of $\Oh{q_0'}{G^\ast}=[G^\ast, G^\ast]$, which will be trivial on the center using \cite[Proposition 2.6(iii)]{SFT20}.
 Finally, in the case $S=\PSL_3^\epsilon(q)$, we may argue as in the last paragraph of Proposition \ref{prop:typeAsmall}, but taking  $|a|=4$ in place of $|a|=p$.
\end{proof}

\begin{lem}
Let $q$ be a power of an odd prime and let $p=2$.  Let $A$ be an almost simple group with simple socle $S=\PSL_2(q)$. Then $A$ is not a minimal counterexample to Theorem \ref{thmA}.
\end{lem}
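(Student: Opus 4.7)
The plan is to combine the known Sylow $2$-structure of $S=\PSL_2(q)$ with classical block-theoretic results, together with Theorem \ref{thmB} and Clifford-theoretic orbit counts.

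First, I would recall that a Sylow $2$-subgroup $P$ of $S$ is dihedral $\cyc{D}_{2^s}$ (where $\cyc{D}_4:=\cyc{C}_2\times\cyc{C}_2$), with $2^{s+1}=(q^2-1)_2$. Thus $s=2$ exactly when $q\equiv\pm 3\pmod 8$ and $s\geq 3$ otherwise. By Brauer's classical theorem on $2$-blocks with dihedral defect, together with the Klein four case (which follows from the Alperin--McKay correspondence using $\norm{S}{P}\cong\fA_4$ and abelian defect), one has
\[
k(B_0(S))=\begin{cases} 4 & \text{if } s=2,\\ 2^{s-2}+3 & \text{if } s\geq 3.\end{cases}
\]
Hence $k(B_0(S))=5$ precisely when $s=3$, in which case $P\cong\cyc{D}_8$ belongs to the allowed list of Theorem \ref{thmA}. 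The exceptional isomorphisms $\PSL_2(5)\cong\fA_5$ and $\PSL_2(9)\cong\fA_6$ are excluded by Theorem \ref{thmB} (alternating socle), and $\PSL_2(7)\cong\PSL_3(2)$ by Proposition \ref{prop:sporadics}.

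For $A>S$, I would invoke Theorem \ref{thmB} to split into case (a), where $|A/S|$ is odd and $SC_A(P)<A$, and case (b), where $k(B_0(A/S))=4$. In case (a), since $\out(\cyc{D}_{2^s})\cong\cyc{C}_2$ for $s\geq 3$, the oddness of $|A/S|$ forces $A=SC_A(P)$, contradicting $SC_A(P)<A$; hence only $s=2$ survives. Set $M:=SC_A(P)$. Then $M=SC_M(P)$ with $M/S$ a $2'$-group, so Theorem \ref{isomblocks} gives $k(B_0(M))=k(B_0(S))=4$. Moreover $PC_A(P)\sbs M$, so by Lemma \ref{onlycovering} the block $B_0(A)$ is the unique block of $A$ covering $B_0(M)$. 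As $A/M$ is cyclic of odd order $r>1$, each $\chi\in\irr{B_0(M)}$ extends to its $A$-stabilizer, and a standard orbit count gives $k(B_0(A))=\sum r/s_i$, where the orbit sizes $s_i$ are odd divisors of $r$ summing to $4$. The only feasible partitions are $(1,1,1,1)$ and $(1,3)$, yielding $k(B_0(A))\in\{4r,\,4r/3\}$, which is never $5$.

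For case (b), the strategy is to apply Lemma \ref{lem:quot4} by verifying $k_A(B_0(S))\geq 3$. Since automorphisms of $S$ preserve character degrees, $A$-orbits in $\irr{B_0(S)}$ consist of characters of equal degree. Now $B_0(S)$ always contains the trivial character, the Steinberg character of degree $q$, the two ``exceptional'' characters of common degree $(q+\epsilon)/2$, and (for $s\geq 3$) additional characters of degree $q\pm 1$; this realises at least three distinct degrees in $B_0(S)$ for $q\geq 5$. Hence there are at least three $A$-orbits, and Lemma \ref{lem:quot4} gives $k(B_0(A))\geq 6>5$. The main obstacle will be the orbit analysis in case (a), where one must verify carefully that the sum $\sum r/s_i$ over every admissible partition of $4$ into odd parts is distinct from $5$, and check that the reduction through $M$ via Theorem \ref{isomblocks} and Lemma \ref{onlycovering} indeed identifies $\irr{B_0(A)}$ with the induced characters from extensions of $\irr{B_0(M)}$. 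A secondary subtlety in case (b) is ruling out a single $A$-orbit that encompasses all nontrivial members of $\irr{B_0(S)}$; this is precluded by the $\geq 3$ distinct degrees and the $A$-invariance of the trivial and Steinberg characters.
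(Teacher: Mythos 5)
Your proposal is correct and reaches the same conclusion, but the route through case (a) is genuinely different from the paper's. The paper handles $[A:S]$ odd by applying the known formula $k(B_0(A))=2^{n-2}+3$ for a dihedral defect group of order $2^n\geq 8$ (citing Sambale's book directly on $A$), and disposes of the Klein four case by invoking Carter--Fong to show that field automorphisms centralize $P$ modulo $\zent{G}$, so $S\cent{A}{P}=A$, contradicting the hypothesis from Theorem~\ref{thmB}. You instead argue structurally: for $|P|\geq 8$ the relevant fact is that $\Aut(\cyc{D}_{2^s})$ is a $2$-group, so a Frattini argument gives $N_A(P)=C_A(P)N_S(P)$ and hence $A=S\cent{A}{P}$, killing case (a); and for $|P|=4$ you pass through $M=S\cent{A}{P}$, transfer via Theorem~\ref{isomblocks} to get $k(B_0(M))=4$, and use Lemma~\ref{onlycovering} plus a Clifford orbit count over the cyclic odd quotient $A/M$ to get $k(B_0(A))\in\{4r,4r/3\}\neq 5$. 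Your route replaces Carter--Fong and Sambale's $k(B)$-formula for $A$ with more elementary local arguments and the Alperin--Dade correspondence; it buys independence from the dihedral defect block classification at the group $A$ and from the explicit Sylow construction in Carter--Fong, at the cost of a somewhat longer orbit analysis. In case (b) your argument (distinct character degrees $1$, $q$, $(q+\eta)/2$ yielding $k_A(B_0(S))\geq 3$) is essentially the same as the paper's (which produces the third orbit from a nonunipotent semisimple character $\chi_t$).

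Two small remarks. First, $\Out(\cyc{D}_{2^s})\cong \cyc{C}_2$ is false for $s\geq 4$ (e.g.\ $\Out(\cyc{D}_{16})\cong \cyc{C}_2\times \cyc{C}_2$); what your Frattini argument actually needs is that $\Aut(\cyc{D}_{2^s})$ is a $2$-group for $s\geq 3$, which is true, so the conclusion stands with that correction. Second, you should note explicitly that $A/M$ is cyclic because $A/S$ is a quotient of the odd part of $\Out(\PSL_2(q))\cong \cyc{C}_2\times \cyc{C}_f$, hence cyclic, and that the orbit sizes $s_i$ divide $|A/M|=r$ which is odd, so the only partitions of $4$ into admissible parts are $1+1+1+1$ and (when $3\mid r$) $1+3$; you state this but the justification deserves a line.
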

\begin{proof}
In this case, $B_0(S)$ contains two unipotent characters.  Taking $t\in [\wt{G}^\ast, \wt{G}^\ast]$ to have eigenvalues $\{a,a^{-1}\}$ with $|a|=4$, the character $\chi_t$ restricts to the sum of two nonunipotent irreducible characters of $G$ trivial on the center.  We therefore have $k_A(B_0(S))\geq 3$, so we may assume $[A:S]$ is odd by applying Lemma \ref{lem:quot4} and Theorem \ref{thmB}. Then a Sylow $2$-subgroup of $A$ is Dihedral or Klein-four.  If $|P|=2^n\geq 8$, then $k(B_0(A))=2^{n-2}+3$ by \cite[Theorem 8.1]{sambalebook}, which is larger than $5$ for $n>3$.  If $|P|=8$, then $P$ is ${\sf D}_8$ and $A$ is not a counterexample.   % Then in particular, the two constituents of  $(\chi_t)_G$  are not $A$-conjugate, forcing $k_A(B_0(S))\geq 4$. If $\frac{(q^2-1)_2}{2}=|P|>8$, so $32\mid (q^2-1)$, we also have $t', t''\in [\wt{G}^\ast, \wt{G}^\ast]$ with eigenvalues $\{a',{a'}^{-1}\}$ with $|a'|=8$ and $\{a'',{a''}^{-1}\}$ with $|a''|=16$,  yielding 2 more characters $\chi_{t'}$ and $\chi_{t''}$ that restrict irreducibly to $G$, are trivial on $\zent{\wt{G}}$, and cannot be $A$-conjugate to the others, and hence $k_A(B_0(S))\geq 6$.  
%Hence we may assume $|P|\leq 8$, where $P$ is a Sylow $2$-subgroup of $S$.  If $|P|=8$, then $P$ is ${\sf D}_8$ and $A$ is not a counterexample.  
If $|P|=4$, then $(q^2-1)_2=8$, and every semisimple $2$-element in $[\wt{G}^\ast, \wt{G}^\ast]$ is $\wt{G}^\ast$-conjugate to $t$ above. This means that the only characters of $B_0(S)$ are the four discussed at the beginning of the proof, which are $A$-invariant. Further, we have $P={\sf C}_2\times {\sf C}_2$. Since $2\nmid [A:S]$, we know $A/S$ is cyclic generated by field automorphisms. We then see, using the construction in \cite{carterfong}, that a generating field automorphism centralizes the Sylow $2$-subgroup of $G$, modulo $\zent{G}$.  That is, a generator of $A/S$ centralizes $P$, contradicting the assumption from Theorem \ref{thmB} that $S\cent{A}{P}\neq A$.
%%%%DETAILS IF NEEDED:
% If $q\equiv 1\pmod 4$, $A$ must then centralize $P$, since by \cite{carterfong}, a Sylow $2$-subgroup $\overline{P}$ of $G$ is generated by $\mathrm{diag}(\epsilon_4, \epsilon_4^{-1})$, where $|\epsilon_4|=4$, and $Y:=\begin{bmatrix} 0 & 1\\ -1 & 0\end{bmatrix}$, which is stabilized by field automorphisms modulo $\zent{G}$.  This contradicts the assumption $S\cent{A}{P}\neq A$.  If $q\equiv 3\pmod 4$, we have by \cite{carterfong} that $\overline{P}$ is generated by $Y$ and $X^2$, where $X:=\begin{bmatrix} 0 & 1\\ 1 & \epsilon_8+\epsilon_8^q\end{bmatrix}$ with $|\epsilon_8|=8$.  Note that $q$ is non-square, and hence $q\equiv q_0\pmod 8$, where $q$ is a power of the prime $q_0$.  Hence we see that again the field automorphisms preserve the generators of $\overline{P}$ modulo $\zent{G}$.
\end{proof}

\noindent {\bf Acknowledgement.} First of all, we would like to thank Gabriel Navarro for bringing this topic to our attention. We also thank Benjamin Sambale for comments on a previous version. Finally, we are obliged to the anonymous referees for accurate corrections on our article.

\end{document}